\font \eightrm=cmr8
\newcommand{\nc}{\newcommand}
\nc\smsc{0.8}%small scale
\def\soprec{\,\joinrel{\ocircle\hskip -6.7pt \prec}\,}
\nc\delete[1]{}
\nc{\mlabel}[1]{\label{#1}}  % Use this to suppress names
\nc{\mcite}[1]{\cite{#1}}  % Use this to suppress names
\nc{\mref}[1]{\ref{#1}}  % Use this to suppress names
\nc{\mbibitem}[1]{\bibitem{#1}} % Use this to show number name
\nc{\mlabel}[1]{\label{#1}  % Use the next two lines to show names
{\hfill \hspace{1cm}{\small\tt{{\ }\hfill(#1)}}}}
\nc{\mcite}[1]{\cite{#1}{\small{\tt{{\ }(#1)}}}}  % Use this lines to show names
\nc{\mref}[1]{\ref{#1}{{\tt{{\ }(#1)}}}}  % Use this lines to show names
\nc{\mbibitem}[1]{\bibitem[\bf #1]{#1}} % Use this to show name
\nc{\mop}[1]{\mathop{\hbox {\rm #1} }}
\nc{\smop}[1]{\mathop{\hbox {\eightrm #1} }}
\nc{\mopl}[1]{\mathop{\hbox {\rm #1} }\limits}
\nc{\smopl}[1]{\mathop{\hbox {\eightrm #1} }\limits}
\def \restr#1{\mathstrut_{\textstyle |}\raise-8pt\hbox{$\scriptstyle #1$}}
\def \srestr#1{\mathstrut_{\scriptstyle |}\hbox to
  -1.5pt{}\raise-4pt\hbox{$\scriptscriptstyle #1$}}
\nc{\wt}{\widetilde}
\nc{\wh}{\widehat}
\newtheorem{theorem}{Theorem}[section]
\newtheorem{definition}{Definition}[section]
\newtheorem{corollary}{Corollary}[section]
\newtheorem{proposition}{Proposition}[section]
\newtheorem{remark}{Remark}[section]
\newtheorem{example}{Example}[section]
\numberwithin{equation}{section}
\newcommand\alphlist{a,b,c,d,e,f,g,h,i,j,k,l,m,n,o,p,q,r,s,t,u,v,w,x,y,z}
\newcommand\Alphlist{A,B,C,D,E,F,G,H,I,J,K,L,M,N,O,P,Q,R,S,T,U,V,W,X,Y,Z}
\newcommand\getcmds[3]{\expandafter\newcommand\csname #2#1\endcsname{#3{#1}}}
\alphlist\do{\expandafter\getcmds\expandafter{\x}{frak}{\mathfrak}}% \fraka,\frakb,...
\Alphlist\do{\expandafter\getcmds\expandafter{\x}{frak}{\mathfrak}}% \frakA,\frakB,...
\nc{\bfk}{{\bf k}}
\nc{\sha}{\shuffle}
\nc{\id}{\mathrm{id}}
\nc{\Id}{\mathrm{Id}}
\nc{\lbar}[1]{\overline{#1}}
\nc{\ot}{\otimes}
\nc{\dep}{\mathrm{dep}}
\nc{\ver}{\mathrm{ver}}
\nc{\tred}[1]{\textcolor{red}{#1}} \nc{\tgreen}[1]{\textcolor{green}{#1}}
\nc{\tblue}[1]{\textcolor{blue}{#1}} \nc{\tpurple}[1]{\textcolor{purple}{#1}}
\nc{\tcyan}[1]{\textcolor{cyan}{#1}} % qian lan se
\nc{\tblk}[1]{\textcolor{black}{#1}}
\nc{\li}[1]{\tpurple{\underline{Li:}#1 }}
\nc{\liadd}[1]{\tpurple{#1}}
\nc{\xing}[1]{\tblue{\underline{Xing:}#1 }}
\nc{\yuan}[1]{\tred{\underline{Yuan:}#1 }}
\nc{\markus}[1]{\tred{\underline{Markus:} #1}}
\nc{\dominique}[1]{\tpurple{\underline{Dominique: }#1 }}
\long\def\ignore#1{}
\tikzset{
baseon/.style={baseline={($(#1)+(0,-0.58ex)$)}},
baseon/.default=current bounding box.center,
every picture/.style=baseon,
lst/.style={},
dst/.style={circle,inner sep=1pt,outer sep=0pt,fill,draw,dst2},
dst2/.style={fill=white},
ddst/.style={diamond,draw,inner sep=1pt},
eest/.style={ellipse,draw,inner sep=1pt,minimum size=2ex},
}
\def\zzz#1`#2...#3`#4...#5`#6@{%
--++(#1)%coordinate(#2)
node[dst,label={#5:$#6$},name=#2]{}
node[midway,\hbox{Aut}o,#3]{$#4$}
}
\def\ddd#1`#2`#3@{+(#1)node[ddst,name=#2]{$#3$}}
\def\eee#1`#2`#3@{+(#1)node[eest,name=#2]{$#3$}}
\def\xxx#1`#2@{node[midway,\hbox{Aut}o,inner sep=1pt,#1]{$#2$}}
\def\pp#1`#2`#3@{node[dst,label={#2:$#3$},pos=#1]{}}
\def\oo#1`#2`#3@{\path (o) node[dst,label={#2:$#3$},name=o,#1]{};}
\def\eoo#1`#2@{\node[eest,name=o,#1] at (o) {$#2$};}
\newif\ifshowjdq
\newcommand\setXXclip[3]{%
\def\XXheight{#1}\def\XXdepth{#2}\def\XXwidth{#3}}
\newcommand\simra{\mathrel{\mathpalette\@verra\sim}}
\def\@verra#1#2{\lower.5\p@\vbox{\lineskiplimit\maxdimen \lineskip-.5\p@
\ialign{$\m@th#1\hfil##\hfil$\crcr#2\crcr\rightarrow\crcr}}}
\nc{\dnx}{\Delta_n A} \nc{\dx}{\Delta A} \nc{\dgp}{{\rm deg_{P}}}
\nc{\dgt}{{\rm deg_{T}}} \nc{\dg}{{\rm deg}} \nc{\ida}{ID($A$)} \nc{\tu}{\tilde{u}} \nc{\tv}{\tilde{v}}
\nc{\nr}{\calr_n} \nc{\nz}{\calz_n} \nc{\fun}{\cala_{n,d}}
 \nc{\fbase}{\calb} \nc{\LF}{\mathrm{RF}} \nc{\FFA}{\mathrm{LF}} \nc{\irr}{\mathrm{Irr}}
 \nc{\result}{\bfk\mathrm{Irr}(S_n)}  \nc{\I}{I_{\mathrm{ID},n}^0}
 \nc{\nrs}{\calr_n^\star} \nc{\ii}{\mathrm{I}} \nc{\iii}{\mathrm{II}}
\nc{\intl}{{\rm int}}\nc{\ws}[1]{{#1}}\nc{\deleted}[1]{\delete{#1}}\nc{\plas}{placements\xspace}
\nc{\bim}[1]{#1}  \nc{\shaop}{\sha_{\Omega}^{+}}  \nc{\shao}{\sha_{\Omega}}
\nc{\bbim}[2]{#1 #2} \nc{\bbbim}[2]{#1,\, #2} \nc{\RBF}{{\rm RBF}}
\nc{\frb}{F_{\RB}} \nc{\shaf}{\ssha_{\tiny{\Omega}}} \nc{\sham}{\diamond_{\tiny{\Omega}}}
\nc{\lf}{\lfloor} \nc{\rf}{\rfloor} \nc{\shan}{\ssha_{\lambda}}
\nc{\rlex}{{\rm {lex}}} \nc{\bb}{\Box} \nc{\ra}{\rightarrow}
\nc{\e}{{\rm {e}}}
\nc{\DDF}{\mathrm{DD}(X,\,\Omega)}\nc{\DTF}{\mathrm{DT}(X,\,\Omega)} \nc{\DT}{\mathrm{DT}'(\Omega,\,V)}
\nc{\bra}{\mathrm{bra}} \nc{\bre}{\mathrm{bre}}
\nc{\dec}{\mathrm{dec}} \nc{\diamondw}{\diamond_{w}}
\nc{\type}{\mathrm{type}}
\nc\calt{\cal{T}(X,\,\Omega)} \nc\caltn{\cal{T}_n(X,\,\Omega)}
\nc\calta{\cal{T}_0(X,\,\Omega)}
\nc\caltb{\cal{T}_1(X,\,\Omega)}
\nc\caltc{\cal{T}_2(X,\,\Omega)}
\nc\caltd{\cal{T}_3(X,\,\Omega)}
\nc\caltm{\cal{T}_m(X,\,\Omega)}
\nc\caltx{\cal{T}(X)}
\nc\calf{\cal{F}(X,\,\Omega)}
\nc\fram{\frak{M}(\Omega,\, X)}
\nc\shaw{\sha^{NC}_w(\Omega,\, X)}
\nc\dw{\diamond_w} \nc\dl{\diamond_\ell}
\nc\shal{\sha^{NC}_\ell(X,\, \Omega)} \nc\shav{\sha^{NC}_w(\Omega,\, V)} \nc\shat{\sha^{NC,1}_w(\Omega,\, T^{+}(V))}
\nc{\cfo}{\cal{F}(X,\,\Omega)}
\nc{\sh}{\rm{Sh}}
\nc{\lar}{\varinjlim}
\def\cxo#1#2;{\cal{#1}#2\XO}
\nc\lrf[2]{B_{#2}^+(#1)}
\nc{\fd}{\mathrm{\text{typed angularly decorated planar rooted trees}}}
\nc{\rb}{\mathrm{RBFWs}} \nc{\dfw}{\mathrm{DFW{(X)}}} \nc{\tfw}{\mathrm{TFW{(X)}}}
\nc{\tfv}{\mathrm{TFW{(V)}}}
\def\Ve#1,#2,#3;{\vee_{#1,\,(#2,\,#3)}}
\def\bigv#1;#2;#3;{\bigvee\nolimits_{#1}^{#2;\,#3}}
\nc\rjt[2]{\mathrel{\mathop{\longrightarrow}\limits^{#1\hfill}_{\hfill#2}}}
\nc{\pl}{\cal{PLF}}
\nc{\tr}{\cal{RTF}}
\nc{\im}{\mathrm{Im}}
\nc{\ff}{\cal{F}_\Omega}
\nc{\tm}{T_\Omega}
\nc{\calp}{\cal{P}}
\nc\dd{\@ifnextchar'{\ddA}{\ddB}}
\def\ddA'#1;{\rhd'_{#1\,}}
\def\ddB#1;{\rhd_{#1\,}}
\nc{\pbt}{\mathrm{PBT}}
\nc{\ad}{\mathrm{ad}}
\begin{document}

\title[Twisted pre-Lie algebras of finite topological spaces]{Twisted pre-Lie algebras of finite topological spaces}
\thispagestyle{empty}
\author{Mohamed Ayadi}
\address{Laboratoire de Math\'ematiques Blaise Pascal,
CNRS--Universit\'e Clermont-Auvergne,
3 place Vasar\'ely, CS 60026,
F63178 Aubi\`ere, France, and University of Sfax, Faculty of Sciences of Sfax,
LAMHA, route de Soukra,
3038 Sfax, Tunisia.}
\email{mohamed.ayadi@etu.uca.fr}

\tikzset{
			stdNode/.style={rounded corners, draw, align=right},
			greenRed/.style={stdNode, top color=green, bottom color=red},
			blueRed/.style={stdNode, top color=blue, bottom color=red}
		} 
		
%\maketitle
	
\begin{abstract}
		%%%%%%
In this paper, we first study the species of finite topological spaces recently considered by F. Fauvet, L. Foissy, and D. Manchon.
Then, we construct a twisted pre-Lie structure on the species of connected finite topological spaces.
The underlying pre-Lie structure defines a coproduct on the species of finite topological spaces different from those already defined by the Authors above. In the end, we illustrate the link between the Grossman-Larson product and the proposed coproduct.
	\end{abstract}
\keywords{Finite topological spaces, Species, Bimonoids, Bialgebras, Hopf algebras.}
\subjclass[2010]{16T05, 16T10, 16T15, 16T30, 06A11.}
\maketitle
\tableofcontents
%%%%
%%%%%
\section{Introduction}
A finite topological space is a finite set E endowed with a preorder $\leq$. The study of finite topological spaces was initiated by Alexandroff in 1937 \cite{acg.11}, and revived at several periods since then, using the following well-known bijection \cite{acg.60, acg...14}.
Any topology $\mathcal{T}$ on $X$ defines a quasi-order (i.e. a reflexive transitive relation) denoted by $\leq_{\mathcal{T}}$ on $X$:
	\begin{equation}
	x\leq_{\mathcal{T}}y\Longleftrightarrow \hbox{ any open subset containing $x$ also contains $y$}.
	\end{equation}
	Conversely, any quasi-order $\leq$ on $X$ defines a topology $\mathcal{T}_{\leq}$ given by its upper ideals, i.e. subsets $Y\subset X$ such that ($y\in Y$ and $y\leq z$) $\implies z\in Y$. Both operations are inverse to each other:
	\begin{equation}
	\leq_{\mathcal{T}_{\leq}}= \leq,\hspace*{2cm} \mathcal{T}_{\leq_{\mathcal{T}}}=\mathcal{T}.
	\end{equation}
	Hence there is a natural bijection between topologies and quasi-orders on
	a finite set $X$.
	Any quasi-order (hence any topology $\mathcal{T}$ ) on $X$ gives rise to an equivalence relation:
	\begin{equation}
	x \sim_{\mathcal{T}}y\Longleftrightarrow \left( x\leq_{\mathcal{T}}y \hbox{ and } y\leq_{\mathcal{T}}x \right).
	\end{equation}
	 Let $\mathcal{T}$ and $\mathcal{T}'$ be two topologies on a finite set $X$. We say that $\mathcal{T}'$ is finer than $\mathcal{T}$, and we write $\mathcal{T}'\prec \mathcal{T}$, when any open subset for $\mathcal{T}$ is an open subset for $\mathcal{T}'$. This is equivalent to the fact that for any $x,y\in X$, $x\le_{\mathcal{T}'}y\Rightarrow x\le_{\mathcal{T}}y$.\\
	
	The \textsl{quotient} $\mathcal{T}/\mathcal{T}'$ of two topologies $\mathcal{T}$ and $\mathcal{T}'$ with $\mathcal{T}'\prec \mathcal{T}$ is defined as follows (\cite[Paragraph 2.2]{acg6}): The associated quasi-order $\le_{\mathcal{T}/\mathcal{T}'}$ is the transitive closure of the relation $\mathcal{R}$ defined by:
	\begin{equation}
	x\mathcal{R} y\Longleftrightarrow (x\leq_{\mathcal{T}} y\hbox{ or }y\leq_{\mathcal{T}'} x).
	\end{equation}
	\\
More on finite topological spaces can be found in \cite{acg1, acg4, acg8, acg10, acg..12}.\\
\\
	Recall that a linear (tensor) species is a contravariant functor from the category
	of finite sets $\mathbf{Fin}$ with bijections into the category $\mathbf{Vect}$ of vector spaces (on some
	field $\mathbf{k}$). The tensor product of two species  $\mathbb{E}$ and $\mathbb{F}$ is given by
	\begin{equation}
	(\mathbb{E}\otimes \mathbb{F})_X=\bigoplus_{Y\sqcup Z= X}\mathbb{E}_{Y}\otimes\mathbb{F}_{Z},
	\end{equation}
	where the notation $\sqcup $ stands for disjoint union.
	The unit of the tensor product denoted by $\mathbf{1}$ is defined by $ \mathbf{1} _ {\varnothing} = \mathbf{k} $ and $ \mathbf{1}_X = \{0\} $, if $X\ne \varnothing$.\\
	We write $x\in \mathbb{E}$ if there exists a finite set $X$ such that $x\in \mathbb{E}_X$.\\
\\
A twisted algebra \cite{acg...3} is an algebra in the linear symmetric monoidal category of linear species. See \cite{acg...2, acg.40, acg...1} for further details on and references to Joyal’s theory of twisted algebras. Concretely, a twisted algebra is a linear species $\mathbb{E}$ provided with a product map (which
is a map of linear species: $\mathbb{E} \otimes \mathbb{E} \to \mathbb{E}$). Associative algebras, commutative algebras,
Lie algebras, pre-Lie algebras and so on, are defined accordingly.
\\
\\
The species $\mathbb{T}$ of finite topological spaces is defined as follows: For any finite set $X$, $\mathbb{T}_X$  is
	the vector space freely generated by the topologies on $X$. For any bijection
	$\varphi : X \longrightarrow  X^{\prime } $, the isomorphism $\mathbb{T}_{\varphi } : \mathbb{T}_{ X^{\prime}} \longrightarrow \mathbb{T}_X$ is defined by the obvious
	relabelling:
	$$\mathbb{T}_{\varphi }(\mathcal{T})=\{ \varphi^{-1}(Y), Y\in  \mathcal{T}  \},$$
	for any topology $\mathcal{T}$ on $X^{\prime }$.\\
	\\
		A unital associative algebra (\cite[Paragraph 2.3]{acg6}) on
	the species of finite topologies is defined as follows: for any pair $X_1, X_2$ of
	finite sets we introduce
	\begin{align*}
	m:\mathbb{T}_{X_1}\otimes \mathbb{T}_{X_2} \longrightarrow \mathbb{T}_{X_1\sqcup X_2} \\
	\mathcal{T}_1\otimes \mathcal{T}_2\longmapsto \mathcal{T}_1\mathcal{T}_2,
	\end{align*}
	where $\mathcal{T}_1\mathcal{T}_2$ is the disjoint union topology characterised by $Y\in \mathcal{T}_1\mathcal{T}_2$ if and only if $Y\cap X_1\in \mathcal{T}_1$ and
	$Y\cap X_2\in \mathcal{T}_2$. The unit is given by the unique topology on the empty set.\\
	\\
		For any topology $\mathcal{T}$ on a finite set $X$ and for any subset $Y \subset X$, we
	denote by $\mathcal{T}_{|Y}$ the restriction of $\mathcal{T}$ to $Y$. It is defined by:
	$$\mathcal{T}_{|Y}= \left\lbrace Z\cap Y, Z\in \mathcal{T} \right\rbrace. $$
	The external coproduct $\Delta$ on $\mathbb{T}$ is defined as follows:
	\begin{eqnarray*}
		\Delta:\mathbb{T}_X&\longrightarrow& (\mathbb{T}\otimes \mathbb{T})_X=\bigoplus_{Y\sqcup Z= X}\mathbb{T}_{Y}\otimes\mathbb{T}_{Z}\\
		\mathcal{T}&\longmapsto& \sum_{Y\in  \mathcal{T}}\mathcal{T}_{|X\backslash Y}\otimes \mathcal{T}_{|Y}.
	\end{eqnarray*}
	The species $\mathbb{T}$ is this way endowed with a twisted bialgebra structure in \cite{acg6}. \\

		\noindent Now consider the graded vector space:
	\begin{equation}
	\mathcal{H}=\overline{\mathcal{K}}(\mathbb{T})=\bigoplus \limits_{\underset{}{n\geq 0}} \mathcal{H}_n
	\end{equation}
	where $\mathcal{H}_0 =\mathbf{k}.1$, and where $\mathcal{H}_n$ is the linear span of topologies on $\left\{ 1, . . . , n \right\}$
	when $n \geq 1$, modulo the action of the symmetric group $S_n$. The vector space $\mathcal{H}$ can be seen as the quotient of the species $\mathbb{T}$ by the "forget the labels" equivalence relation: $\mathcal{T} \sim \mathcal{T}^{\prime}$  if $\mathcal{T}$ $\left(\text{resp.} \mathcal{T}^{\prime} \right)$ is a topology on a finite
	set $X$ (resp. $X'$), such that there is a bijection from $X$ onto $X^{\prime}$ which is a homeomorphism with respect to both topologies. The functor $\overline{\mathcal{K}}$ from linear species to graded vector spaces thus obtained is intensively studied in (\cite[chapter 15]{acg10}) under the name "bosonic Fock functor". The twisted Hopf algebra structure on $\mathbb{T}$ \cite{acg6}
	naturally leads to the following:\\
	\\
	 $(\mathcal{H},m,\Delta )$ is a commutative connected Hopf algebra, graded by the number of elements.\\

   L. Foissy, C. Malvenuto and F. Patras in \cite[section 6]{acg...00} were the first to prove that the finite topological spaces can be organized in a graded commutative Hopf algebra. The latter can be recovered by applying the $\overline{\mathcal{K}}$ functor to the twisted Hopf algebra structure on $\mathbb{T}$ described in \cite{acg6}. The coproduct $\Delta$ defined therein is however not built from a pre-Lie structure. We define in the present work two twisted pre-Lie structures $\searrow$ and $\nearrow$ on the species of connected finite topological spaces, giving rise to two more coproducts $\Delta_{\searrow}$ and $\Delta_{\nearrow}$, hence two more twisted Hopf algebra structures. We expect that this will contribute  to a better understanding of the finite topological spaces considered as a whole.\\

   In section \ref{enveloping}, we recall the method of D. Guin and J.-M. Oudom  \cite{acg9} to describe the enveloping
algebra of a pre-Lie algebra, and we adapt it to the twisted context, following indications in \cite{acg.20}.\\

   In Section \ref{The enveloping algebra of topologie} of this paper, we define the enveloping algebra of the grafting twisted pre-Lie algebra of connected finite topological spaces, as well as its enveloping algebra using the Guin-Oudom method. Denoting by $\mathbb{V}$ the species of connected finite topological spaces, we consider the Hopf symmetric algebra
$\mathcal{H}'=S(\mathbb{V})$ of the pre-Lie twisted algebra $(\mathbb{V}, \searrow)$, equipped with its usual unshuffling coproduct $\Delta_{unsh}$
and a product $\star$ defined on $\mathbb{T}$ by: For any pair $X_1, X_2$ of finite sets
\begin{eqnarray*}
		\star:\mathbb{T}_{X_1}\otimes\mathbb{T}_{X_2}&\longrightarrow& \mathbb{T}_{X_1\sqcup X_2}\\
		(\mathcal{T}_1, \mathcal{T}_2)&\longmapsto& \sum_{(\mathcal{T}_1)}\mathcal{T}^{(1)}_1(\mathcal{T}^{(2)}_1\searrow \mathcal{T}_2).
	\end{eqnarray*}
	
      In section \ref{bialgebras} we prove that there exists a twisted bialgebra structure on $\mathbb{T}$, where the external coproduct is defined by 
      \begin{eqnarray*}
		\Delta_{\searrow}:\mathbb{T}_X&\longrightarrow& (\mathbb{T}\otimes \mathbb{T})_X=\bigoplus_{Y\sqcup Z= X}\mathbb{T}_{Y}\otimes\mathbb{T}_{Z}\\
		\mathcal{T}&\longmapsto& \sum_{Y \overline{\in}  \mathcal{T}} \mathcal{T}_{|Y} \otimes \mathcal{T}_{|X\backslash Y}
	\end{eqnarray*}
	where $Y \overline{\in}  \mathcal{T}$ stands for
\begin{itemize}
    \item $Y\in \mathcal{T}$,
    \item $\mathcal{T}_{|Y}=\mathcal{T}_1...\mathcal{T}_n$, such that for all $i\in \{1,..., n\}, \mathcal{T}_i$ connected and \big( $\hbox{min}\mathcal{T}_i=(\hbox{min}\mathcal{T})\cap \mathcal{T}_i$, or there is a single common ancestor $x_i \in \overline{X\backslash Y}$ to $\hbox{min}\mathcal{T}_i$ \big), where $\overline{X\backslash Y}=(X\backslash Y)/\sim_{\mathcal{T}_{|X\backslash Y}}$.
\end{itemize}
We moreover give a relation between the two structures $\Delta_{\searrow}$ and $\star$.\\

     Finally we define in section \ref{Relation between} a new pre-Lie law $\nearrow$ on the species of connected finite topological spaces by: 
  For all $\mathcal{T}=(X, \leq_{\mathcal{T}})$ and $\mathcal{S}=(Y, \leq_{\mathcal{S}})$ be two finite topological spaces,
 \begin{align*}
  \mathcal{T}\nearrow \mathcal{S}:=j\big( j(\mathcal{T})\searrow j(\mathcal{S}) \big),  
\end{align*}
 where $j$ is the involution which transforms $\le$ into $\ge$. This law $\nearrow$ gives rise to a coproduct denoted $\Delta_{\nearrow}$ defined by $ \Delta_{\nearrow}= (j \otimes j)\Delta_{\searrow}\circ j $.\\
  For any finite set $A$ and for any pair of parts $A_1$, $A_2$ of $A$ with $A_1\cap A_2=\varnothing$, we define
     $$\Psi_{A_1,\, A_2}:\mathbb T_A\to\mathbb T_A,$$
     as follows: for any topology $\mathcal{T}\in \mathbb{T}_A$, the topology $\Psi_{A_1,\,  A_2}(\mathcal T)$ is associated with the following pre-order $\le$ defined by:
     \begin{itemize}
    \item If $a\in A_1$, and $b\in A_2$ then $a$ and $b$ are incomparable,
    \item If not, we have: $a\le b$ if and only if $a\le_{\mathcal T} b$.
    \end{itemize} 
  
  In this section, we provide a relation between both pre-Lie structures, by proving that the following diagram commutes.
$$
\xymatrix{
	\mathbb{V}_X \otimes \mathbb{V}_Y \otimes \mathbb{V}_Z \ar[rr]^{id\otimes \searrow_u} \ar[d]_{\nearrow^s\otimes id} && \mathbb{V}_X \otimes \mathbb{V}_{Y\sqcup Z} \ar[dd]^{\nearrow^s}\\
			\mathbb{V}_{X\sqcup Y} \otimes \mathbb{V}_Z \ar[d]_{\searrow_u } && \\
			\mathbb{V}_{X\sqcup Y\sqcup Z} \ar[rr]_{\Psi_{X,\, Z}} && \mathbb{V}_{X\sqcup Y\sqcup Z}
}
$$
%%%%%%%%%%%%%
\section{The enveloping algebra of pre-Lie algebras and twisted pre-Lie algebras}\label{enveloping}
	In this section, we recall the method of D. Guin and J.-M. Oudom  \cite{acg9} to describe the enveloping
algebra of a pre-Lie algebra. We also recall how T. Schedler in \cite{acg.20} generalizes this method to twisted pre-Lie algebras.
\begin{definition}
    A Lie algebra over a field $\mathbf{k}$ is a vector space $V$ endowed with a bilinear bracket $[., .]$ satisfying:\\
    (1) The antisymmetry:
    $$[x,y]=-[y,x], \forall x, y \in V.$$
    (2) The Jacobi identity:
    $$[x,[y,z]]+[y,[z,x]]+[z,[x,y]]=0, \forall x, y, z \in V.$$
\end{definition}
\begin{definition}
    \cite{acg4, acg3} A left pre-Lie algebra over a field $\mathbf{k}$ is a $\mathbf{k}$-vector space $A$ with a binary composition $\rhd$ that satisfies the left pre-Lie identity:
    \begin{equation*}
        (x\rhd y)\rhd z-x\rhd (y\rhd z)=(y\rhd x)\rhd z-y\rhd (x\rhd z),
    \end{equation*}
    for all $x, y, z\in A$. The left pre-Lie identity rewrites
as:
\begin{equation}
L_{[x,y]}=[L_x, L_y],
\end{equation}
where $L_x:A\longrightarrow A$ is defined by $L_xy=x\rhd y$, and where the bracket on the left-hand side is
defined by $[x,y]=x\rhd y-y\rhd x$. As a consequence this bracket satisfies the Jacobi identity.
\end{definition}
  The pre-Lie product is extended to the symmetric algebra as follows \cite{acg7}. Let $(A,\rhd)$ be a pre-Lie algebra. We consider the Hopf symmetric algebra $S(A)$ equipped with its usual unshuffle coproduct denoted $\Delta_{unsh}$. We will use without restraint the classical Sweedler notation: $\Delta_{unsh}(a)=\sum \limits_{\underset{}{a}}a^{(1)}\otimes a^{(2)}.$\\
    We extend the product $\rhd$ to $S(A)$. Let $a, b$ and
    $c \in S(A)$, and $x\in A$. We put:
  \begin{itemize}
        \item $1\rhd a=a$
        \item $a\rhd 1=\varepsilon (a)1$
        \item $(xa)\rhd b=x\rhd (a\rhd b)-(x\rhd a)\rhd b$
        \item $a\rhd (bc)=\sum \limits_{\underset{}{a}}(a^{(1)}\rhd b)(a^{(2)}\rhd c)$.
   \end{itemize}
   \begin{definition}\label{def.}
    We define the following $\star$ product on $S(A)$ by:
\begin{equation}
    a\star b=\sum_{a}a^{(1)}(a^{(2)}\rhd b).
\end{equation}
\end{definition}
\begin{theorem}
    \cite{acg..15, acg9} The triple $(S(A), \star, \Delta_{unsh})$ is a Hopf algebra which is isomorphic to the enveloping Hopf algebra $\mathcal{U}(A_{Lie})$ of the Lie algebra $A_{Lie}$.
\end{theorem}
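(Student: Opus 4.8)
The plan is to proceed in three steps: (i) show that $(S(A),\star)$ is an associative unital algebra; (ii) show that $\Delta_{unsh}$ is a morphism of algebras for $\star$, so that $(S(A),\star,\Delta_{unsh})$ is a bialgebra and, being connected, a Hopf algebra; (iii) exhibit a canonical isomorphism of Hopf algebras with $\mathcal{U}(A_{Lie})$. For step (i) one first checks that the four rules extending $\rhd$ to $S(A)$ are consistent, i.e. that $\rhd\colon S(A)\otimes S(A)\to S(A)$ is well defined; this is an induction on the symmetric degree of the left argument, using the third rule $(xa)\rhd b=x\rhd(a\rhd b)-(x\rhd a)\rhd b$ as the recursion and the fourth rule $a\rhd(bc)=\sum_{(a)}(a^{(1)}\rhd b)(a^{(2)}\rhd c)$ to deal with the right argument. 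The heart of the matter is then the pair of identities
\begin{equation*}
(a\star b)\rhd c=a\rhd(b\rhd c),\qquad \Delta_{unsh}(a\star b)=\Delta_{unsh}(a)\star\Delta_{unsh}(b),
\end{equation*}
for all $a,b,c\in S(A)$, which I would establish by a coupled induction on total degree: the base cases $a,b\in A$ reduce, via the third and fourth rules, to the left pre-Lie identity and the cocommutativity of $\Delta_{unsh}$, and the inductive step $a\mapsto xa$ is propagated using the same two rules together with coassociativity. Granting the first identity together with the (free) multiplicativity of $\Delta_{unsh}$ for the commutative product of $S(A)$, associativity of $\star$ is then a formal consequence; the unit for $\star$ is $1$ by the first two rules.

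For step (ii), the second displayed identity says precisely that $\Delta_{unsh}$ is multiplicative for $\star$, and the counit $\varepsilon$ is clearly multiplicative, so $(S(A),\star,\Delta_{unsh})$ is a bialgebra. Since the underlying coalgebra $(S(A),\Delta_{unsh})$ is connected (equivalently conilpotent), with $S(A)_0=\mathbf{k}\cdot 1$, the antipode is produced by the usual recursion on the coradical filtration, so the bialgebra is in fact a Hopf algebra.

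For step (iii), the inclusion $\iota\colon A\hookrightarrow S(A)$ of the degree-one part is a morphism of Lie algebras from $A_{Lie}$ into $(S(A),\star)$ with its commutator bracket, because for $x,y\in A$ one has $\Delta_{unsh}(x)=x\otimes 1+1\otimes x$, hence $x\star y=xy+x\rhd y$ and $y\star x=xy+y\rhd x$, so $x\star y-y\star x=x\rhd y-y\rhd x=[x,y]$. By the universal property of the enveloping algebra, $\iota$ extends uniquely to a morphism of algebras $\Phi\colon\mathcal{U}(A_{Lie})\to(S(A),\star)$, which is automatically a morphism of Hopf algebras since both sides are generated by primitives and $\Phi$ is the identity on them. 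To see that $\Phi$ is an isomorphism, equip both sides with their canonical increasing filtrations: on the source, $\operatorname{gr}\mathcal{U}(A_{Lie})\cong S(A)$ by the Poincar\'e--Birkhoff--Witt theorem; on the target, the length filtration $F_n=\bigoplus_{k\le n}S^k(A)$ is a filtration for $\star$ (one checks $S^q\rhd S^n\subseteq S^n$ by induction, whence $F_m\star F_n\subseteq F_{m+n}$), and $x\star a\equiv xa\pmod{F_{\ell-1}}$ for $a\in S^{\ell-1}$, so $\operatorname{gr}(S(A),\star)\cong(S(A),\cdot)$. The map $\Phi$ is filtered and induces the identity of $S(A)$ on associated graded objects, hence is bijective. (Alternatively, since $(S(A),\star,\Delta_{unsh})$ is cocommutative, connected and conilpotent, the Cartier--Milnor--Moore theorem identifies it with the enveloping algebra of its primitive part, and the primitives are exactly $A$ with bracket $[\cdot,\cdot]$.)

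I expect the only real obstacle to be the coupled induction in step (i) establishing the two fundamental identities (and, through them, the associativity of $\star$); these inductions on symmetric degree are conceptually straightforward but require careful bookkeeping with Sweedler notation and the recursive rules, and it is exactly there that the left pre-Lie identity is used. Everything else is formal, and the whole argument is the Guin--Oudom construction recalled here, in the twisted version due to Schedler.
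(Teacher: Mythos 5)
Your proposal is a correct outline of the Guin--Oudom construction: the paper itself offers no argument beyond citing \cite{acg9} (Lemma 2.10 and Theorem 2.12), and the steps you describe --- the two key identities $(a\star b)\rhd c=a\rhd(b\rhd c)$ and the compatibility of $\Delta_{unsh}$ with $\star$, followed by the filtration/PBW (or Cartier--Milnor--Moore) identification with $\mathcal{U}(A_{Lie})$ --- are exactly the content of that cited proof. So you are taking essentially the same approach as the paper's intended source, and your identification of the coupled induction as the only nontrivial point is accurate.
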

\begin{proof}
    This theorem was proved by D. Guin and J.-M. Oudom  in \cite{acg9} (Lemma 2.10 and Theorem 2.12).
\end{proof}
\begin{definition}
   \cite{acg.40} A twisted Lie algebra over a field $\mathbf{k}$, is a species $\mathbb{E}$ endowed with a bilinear bracket $[,]:\mathbb{E}\otimes \mathbb{E}\to \mathbb{E}$, satisfying:\\
    (i) $[ , ]+[ , ]\tau =0,$\\
    (ii) $[ , [ , ] ]+[ ,[ , ] ]\Sigma +[ ,[ , ] ]\Sigma^2=0$,\\
    where $\tau:\mathbb{E}\otimes \mathbb{E}\to \mathbb{E}\otimes \mathbb{E}$ is the flip, and $\Sigma:\mathbb{E}\otimes \mathbb{E}\otimes \mathbb{E}\to \mathbb{E}\otimes \mathbb{E}\otimes \mathbb{E}$ is the cyclic permutation of factors.
\end{definition}
\begin{definition}
    A left twisted pre-Lie algebra over a field $\mathbf{k}$, is a  species $\mathbb{E}$ with a binary composition
    $\circ:\mathbb{E}\otimes \mathbb{E}\to \mathbb{E}$, satisfing the left twisted pre-Lie algebra identity
   $$ \circ (\circ \otimes Id)-\circ (Id\otimes \circ)=\big( \circ (\circ \otimes Id)-\circ (Id\otimes \circ) \big)(\tau \otimes Id).$$
    \end{definition}
    
      T. Schedler in \cite{acg.20} shows that the properties of D. Guin and J.-M. Oudom above also work for the linear  species, i.e:\\
Let $(\mathbb{E},\circ)$ be a twisted pre-Lie algebra. We consider the twisted Hopf symmetric algebra $S(\mathbb{E})$ equipped with its usual unshuffle coproduct denoted $\Delta_{unsh}$. We extend the product $\circ$ to $S(\mathbb{E})$ as follows. Let $a, b$ and
    $c \in S(\mathbb{E})$, and $x\in \mathbb{E}$. We put:
  \begin{itemize}
        \item $1\circ a=a$
        \item $(xa)\circ b=x\circ (a\circ b)-(x\circ a)\circ b$
        \item $a\circ (bc)=\sum \limits_{\underset{}{a}}(a^{(1)}\circ b)(a^{(2)}\circ c)$,
   \end{itemize}
and if we define the product $\star$ on $S(\mathbb{E})$ by:
\begin{equation}
    a\star b=\sum_{a}a^{(1)}(a^{(2)}\circ b),
\end{equation}
then $(S(\mathbb{E}), \star, \Delta_{unsh})$ is isomorphic to the enveloping Hopf algebra $\mathcal{U}(\mathbb{E}_{Lie})$ of the twisted Lie algebra $\mathbb{E}_{Lie}$.
%%%%%%%%%%%%%%%
%%%%%%%%%
\section{The enveloping algebra of the twisted pre-Lie algebra of finite topological spaces}\label{The enveloping algebra of topologie}
\subsection{The pre-Lie algebra of rooted trees}
	Let $T$ the vector space spanned by the set of isomorphism classes of rooted trees and $\mathcal{H}=S(T)$.
	Grafting pre-Lie algebras of rooted trees were studied for the first time by F. Chapoton and M. Livernet \cite{acg5}, see also D. Manchon and A. Saidi \cite{acg1}. The grafting product is given, for all $t, s \in T$, by:
	\begin{equation}
	    t\rightarrow s=\sum_{s' \hbox{ \tiny{vertex of} }  s} t\rightarrow_{s'}s,
	\end{equation}
	where $t\rightarrow_{s'} s$ is the tree obtained by grafting the root of $t$ on the vertex $s'$ of $s$. More explicitly, the operation $t\rightarrow s$ consists of grafting the root of $t$ on every vertex of $s$ and summing up.
	\begin{theorem}\cite{acg5}
	Equipped by $\rightarrow$, the space $T$ is the free pre-Lie algebra with one generator.
	\end{theorem}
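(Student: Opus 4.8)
The plan is to establish three things: that $(T,\rightarrow)$ is a left pre-Lie algebra, that it is generated as a pre-Lie algebra by the one-vertex tree $\bullet$, and that it satisfies the universal property of the free pre-Lie algebra on one generator, i.e. that for every pre-Lie algebra $(A,\rhd)$ and every $a\in A$ there is a unique morphism of pre-Lie algebras $\Phi\colon T\to A$ with $\Phi(\bullet)=a$.

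First I would check the left pre-Lie identity directly. Expanding $(t\rightarrow u)\rightarrow v$ and $t\rightarrow(u\rightarrow v)$ as sums over ordered pairs of grafting sites, one sees that in $(t\rightarrow u)\rightarrow v$ the root of $t$ is always attached to a vertex of the $u$-part, and these terms occur identically in $t\rightarrow(u\rightarrow v)$, so they cancel in the difference. What remains is $\sum_{v',v''}$ (graft $t$ at the vertex $v'$ of $v$, graft $u$ at the vertex $v''$ of $v$), which is visibly symmetric under exchanging $t$ and $u$. Hence $(t\rightarrow u)\rightarrow v-t\rightarrow(u\rightarrow v)$ is symmetric in $t,u$, which is exactly the pre-Lie identity.

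Next, generation by $\bullet$: by induction on the number of vertices, every rooted tree $t$ with $n\ge 2$ vertices lies in the pre-Lie subalgebra generated by $\bullet$. Choosing a leaf of $t$ and letting $t'$ be the tree obtained by deleting it, $\bullet\rightarrow t'$ equals $t$ plus a sum of $n$-vertex trees in which $\bullet$ is attached at other vertices of $t'$. With respect to a suitable partial order on $n$-vertex trees (a standard triangularity of the grafting map on a well-chosen leaf), the trees $\bullet\rightarrow t'$ with $t'$ ranging over $(n-1)$-vertex trees therefore span, modulo pre-Lie expressions in smaller trees, all $n$-vertex trees, and the induction hypothesis closes the step. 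In particular a pre-Lie morphism out of $T$ is determined by the image of $\bullet$, which gives uniqueness of $\Phi$. For existence, given $(A,\rhd)$ and $a\in A$, I would build $\Phi$ using the symmetric brace operations attached to $\rhd$: on any pre-Lie algebra one defines $\langle x_1,\dots,x_k;y\rangle$ recursively from $\rhd$ (with $\langle\,;y\rangle=y$, $\langle x_1;y\rangle=x_1\rhd y$), proves — this is the technical heart — that they are well defined and symmetric in $x_1,\dots,x_k$, and that in $T$ the brace $\langle t_1,\dots,t_k;s\rangle$ is obtained by grafting $t_1,\dots,t_k$ all onto the root of $s$, so that $\rightarrow$ (grafting at an arbitrary vertex, not just the root) is expressed through nested braces. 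One then sets $\Phi(t)=\langle\Phi(t_1),\dots,\Phi(t_k);a\rangle$, where $t$ has root-subtrees $t_1,\dots,t_k$, and checks by induction on vertices that $\Phi(t\rightarrow s)=\Phi(t)\rhd\Phi(s)$ by comparing the brace presentations of $\rightarrow$ on the two sides.

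I expect the main obstacle to be exactly this brace calculus in an arbitrary pre-Lie algebra: the symmetry of $\langle x_1,\dots,x_k;y\rangle$ and the identity expressing grafting at a non-root vertex through iterated braces, which together force $\Phi$ to be multiplicative. An alternative that packages the same difficulty is the operadic route: identify the pre-Lie operad with the species of rooted trees labelled by their vertices, with operadic composition given by summing over graftings; then the free pre-Lie algebra on one generator is $\bigoplus_n\mathrm{PreLie}(n)_{S_n}$, the space of unlabelled rooted trees with product $\rightarrow$. This is clean to state but only relocates the combinatorics into the proof that $\mathrm{PreLie}(n)$ has the rooted-tree basis, so I would favour the explicit brace argument above.
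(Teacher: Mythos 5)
The paper offers no proof of this statement: it is imported verbatim from Chapoton--Livernet with only a citation, so there is no in-paper argument to compare yours against. Your plan follows the symmetric-brace route (essentially the Guin--Oudom calculus that the paper recalls in Section~\ref{enveloping}) rather than the operadic route of the cited reference, where freeness is obtained by identifying the pre-Lie operad itself with the species of vertex-labelled rooted trees; both are legitimate, and your verification of the left pre-Lie identity is correct and complete, since $t\rightarrow(u\rightarrow v)-(t\rightarrow u)\rightarrow v$ reduces to the sum over pairs of grafting sites both lying in $v$, which is manifestly symmetric in $t$ and $u$.

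Two steps of your outline do not yet work as written. First, the generation step: the elements $\bullet\rightarrow t'$ with $t'$ running over the $(n-1)$-vertex trees cannot span the $n$-vertex trees under any triangularity, simply by counting --- for $n=3$ there is one such element, $\bullet\rightarrow(\bullet\rightarrow\bullet)$, namely the sum of the $3$-chain and the corolla, inside a two-dimensional space. Generation still holds, but the correct induction is on the number of branches at the root: if $t$ is the tree whose root carries the subtrees $t_1,\dots,t_k$, then $t_1\rightarrow s$, where $s$ is the tree whose root carries $t_2,\dots,t_k$, equals $t$ plus trees having only $k-1$ root branches, and those correction terms are absorbed by the inner induction while $t_1$ and $s$ are handled by the induction on the number of vertices. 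Second, the well-definedness and symmetry of the braces $\langle x_1,\dots,x_k;y\rangle$ in an arbitrary pre-Lie algebra, together with the identity expressing grafting at a non-root vertex through nested braces, are precisely the substance of the freeness theorem; you explicitly defer them, so as it stands your morphism $\Phi$ is neither known to be well defined nor known to satisfy $\Phi(t\rightarrow s)=\Phi(t)\rhd\Phi(s)$. The proposal correctly locates the difficulty and chooses a workable strategy, but it is an outline rather than a proof.
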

		Now, we can use the method of D. Guin and J.-M. Oudom  \cite{acg9} to find the enveloping algebra of the grafting pre-Lie algebra of rooted trees. We consider the Hopf symmetric algebra $\mathcal{H}=S(T)$
of the pre-Lie algebra $(T, \rightarrow)$, equipped with its usual unshuffling coproduct $\Delta_{unsh}$. We extend the
product $\rightarrow$ to $\mathcal{H}$ by the same method used in (3.1), and we define the Grossman-Larson product \cite{acg50} $\star$ on $\mathcal{H}$ by:
$$t\star t'=\sum_{t}t^{(1)}(t^{(2)}\rightarrow t').$$
By construction, the space $(\mathcal{H}, \star, \Delta_{unsh})$ is a Hopf algebra.
%%%%%%%%%%%%
\subsection{ Twisted pre-Lie algebra of the finite topological spaces}
Let $\mathcal{T}_1=(X_1, \leq_{\mathcal{T}_1})$ and $\mathcal{T}_2=(X_2, \leq_{\mathcal{T}_2})$ be two finite topological spaces, and let $v\in X_2$. We define:
\begin{align*}
  \mathcal{T}_1\searrow_v \mathcal{T}_2:=(X_1\sqcup X_2, \leq),  
\end{align*}
where $\leq$ is obtained from $\leq_{\mathcal{T}_1}$ and $\leq_{\mathcal{T}_2}$ as follows: compare any pair in $X_2$ (resp. $X_1$)
by using $\leq_{\mathcal{T}_2}$ (resp. $\leq_{\mathcal{T}_1}$), and compare any element $y\in X_2$ with any element $x\in X_1$.\\
\\
To sum up, for any $x, y \in X_1\sqcup X_2$, $x\leq y$ if and only if:
\begin{itemize}
    \item Either $x, y \in X_1$ and $x\leq_{\mathcal{T}_1}y$,
    \item or $x,y \in X_2$ and $x\leq_{\mathcal{T}_2}y$,
    \item or $x\in X_2$, $y\in X_1$ and $x\leq_{\mathcal{T}_2} v$.
\end{itemize}
\begin{example}

$\fcolorbox{white}{white}{
\scalebox{0.7}{
  \begin{picture}(26,39) (415,-324)
    \SetWidth{1.0}
    \SetColor{Black}
    \Vertex(433,-321){2}
    \Vertex(433,-309){2}
    \Vertex(418,-308){2}
    \Vertex(419,-321){2}
    \Line(419,-321)(418,-309)
    \Line(433,-322)(417,-308)
    \Text(409,-325)[lb]{\small{\Black{$s_2$}}}
    \Text(437,-325)[lb]{\small{\Black{$s_1$}}}
    \Text(412,-306)[lb]{\small{\Black{${s_3}$}}}
    \Text(436,-308)[lb]{\small{\Black{$s_4$}}}
    \Line(433,-322)(433,-309)
  \end{picture}
}} \searrow_{t_2} \fcolorbox{white}{white}{
\scalebox{0.7}{
  \begin{picture}(0,27) (443,-336)
    \SetWidth{1.0}
    \SetColor{Black}
    \Vertex(433,-333){2}
    \Vertex(433,-321){2}
    \Text(437,-325)[lb]{\small{\Black{$t_2$}}}
    \Text(437,-338)[lb]{\small{\Black{$t_1$}}}
    \Line(433,-334)(433,-321)
  \end{picture}
}}=\fcolorbox{white}{white}{
\scalebox{0.7}{
  \begin{picture}(23,64) (417,-324)
    \SetWidth{1.0}
    \SetColor{Black}
    \Vertex(433,-322){2}
    \Vertex(433,-310){2}
    \Vertex(433,-296){2}
    \Vertex(433,-284){2}
    \Vertex(418,-283){2}
    \Vertex(419,-296){2}
    \Line(433,-323)(433,-284)
    \Line(434,-311)(419,-297)
    \Line(419,-296)(418,-284)
    \Line(433,-297)(417,-283)
    \Text(437,-328)[lb]{\small{\Black{$t_1$}}}
    \Text(437,-316)[lb]{\small{\Black{$t_2$}}}
    \Text(409,-302)[lb]{\small{\Black{$s_2$}}}
    \Text(437,-302)[lb]{\small{\Black{$s_1$}}}
    \Text(408,-287)[lb]{\small{\Black{$s_3$}}}
    \Text(437,-288)[lb]{\small{\Black{$s_4$}}}
  \end{picture}
}},\hspace*{1cm}\fcolorbox{white}{white}{
\scalebox{0.7}{
  \begin{picture}(26,39) (415,-324)
    \SetWidth{1.0}
    \SetColor{Black}
    \Vertex(433,-321){2}
    \Vertex(433,-309){2}
    \Vertex(418,-308){2}
    \Vertex(419,-321){2}
    \Line(419,-321)(418,-309)
    \Line(433,-322)(417,-308)
    \Text(409,-325)[lb]{\small{\Black{$s_2$}}}
    \Text(437,-325)[lb]{\small{\Black{$s_1$}}}
    \Text(412,-306)[lb]{\small{\Black{$s_3$}}}
    \Text(436,-308)[lb]{\small{\Black{$s_4$}}}
    \Line(433,-322)(433,-309)
  \end{picture}
}} \searrow_{t_1}  \fcolorbox{white}{white}{
\scalebox{0.7}{
  \begin{picture}(-3,27) (444,-336)
    \SetWidth{1.0}
    \SetColor{Black}
    \Vertex(433,-333){2}
    \Vertex(433,-321){2}
    \Text(437,-325)[lb]{\small{\Black{$t_2$}}}
    \Text(437,-338)[lb]{\small{\Black{$t_1$}}}
    \Line(433,-334)(433,-321)
  \end{picture}
}}=\fcolorbox{white}{white}{
\scalebox{0.7}{
  \begin{picture}(73,63) (417,-324)
    \SetWidth{0.0}
    \SetColor{Black}
    \Vertex(433,-297){2}
    \Vertex(433,-285){2}
    \Vertex(418,-284){2}
    \Vertex(419,-297){2}
    \SetWidth{1.0}
    \Line(419,-297)(418,-285)
    \Line(433,-298)(417,-284)
     \Text(409,-302)[lb]{\small{\Black{$s_2$}}}
    \Text(437,-302)[lb]{\small{\Black{$s_1$}}}
    \Text(408,-287)[lb]{\small{\Black{$s_3$}}}
    \Text(437,-288)[lb]{\small{\Black{$s_4$}}}
    \Line(433,-297)(432,-284)
    \Vertex(449,-321){2}
    \Line(448,-321)(419,-298)
    \Line(449,-321)(433,-298)
    \Vertex(449,-309){2}
    \Line(449,-321)(449,-310)
    \Text(454,-328)[lb]{\small{\Black{$t_1$}}}
    \Text(454,-314)[lb]{\small{\Black{$t_2$}}}
  \end{picture}
}}$

\end{example}
\begin{proposition}
Let $\mathcal{T}_1=(X_1, \leq_{\mathcal{T}_1})$ and $\mathcal{T}_2=(X_2, \leq_{\mathcal{T}_2})$ be two connected finite topological spaces, and let $v\in X_2$. Then $\mathcal{T}_1\searrow_v \mathcal{T}_2:=(X_1\sqcup X_2, \leq)$, is a connected finite topological space.
\end{proposition}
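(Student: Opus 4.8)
\emph{Proof plan.} The approach is to work through the standard dictionary between finite topological spaces and preorders. Recall that a finite topological space $(X,\le)$ is connected if and only if its \emph{comparability graph} $G(X,\le)$ --- the graph with vertex set $X$ and an edge between $x$ and $y$ whenever $x\le y$ or $y\le x$ --- is connected; this is because a subset $U\subseteq X$ is clopen for the Alexandrov topology exactly when it is simultaneously an up-set and a down-set, and such $U$ are precisely the unions of connected components of $G(X,\le)$. So it suffices to show that $G(X_1\sqcup X_2,\le)$ is connected, where $\le$ is the preorder defining $\mathcal T_1\searrow_v\mathcal T_2$. We may assume $X_1,X_2\neq\varnothing$, since a connected space is nonempty (and if one block were empty there would be nothing to prove).

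First I would record two immediate consequences of the definition of $\le$: two elements lying in the same block $X_i$ are $\le$-comparable if and only if they are $\le_{\mathcal T_i}$-comparable (no clause of the definition mixing $X_1$ and $X_2$ can apply to them). Hence the induced subgraph of $G(X_1\sqcup X_2,\le)$ on $X_i$ is exactly $G(X_i,\le_{\mathcal T_i})$, which is connected by the hypothesis that $\mathcal T_i$ is connected. The key step is then the observation that $v$ is $\le$-below every element of $X_1$: taking $x=v$ in the third clause of the definition and using reflexivity $v\le_{\mathcal T_2}v$ yields $v\le y$ for all $y\in X_1$, so $v$ is adjacent in $G(X_1\sqcup X_2,\le)$ to every vertex of $X_1$. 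Putting the three facts together: every vertex of $X_1$ is joined to $v$ by an edge, every vertex of $X_2$ is joined to $v$ inside the connected subgraph $G(X_2,\le_{\mathcal T_2})$, and $v\in X_2$; therefore $G(X_1\sqcup X_2,\le)$ is connected, and so is $\mathcal T_1\searrow_v\mathcal T_2$.

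I do not expect a genuine obstacle here. The only point requiring a word of justification is the clopen-set characterization of connectedness, which one may also bypass by a direct argument: if $U$ is a nonempty clopen subset of $\mathcal T_1\searrow_v\mathcal T_2$, then $U\cap X_1$ and $U\cap X_2$ are clopen in $\mathcal T_1$ and $\mathcal T_2$ respectively, hence each is empty or its whole block; the possibility $U\cap X_2=\varnothing$ is excluded because $v\le y$ for $y\in X_1$ together with $U$ being a down-set would force $v\in U\subseteq X_1$, contradicting $v\in X_2$; and $U\cap X_2=X_2$ forces $X_1\subseteq U$ since $U$ is an up-set containing $v$ and $v\le y$ for every $y\in X_1$. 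Either way $U=X_1\sqcup X_2$, which is the desired conclusion.
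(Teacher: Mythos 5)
Your argument is correct, but it is essentially complementary to the paper's proof rather than a variant of it. The paper's proof of this proposition consists entirely of verifying that $\le$ is a preorder on $X_1\sqcup X_2$ (reflexivity, then a four-case check of transitivity); it never addresses the connectedness claim at all. You do the opposite: you take for granted that $\le$ is a preorder (``the preorder defining $\mathcal T_1\searrow_v\mathcal T_2$'') and prove only connectedness, via the clopen-set / comparability-graph characterization together with the key observation that $v\le y$ for every $y\in X_1$ (take $x=v$ in the third clause and use $v\le_{\mathcal T_2}v$), so that $v$ is joined to all of $X_1$ while $X_2$ is already connected. That argument is sound --- including your remark that connectedness of $\mathcal T_1$ is not actually needed, and the direct clopen-set variant at the end --- and it is exactly the piece the paper omits. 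To make your proof a complete proof of the statement as written, you should also include the routine verification that $\le$ is transitive (reflexivity is immediate); the only cases mixing the two blocks are, e.g., $x,y\in X_2$, $z\in X_1$ with $x\le y$ and $y\le z$, where $x\le_{\mathcal T_2}y$ and $y\le_{\mathcal T_2}v$ give $x\le_{\mathcal T_2}v$ and hence $x\le z$, and $x\in X_2$, $y,z\in X_1$, where $x\le y$ already gives $x\le_{\mathcal T_2}v$ and hence $x\le z$. With that added, your write-up establishes strictly more of the proposition than the paper's own proof does.
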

\begin{proof}
Let $\mathcal{T}_1=(X_1, \leq_{\mathcal{T}_1})$ and $\mathcal{T}_2=(X_2, \leq_{\mathcal{T}_2})$ be two connected finite topological spaces, and let $v\in X_2$.\\
We must show that $\leq$ is a preorder relation on $X_1\sqcup X_2$:\\
\textbf{Reflexivity}; Let $x\in X_1\sqcup X_2$, then $x\in X_1$ or $x\in X_2$.\\
If $x\in X_1$, we have $x\leq_{\mathcal{T}_1} x$, then $x\leq x$.\\
If $x\in X_2$, we have $x\leq_{\mathcal{T}_2} x$, then $x\leq x$.\\
\textbf{Transitivity}; Let $x, y, z\in X_1\sqcup X_2$ such that $x\leq y$ and $y\leq z$. So we have four possible cases:
\begin{itemize}
    \item First case; $x, y, z\in X_1$, and ($x\leq_{\mathcal{T}_1} y$ and $y\leq_{\mathcal{T}_1} z$).\\
Since $\leq_{\mathcal{T}_1}$ is transitive, then $x\leq_{\mathcal{T}_1} z$, then $x\leq z$.
    \item Second case; $x, y, z\in X_2$, and ($x\leq_{\mathcal{T}_2} y$ and $y\leq_{\mathcal{T}_2} z$).\\
Since $\leq_{\mathcal{T}_2}$ is transitive, then $x\leq_{\mathcal{T}_2} z$, then $x\leq z$.
    \item Third case; $x, y\in X_2$, $z\in X_1$, and ($x\leq_{\mathcal{T}_2} y$ and $y\leq_{\mathcal{T}_2} v$).\\
Since $\leq_{\mathcal{T}_2}$ is transitive, then $x\leq_{\mathcal{T}_2} v$, and since $x\in X_2$ and $z\in X_1$ therefore $x\leq z$.
    \item Fourth case; $x\in X_2$, $y, z\in X_1$, and ($x\leq_{\mathcal{T}_2} v$ and $y\leq_{\mathcal{T}_1} z$).\\
In this case we have $x\in X_2$, $z\in X_1$, and $x\leq_{\mathcal{T}_2} v$, then $x\leq z$.
\end{itemize}
\end{proof}
\begin{proposition}
Let $\mathcal{T}_1=(X_1, \leq_{\mathcal{T}_1})$, $\mathcal{T}_2=(X_2, \leq_{\mathcal{T}_2})$ and $\mathcal{T}_3=(X_3, \leq_{\mathcal{T}_3})$ be three finite connected topological spaces, and let $u\in X_2$, $v, w\in X_3$. Then\\
1) $(\mathcal{T}_1\searrow_u \mathcal{T}_2)\searrow_w \mathcal{T}_3=\mathcal{T}_1\searrow_u(\mathcal{T}_2\searrow_w \mathcal{T}_3)$.\\
2) $\mathcal{T}_1\searrow_v(\mathcal{T}_2\searrow_w \mathcal{T}_3)=\mathcal{T}_2\searrow_w(\mathcal{T}_1\searrow_v \mathcal{T}_3)$.
\end{proposition}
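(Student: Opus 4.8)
The plan is to verify both identities by the same method used in the proof of the previous proposition: since every topological space appearing on either side is built on the same underlying set (namely $X_1\sqcup X_2\sqcup X_3$), it suffices to check that the two preorders being compared coincide, i.e.\ that for every pair $x,y$ in that set, $x\le y$ holds on the left-hand side if and only if it holds on the right-hand side. Because the definition of $\searrow_v$ prescribes comparabilities by a short list of cases according to which block each of $x$ and $y$ lies in, the whole argument reduces to a finite case analysis over the (at most nine, effectively fewer by the block-ordering structure) possible placements of the pair $(x,y)$ among $X_1$, $X_2$, $X_3$.

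For part~(1), I would first unwind $(\mathcal{T}_1\searrow_u\mathcal{T}_2)$: on $X_1\sqcup X_2$ the order is $\le_{\mathcal{T}_1}$ within $X_1$, $\le_{\mathcal{T}_2}$ within $X_2$, and $x\le y$ whenever $x\in X_2$, $y\in X_1$, $x\le_{\mathcal{T}_2}u$. Then grafting this at $w\in X_3$ adds: $z\le y$ for $z\in X_1\sqcup X_2$, $y\in X_3$(sic)\,--- wait, the grafting $\searrow_w$ puts the \emph{left} space below, so elements of $X_1\sqcup X_2$ become comparable downward to $X_3$ via the condition (in the $(\mathcal{T}_1\searrow_u\mathcal{T}_2)$-order) of being $\le w$. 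Unravelling the right-hand side $\mathcal{T}_1\searrow_u(\mathcal{T}_2\searrow_w\mathcal{T}_3)$ similarly, I expect both preorders to reduce to the following explicit description on $X_1\sqcup X_2\sqcup X_3$: use $\le_{\mathcal{T}_i}$ within each $X_i$; for $x\in X_1$, $y\in X_2$: never (here one must be careful about the direction of $\searrow$, and in fact it is $x\in X_2, y\in X_1$ that can hold, with condition $x\le_{\mathcal{T}_2}u$); for $x\in X_3$, $y\in X_2$: $x\le_{\mathcal{T}_3}w$; for $x\in X_3$, $y\in X_1$: $x\le_{\mathcal{T}_3}w$ (one step through $X_2$ is automatic once $w$ is below $u$\,--- this is exactly where associativity of the construction is non-trivial and where one uses that $w\le_{\mathcal{T}_2\searrow_w\mathcal{T}_3}u$ reduces correctly); and similarly for $x\in X_2, y\in X_1$. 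Matching the two lists term by term, using transitivity of each $\le_{\mathcal{T}_i}$ to collapse two-step comparabilities, gives the identity.

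For part~(2), the situation is more symmetric: here $\mathcal{T}_1$ and $\mathcal{T}_2$ are both grafted onto $\mathcal{T}_3$ at the (distinct-block) vertices $v,w\in X_3$, and no grafting is performed between $X_1$ and $X_2$. Unwinding $\mathcal{T}_1\searrow_v(\mathcal{T}_2\searrow_w\mathcal{T}_3)$: within each block use $\le_{\mathcal{T}_i}$; for $x\in X_2$, $y\in X_3$: $x\le_{\mathcal{T}_2}w$; for $x\in X_1$, $y\in X_3$: $x\le_{\mathcal{T}_1}v$; for $x\in X_1$, $y\in X_2$: this requires $x\le_{(\mathcal{T}_2\searrow_w\mathcal{T}_3)}v$, but $v\in X_3$ while $x\in X_1$, so this comparability never holds; and no element of $X_2\sqcup X_3$ is ever below an element of $X_1$, nor is any element of $X_3$ below $X_2$. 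The expression $\mathcal{T}_2\searrow_w(\mathcal{T}_1\searrow_v\mathcal{T}_3)$ yields the mirror-image list with the roles of $(X_1,v)$ and $(X_2,w)$ interchanged, and since that list was already symmetric in those two pieces of data, the two preorders coincide. The only genuine obstacle here is bookkeeping: being consistent about the direction of $\searrow$ (which block sits below which) and confirming that the ``cross'' comparabilities between $X_1$ and $X_2$ are empty on both sides; once that is pinned down, (2) is immediate. I expect the harder of the two to be~(1), precisely because there a legitimate two-step comparability ($X_3$ below $X_2$ below $X_1$) must be shown to be produced, with the correct condition, on both sides; the key input is transitivity of $\le_{\mathcal{T}_3}$ together with the fact that in $\mathcal{T}_2\searrow_w\mathcal{T}_3$ one has $w\le u$ is not what is used\,--- rather, that $x\in X_3$ with $x\le_{\mathcal{T}_3}w$ becomes $\le$ every element of $X_2$, in particular $\le u$, hence $\le$ every element of $X_1$ after grafting at $u$.
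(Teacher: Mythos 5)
Your strategy is exactly the paper's: both sides live on $X_1\sqcup X_2\sqcup X_3$, so one unwinds each composite into its defining list of clauses (one clause per ordered pair of blocks) and checks the lists coincide; your final description for part (1) and the symmetry argument for part (2) match what the paper does. However, several of your intermediate statements have the direction of $\searrow$ reversed, and as written they are false. In $\mathcal A\searrow_w\mathcal B$ with $w$ in the underlying set of $\mathcal B$, the cross clause is: $x$ in $\mathcal B$, $y$ in $\mathcal A$, and $x\le_{\mathcal B}w$ — so the \emph{left} factor sits on top, and elements of the right factor lying at or below $w$ go below everything in the left factor. Consequently, in part (2) the correct clauses are ``$x\in X_3$, $y\in X_2$: $x\le_{\mathcal T_3}w$'' and ``$x\in X_3$, $y\in X_1$: $x\le_{\mathcal T_3}v$'' (your versions ``$x\in X_2$, $y\in X_3$: $x\le_{\mathcal T_2}w$'' and ``$x\in X_1$, $y\in X_3$: $x\le_{\mathcal T_1}v$'' are not even well formed, since $v,w\in X_3$), and your summary ``no element of $X_2\sqcup X_3$ is ever below an element of $X_1$, nor is any element of $X_3$ below $X_2$'' is exactly backwards. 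These slips do not change the outcome — the two lists in part (2) are still mirror images under swapping $(X_1,v)\leftrightarrow(X_2,w)$, so the equality holds — but they must be fixed for the proof to be correct as written.

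One further clarification on part (1): no transitive closure or ``collapsing of two-step comparabilities'' is involved. The relation defining $\searrow_w$ is given directly by the three clauses (and is already a preorder by the preceding proposition), so the proof is pure clause-matching; the only computation is that for $x\in X_3$ and $u\in X_2$, the condition $x\le_{\mathcal T_2\searrow_w\mathcal T_3}u$ unfolds, by definition, to $x\le_{\mathcal T_3}w$, which is precisely why the clause ``$x\in X_3$, $y\in X_1$: $x\le_{\mathcal T_3}w$'' appears identically on both sides. You do reach this conclusion at the end, but the route through ``transitivity of $\le_{\mathcal T_3}$'' is a red herring.
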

\begin{proof}
1) Let $\mathcal{T}_1=(X_1, \leq_{\mathcal{T}_1})$, $\mathcal{T}_2=(X_2, \leq_{\mathcal{T}_2})$ and $\mathcal{T}_3=(X_3, \leq_{\mathcal{T}_3})$ be three finite connected topologies, and let $u\in X_2$, 
$w\in X_3$. 
We denote $\mathcal{T}'_3=(\mathcal{T}_1\searrow_u \mathcal{T}_2)=(X_1\sqcup X_2, \leq_3)$,
with $\leq_3$ defined on $X_1\sqcup X_2$ by:\\
$x, y\in X_1\sqcup X_2$ et $x\leq_3 y$ if and only if:
\begin{itemize}
    \item Either $x, y \in X_1$ and $x\leq_{\mathcal{T}_1}y$,
    \item or $x, y \in X_2$ and $x\leq_{\mathcal{T}_2}y$,
    \item or $x\in X_2$, $y\in X_1$ and $x\leq_{\mathcal{T}_2} u$,
\end{itemize}
and we denote
$\mathcal{T}=(\mathcal{T}_1\searrow_u \mathcal{T}_2)\searrow_w \mathcal{T}_3=(X_1\sqcup X_2\sqcup X_3, \leq)$, with $\leq$ defined on $X_1\sqcup X_2\sqcup X_3$ by:\\
$x, y\in X_1\sqcup X_2\sqcup X_3$ et $x\leq y$ if and only if:
\begin{itemize}
    \item Either $x, y \in X_1\sqcup X_2$ and $x\leq_3y$,
    \item or $x, y \in X_3$ and $x\leq_{\mathcal{T}_3}y$,
    \item or $x\in X_3$, $y\in X_1\sqcup X_2$ and $x\leq_{\mathcal{T}_3} w$,
\end{itemize}
then\\
$x, y\in X_1\sqcup X_2\sqcup X_3$ et $x\leq y$ if and only if:
\begin{itemize}
    \item Either $x, y \in X_1$ and $x\leq_{\mathcal{T}_1}y$,
    \item or $x,y \in X_2$ and $x\leq_{\mathcal{T}_2}y$,
    \item or $x\in X_2$, $y\in X_1$ and $x\leq_{\mathcal{T}_2} u$,
    \item or $x,y \in X_3$ and $x\leq_{\mathcal{T}_3}y$,
    \item or $x\in X_3$, $y\in X_1\sqcup X_2$ and $x\leq_{\mathcal{T}_3} w$.
\end{itemize}
On the other hand,\\
we denote $\mathcal{T}'_1=\mathcal{T}_2\searrow_w \mathcal{T}_3=(X_2\sqcup X_3, \leq_1)$,
with $\leq_1$ defined on $X_2\sqcup X_3$ by:\\
$x, y\in X_2\sqcup X_3$ et $x\leq_1 y$ if and only if:
\begin{itemize}
    \item Either $x, y \in X_2$ and $x\leq_{\mathcal{T}_2}y$,
    \item or $x, y \in X_3$ and $x\leq_{\mathcal{T}_3}y$,
    \item or $x\in X_3$, $y\in X_2$ and $x\leq_{\mathcal{T}_3} w$,
\end{itemize}
and we denote
$\mathcal{T}'=\mathcal{T}_1\searrow_u(\mathcal{T}_2\searrow_w \mathcal{T}_3)=(X_1\sqcup X_2\sqcup X_3, \leq')$, with $\leq'$ defined on $X_1\sqcup X_2\sqcup X_3$ by:\\
$x, y\in X_1\sqcup X_2\sqcup X_3$ et $x\leq' y$ if and only if:
\begin{itemize}
    \item Either $x, y \in X_1$ and $x\leq_{\mathcal{T}_1}y$,
    \item or $x, y \in X_2\sqcup X_3$ and $x\leq_1 y$,
    \item or $x\in X_2\sqcup X_3$, $y\in X_1$ and $x\leq_1 u$,
\end{itemize}
then \\
$x, y\in X_1\sqcup X_2\sqcup X_3$ et $x\leq' y$ if and only if:
\begin{itemize}
    \item Either $x, y \in X_1$ and $x\leq_{\mathcal{T}_1}y$,
    \item or $x, y \in X_2$ and $x\leq_{\mathcal{T}_2}y$,
    \item or $x, y \in X_3$ and $x\leq_{\mathcal{T}_3}y$,
    \item or $x\in X_3$, $y\in X_2$ and $x\leq_{\mathcal{T}_3} w$,
    \item or $x\in X_2$, $y\in X_1$ and $x\leq_{\mathcal{T}_2} u$,
    \item or $x\in X_3$, $y\in X_1$ and $x\leq_{\mathcal{T}_3} w$,
\end{itemize}
then $\leq=\leq'$ on $X_1\sqcup X_2\sqcup X_3$.\\
Then
$$(\mathcal{T}_1\searrow_u \mathcal{T}_2)\searrow_w \mathcal{T}_3=\mathcal{T}_1\searrow_u(\mathcal{T}_2\searrow_w \mathcal{T}_3).$$
2) Let $v, w \in X_3$, we denote $\mathcal{T}'_2=(\mathcal{T}_1\searrow_v \mathcal{T}_3)=(X_1\sqcup X_3, \leq_2)$, with $\leq_2$ defined on $X_1\sqcup X_3$ by:\\
$x, y\in X_1\sqcup X_3$ et $x\leq_2 y$ if and only if:
\begin{itemize}
    \item Either $x, y \in X_1$ and $x\leq_{\mathcal{T}_1}y$,
    \item or $x, y \in X_3$ and $x\leq_{\mathcal{T}_3}y$,
    \item or $x\in X_3$, $y\in X_1$ and $x\leq_{\mathcal{T}_3} v$,
\end{itemize}
and we denote\\
$\mathcal{T}=\mathcal{T}_2\searrow_w (\mathcal{T}_1\searrow_v \mathcal{T}_3)=(X_1\sqcup X_2\sqcup X_3, \leq)$, with $\leq$ defined on $X_1\sqcup X_2\sqcup X_3$ by:\\
$x, y\in X_1\sqcup X_2\sqcup X_3$ et $x\leq y$ if and only if:
\begin{itemize}
    \item Either $x, y\in X_2$ and $x\leq_{\mathcal{T}_2}y$,
    \item or $x, y\in X_1\sqcup X_3$ and $x\leq_2 y$,
    \item or $x\in X_1\sqcup X_3$, $y\in X_2$ and $x\leq_2 w$,
\end{itemize}
then\\
$x, y\in X_1\sqcup X_2\sqcup X_3$ et $x\leq y$ if and only if:
\begin{itemize}
    \item Either $x, y\in X_2$ and $x\leq_{\mathcal{T}_2}y$,
    \item or $x, y \in X_1$ and $x\leq_{\mathcal{T}_1}y$,
    \item or $x, y \in X_3$ and $x\leq_{\mathcal{T}_3}y$,
    \item or $x\in X_3$, $y\in X_1$ and $x\leq_{\mathcal{T}_3} v$,
    \item or $x\in X_3$, $y\in X_2$ and $x\leq_{\mathcal{T}_3} w$.
\end{itemize}
On the other hand,\\
 we denote $\mathcal{T}'_1=(\mathcal{T}_2\searrow_w \mathcal{T}_3)=(X_2\sqcup X_3, \leq_1)$
 , with $\leq_1$ defined on $X_2\sqcup X_3$ by:\\
$x, y\in X_2\sqcup X_3$ et $x\leq_1 y$ if and only if:
\begin{itemize}
    \item Either $x, y \in X_2$ and $x\leq_{\mathcal{T}_2}y$,
    \item or $x, y \in X_3$ and $x\leq_{\mathcal{T}_3}y$,
    \item or $x\in X_3$, $y\in X_2$ and $x\leq_{\mathcal{T}_3} w$,
\end{itemize}
and we denote\\
$\mathcal{T}'=\mathcal{T}_1\searrow_v (\mathcal{T}_2\searrow_w \mathcal{T}_3)=(X_1\sqcup X_2\sqcup X_3, \leq')$, with $\leq'$ defined on $X_1\sqcup X_2\sqcup X_3$ by:\\
$x, y\in X_1\sqcup X_2\sqcup X_3$ et $x\leq' y$ if and only if:
\begin{itemize}
    \item Either $x, y\in X_1$ and $x\leq_{\mathcal{T}_1}y$,
    \item or $x, y\in X_2\sqcup X_3$ and $x\leq_1 y$,
    \item or $x\in X_2\sqcup X_3$, $y\in X_1$ and $x\leq_1 v$,
\end{itemize}
then\\
$x, y\in X_1\sqcup X_2\sqcup X_3$ et $x\leq' y$ if and only if:
\begin{itemize}
    \item Either $x, y\in X_1$ and $x\leq_{\mathcal{T}_1}y$,
    \item or $x, y \in X_2$ and $x\leq_{\mathcal{T}_2}y$,
    \item or $x, y \in X_3$ and $x\leq_{\mathcal{T}_3}y$,
    \item or $x\in X_3$, $y\in X_2$ and $x\leq_{\mathcal{T}_3} w$,
    \item or $x\in X_3$, $y\in X_1$ and $x\leq_{\mathcal{T}_3} v$,
\end{itemize}
then $\leq=\leq'$ on $X_1\sqcup X_2\sqcup X_3$.\\
Then 
$$\mathcal{T}_1\searrow_v(\mathcal{T}_2\searrow_w \mathcal{T}_3)=\mathcal{T}_2\searrow_w(\mathcal{T}_1\searrow_v \mathcal{T}_3).$$
\end{proof}
We then define the grafting law in the species of connected finite topological spaces by:\\
For all $ \mathcal{T}_1 \in \mathbb{T}_{X_1,} \mathcal{T}_2 \in \mathbb{T}_{X_2}$
$$ \mathcal{T}_1\searrow \mathcal{T}_2=\sum_{v\in X_2}\mathcal{T}_1\searrow_v \mathcal{T}_2 \in \mathbb{T}_{X_1 \sqcup X_2}.$$
\begin{theorem}
$(\mathbb{V}, \searrow)$ is a twisted pre-Lie algebra. 
\end{theorem}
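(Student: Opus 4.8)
The plan is to reduce the twisted left pre-Lie identity for $\searrow$ entirely to the two associativity-type identities for the pointed grafting $\searrow_v$ proved in the previous Proposition, together with the (routine) observation that $\searrow$ is a morphism of species. First I would note that the twisted left pre-Lie identity, evaluated on a triple $\mathcal{T}_1\otimes\mathcal{T}_2\otimes\mathcal{T}_3$ with $\mathcal{T}_i\in\mathbb{V}_{X_i}$, says exactly that the associator
$$A(\mathcal{T}_1,\mathcal{T}_2,\mathcal{T}_3):=(\mathcal{T}_1\searrow\mathcal{T}_2)\searrow\mathcal{T}_3-\mathcal{T}_1\searrow(\mathcal{T}_2\searrow\mathcal{T}_3)$$
is symmetric under the exchange of its first two arguments, i.e. $A(\mathcal{T}_1,\mathcal{T}_2,\mathcal{T}_3)=A(\mathcal{T}_2,\mathcal{T}_1,\mathcal{T}_3)$, since $(\tau\otimes\mathrm{Id})(\mathcal{T}_1\otimes\mathcal{T}_2\otimes\mathcal{T}_3)=\mathcal{T}_2\otimes\mathcal{T}_1\otimes\mathcal{T}_3$ in the category of species. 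So it suffices to prove this symmetry.

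Next I would expand both terms of $A$ using $\mathcal{T}\searrow\mathcal{S}=\sum_{v}\mathcal{T}\searrow_v\mathcal{S}$, being careful about the range of each point. On one hand $(\mathcal{T}_1\searrow\mathcal{T}_2)\searrow\mathcal{T}_3=\sum_{w\in X_3}\sum_{v\in X_2}(\mathcal{T}_1\searrow_v\mathcal{T}_2)\searrow_w\mathcal{T}_3$. On the other hand $\mathcal{T}_2\searrow_w\mathcal{T}_3$ lives on $X_2\sqcup X_3$, so $\mathcal{T}_1\searrow(\mathcal{T}_2\searrow\mathcal{T}_3)=\sum_{w\in X_3}\sum_{p\in X_2\sqcup X_3}\mathcal{T}_1\searrow_p(\mathcal{T}_2\searrow_w\mathcal{T}_3)$, and I split this inner sum according to whether $p\in X_2$ or $p\in X_3$. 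For $p\in X_2$, part (1) of the Proposition gives $\mathcal{T}_1\searrow_p(\mathcal{T}_2\searrow_w\mathcal{T}_3)=(\mathcal{T}_1\searrow_p\mathcal{T}_2)\searrow_w\mathcal{T}_3$, which cancels termwise against the first expansion. What remains is
$$A(\mathcal{T}_1,\mathcal{T}_2,\mathcal{T}_3)=-\sum_{w\in X_3}\,\sum_{p\in X_3}\mathcal{T}_1\searrow_p(\mathcal{T}_2\searrow_w\mathcal{T}_3).$$

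Finally, part (2) of the Proposition states $\mathcal{T}_1\searrow_p(\mathcal{T}_2\searrow_w\mathcal{T}_3)=\mathcal{T}_2\searrow_w(\mathcal{T}_1\searrow_p\mathcal{T}_3)$ for $p,w\in X_3$; relabelling the summation indices $p\leftrightarrow w$ in the displayed double sum then shows it is invariant under $\mathcal{T}_1\leftrightarrow\mathcal{T}_2$, which is exactly $A(\mathcal{T}_1,\mathcal{T}_2,\mathcal{T}_3)=A(\mathcal{T}_2,\mathcal{T}_1,\mathcal{T}_3)$. To conclude that $(\mathbb{V},\searrow)$ is a \emph{twisted} pre-Lie algebra I would also record that $\searrow$ is a morphism of linear species — it is bilinear and commutes with relabelling bijections, which is immediate from the definition of $\searrow_v$ — and that $\searrow$ indeed takes values in $\mathbb{V}$, i.e. preserves connectedness, which is the earlier Proposition. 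I expect the only genuine difficulty to be purely bookkeeping: keeping the index ranges straight so that parts (1) and (2) of the Proposition are applied in precisely the regimes where they hold (the grafting point in the middle space, resp. both grafting points in the last space); no idea beyond those two identities is needed.
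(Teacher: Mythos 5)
Your proposal is correct and follows essentially the same route as the paper: expand both iterated products over grafting points, split the inner sum over $X_2\sqcup X_3$, cancel the $X_2$-part via part (1) of the preceding Proposition, and deduce the symmetry of the remaining double sum over $X_3\times X_3$ from part (2). You merely make explicit a few steps the paper leaves implicit (the invocation of part (2) and the checks that $\searrow$ is a species morphism preserving connectedness).
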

\begin{proof}
Let $\mathcal{T}_1=(X_1, \leq_{\mathcal{T}_1})$, $\mathcal{T}_2=(X_2, \leq_{\mathcal{T}_2})$ and $\mathcal{T}_3=(X_3, \leq_{\mathcal{T}_3})$ three finite topological spaces, we have:
\begin{align*}
   \mathcal{T}_1\searrow(\mathcal{T}_2\searrow \mathcal{T}_3)&=\sum_{v\in X_3}\mathcal{T}_1 \searrow(\mathcal{T}_2\searrow_v \mathcal{T}_3)\\
   &=\sum_{u\in X_2 \sqcup X_3}\sum_{v\in X_3}\mathcal{T}_1 \searrow_u(\mathcal{T}_2\searrow_v \mathcal{T}_3)\\
   &=\sum_{u\in X_2}\sum_{v\in X_3}\mathcal{T}_1 \searrow_u(\mathcal{T}_2\searrow_v \mathcal{T}_3)\\
   &\hspace{0.5cm}+\sum_{u\in X_3}\sum_{v\in X_3}\mathcal{T}_1 \searrow_u(\mathcal{T}_2\searrow_v \mathcal{T}_3).
\end{align*}
On the other hand we have:
\begin{align*}
   (\mathcal{T}_1\searrow \mathcal{T}_2)\searrow \mathcal{T}_3&= \sum_{r\in X_2}(\mathcal{T}_1\searrow_r \mathcal{T}_2)\searrow \mathcal{T}_3\\
   &=\sum_{s\in X_3}\sum_{r\in X_2}(\mathcal{T}_1\searrow_r \mathcal{T}_2)\searrow_s \mathcal{T}_3.
\end{align*}
Then
$$\mathcal{T}_1\searrow(\mathcal{T}_2\searrow \mathcal{T}_3)-(\mathcal{T}_1\searrow \mathcal{T}_2)\searrow \mathcal{T}_3=\sum_{u,\, v\in X_3}\mathcal{T}_1 \searrow_u(\mathcal{T}_2\searrow_v \mathcal{T}_3).$$
Which is symmetric on $\mathcal{T}_1$ and $\mathcal{T}_2$. Then we obtain:
$$\mathcal{T}_1\searrow(\mathcal{T}_2\searrow \mathcal{T}_3)-(\mathcal{T}_1\searrow \mathcal{T}_2)\searrow \mathcal{T}_3=\mathcal{T}_2\searrow(\mathcal{T}_1\searrow \mathcal{T}_3)-(\mathcal{T}_2\searrow \mathcal{T}_1)\searrow \mathcal{T}_3.$$
Consequently, $(\mathbb{V},\searrow)$ is a twisted pre-Lie algebra, thus yielding a pre-Lie algebra structure on $\overline{\mathcal{K}}(\mathbb{T})$.
\end{proof}
%%%%%%%%%%%%
%%%%%%
	We showed that $(\mathbb{V},\searrow)$ is a twisted pre-Lie algebra, so we consider the Hopf symmetric algebra $\mathcal{H}'=S(\mathbb{V})$ equipped with its usual unshuffling coproduct $\Delta_{unsh}$. We extend the product $\searrow$ to $\mathbb{T}$
by using Definition \ref{def.} and we define a product $\star$ on $\mathbb{T}$ by: For any pair $X_1, X_2$ of finite sets
\begin{eqnarray*}
		\star:\mathbb{T}_{X_1}\otimes\mathbb{T}_{X_2}&\longrightarrow& \mathbb{T}_{X_1\sqcup X_2}\\
		(\mathcal{T}_1, \mathcal{T}_2)&\longmapsto& \sum_{\mathcal{T}_1}\mathcal{T}^{(1)}_1(\mathcal{T}^{(2)}_1\searrow \mathcal{T}_2).
	\end{eqnarray*}
By construction, the space $(\mathcal{H}', \star, \Delta_{unsh})$ is a cocommutative twisted Hopf algebra.
\begin{remark}
The species of finite connected posets (i.e. finite connected $T_0$ topological spaces) is a twisted pre-Lie subalgebra of $(\mathbb{V}, \searrow)$, and the species of finite posets is a Hopf subalgebra of $\mathcal{H}'$.
\end{remark}
\begin{example}
\begin{align*}
  (\fcolorbox{white}{white}{
 \scalebox{0.7}{
  \begin{picture}(15,17) (216,-264)
    \SetWidth{0.0}
    \SetColor{Black}
    \Vertex(210,-260){2}
    \Vertex(226,-260){2}
    \Vertex(218,-249){2}
    \SetWidth{1.0}
    \Line(210,-261)(219,-248)
    \Line(226,-260)(218,-250)
    \Vertex(233,-260){2}
  \end{picture}
}})\searrow \fcolorbox{white}{white}{
\scalebox{0.7}{
  \begin{picture}(-6,17) (242,-291)
    \SetWidth{1.0}
    \SetColor{Black}
    \Line(235,-278)(235,-288)
    \SetWidth{0.0}
    \Vertex(235,-288){2}
    \SetWidth{1.0}
    \Vertex(235,-277){2}
  \end{picture}
}}&=\fcolorbox{white}{white}{
 \scalebox{0.7}{
  \begin{picture}(7,17) (216,-264)
    \SetWidth{0.0}
    \SetColor{Black}
    \Vertex(210,-260){2}
    \Vertex(226,-260){2}
    \Vertex(218,-249){2}
    \SetWidth{1.0}
    \Line(210,-261)(219,-248)
    \Line(226,-260)(218,-250)
  \end{picture}
}}\searrow (\fcolorbox{white}{white}{
\scalebox{0.7}{
  \begin{picture}(-10,7) (215,-286)
    \SetWidth{1.0}
    \SetColor{Black}
    \Vertex(208,-282){2}
  \end{picture}
}}\searrow  \fcolorbox{white}{white}{
\scalebox{0.7}{
  \begin{picture}(-6,17) (242,-291)
    \SetWidth{1.0}
    \SetColor{Black}
    \Line(235,-278)(235,-288)
    \SetWidth{0.0}
    \Vertex(235,-288){2}
    \SetWidth{1.0}
    \Vertex(235,-277){2}
  \end{picture}
}})-(\fcolorbox{white}{white}{
 \scalebox{0.7}{
  \begin{picture}(7,17) (216,-264)
    \SetWidth{0.0}
    \SetColor{Black}
    \Vertex(210,-260){2}
    \Vertex(226,-260){2}
    \Vertex(218,-249){2}
    \SetWidth{1.0}
    \Line(210,-261)(219,-248)
    \Line(226,-260)(218,-250)
  \end{picture}
}}\searrow \fcolorbox{white}{white}{
\scalebox{0.7}{
  \begin{picture}(-6,7) (215,-286)
    \SetWidth{1.0}
    \SetColor{Black}
    \Vertex(208,-282){2}
  \end{picture}
}})\searrow  \fcolorbox{white}{white}{
\scalebox{0.7}{
  \begin{picture}(-6,17) (242,-291)
    \SetWidth{1.0}
    \SetColor{Black}
    \Line(235,-278)(235,-288)
    \SetWidth{0.0}
    \Vertex(235,-288){2}
    \SetWidth{1.0}
    \Vertex(235,-277){2}
  \end{picture}
}}\\
&=\fcolorbox{white}{white}{
 \scalebox{0.7}{
  \begin{picture}(7,17) (216,-264)
    \SetWidth{0.0}
    \SetColor{Black}
    \Vertex(210,-260){2}
    \Vertex(226,-260){2}
    \Vertex(218,-249){2}
    \SetWidth{1.0}
    \Line(210,-261)(219,-248)
    \Line(226,-260)(218,-250)
  \end{picture}
}}\searrow ( \fcolorbox{white}{white}{
 \scalebox{0.7}{
  \begin{picture}(4,30) (285,-276)
    \SetWidth{0.0}
    \SetColor{Black}
    \Vertex(284,-261){2}
    \Vertex(284,-274){2}
    \SetWidth{1.0}
    \Line(284,-274)(284,-261)
    \Vertex(284,-249){2}
    \Line(284,-261)(284,-249)
  \end{picture}
}}+ \fcolorbox{white}{white}{
\scalebox{0.7}{
  \begin{picture}(13,19) (217,-274)
    \SetWidth{1.0}
    \SetColor{Black}
    \Vertex(219,-270){2}
    \Vertex(210,-258){2}
    \Vertex(226,-258){2}
    \Line(218,-271)(226,-257)
    \Line(220,-270)(209,-258)
  \end{picture}
}})- \fcolorbox{white}{white}{
\scalebox{0.7}{
  \begin{picture}(12,30) (281,-265)
    \SetWidth{0.0}
    \SetColor{Black}
    \Vertex(284,-261){2}
    \Vertex(293,-248){2}
    \Vertex(274,-248){2}
    \Vertex(284,-237){2}
    \SetWidth{1.0}
    \Line(284,-261)(292,-249)
    \Line(284,-261)(274,-248)
    \Line(275,-248)(284,-237)
    \Line(293,-248)(284,-237)
  \end{picture}
}}\searrow \fcolorbox{white}{white}{
\scalebox{0.7}{
  \begin{picture}(-6,17) (242,-291)
    \SetWidth{1.0}
    \SetColor{Black}
    \Line(235,-278)(235,-288)
    \SetWidth{0.0}
    \Vertex(235,-288){2}
    \SetWidth{1.0}
    \Vertex(235,-277){2}
  \end{picture}
}}\\
&=\fcolorbox{white}{white}{
\scalebox{0.7}{
  \begin{picture}(18,56) (278,-265)
    \SetWidth{0.0}
    \SetColor{Black}
    \Vertex(284,-248){2}
    \Vertex(284,-235){2}
    \SetWidth{1.0}
    \Line(284,-248)(284,-235)
    \SetWidth{0.0}
    \Vertex(293,-222){2}
    \Vertex(274,-222){2}
    \Vertex(284,-211){2}
    \SetWidth{1.0}
    \Line(284,-235)(292,-223)
    \Line(284,-235)(274,-222)
    \Line(275,-222)(284,-211)
    \Line(293,-222)(284,-211)
    \Vertex(284,-261){2}
    \Line(284,-261)(284,-248)
  \end{picture}
}}+  \fcolorbox{white}{white}{
\scalebox{0.7}{
  \begin{picture}(26,43) (279,-265)
    \SetWidth{1.0}
    \SetColor{Black}
    \Vertex(284,-261){2}
    \Vertex(284,-248){2}
    \Line(284,-261)(284,-248)
    \Vertex(293,-235){2}
    \Vertex(274,-235){2}
    \Vertex(284,-224){2}
    \Line(284,-248)(292,-236)
    \Line(284,-248)(274,-235)
    \Line(275,-235)(284,-224)
    \Line(293,-235)(284,-224)
    \Vertex(303,-235){2}
    \Line(285,-248)(303,-235)
  \end{picture}
}}+\fcolorbox{white}{white}{
\scalebox{0.7}{
  \begin{picture}(30,31) (276,-264)
    \SetWidth{0.0}
    \SetColor{Black}
    \Vertex(284,-260){2}
    \Vertex(293,-247){2}
    \Vertex(274,-247){2}
    \Vertex(284,-236){2}
    \SetWidth{1.0}
    \Line(284,-260)(292,-248)
    \Line(284,-260)(274,-247)
    \Line(275,-247)(284,-236)
    \Line(293,-247)(284,-236)
    \Vertex(303,-247){2}
    \Vertex(303,-236){2}
    \Line(285,-260)(303,-247)
    \Line(303,-247)(303,-237)
  \end{picture}
}} +\fcolorbox{white}{white}{
\scalebox{0.7}{
  \begin{picture}(44,30) (264,-265)
    \SetWidth{1.0}
    \SetColor{Black}
    \Vertex(284,-261){2}
    \Vertex(293,-248){2}
    \Vertex(274,-248){2}
    \Vertex(284,-237){2}
    \Line(284,-261)(292,-249)
    \Line(284,-261)(274,-248)
    \Line(275,-248)(284,-237)
    \Line(293,-248)(284,-237)
    \Vertex(303,-248){2.236}
    \Line(285,-261)(303,-248)
    \Vertex(263,-248){2}
    \Line(284,-262)(264,-249)
  \end{picture}
}} +2 \fcolorbox{white}{white}{
\scalebox{0.7}{
  \begin{picture}(21,43) (278,-265)
    \SetWidth{0.0}
    \SetColor{Black}
    \Vertex(284,-261){2}
    \Vertex(284,-248){2}
    \SetWidth{1.0}
    \Line(284,-261)(284,-248)
    \SetWidth{0.0}
    \Vertex(293,-235){2}
    \Vertex(274,-235){2}
    \Vertex(284,-224){2}
    \SetWidth{1.0}
    \Line(284,-248)(292,-236)
    \Line(284,-248)(274,-235)
    \Line(275,-235)(284,-224)
    \Line(293,-235)(284,-224)
    \Vertex(294,-249){2}
    \Line(284,-261)(294,-249)
  \end{picture}
}}\\
&\hspace{1cm}-\fcolorbox{white}{white}{
\scalebox{0.7}{
  \begin{picture}(16,56) (278,-265)
    \SetWidth{0.0}
    \SetColor{Black}
    \Vertex(284,-248){2}
    \Vertex(284,-235){2}
    \SetWidth{1.0}
    \Line(284,-248)(284,-235)
    \SetWidth{0.0}
    \Vertex(293,-222){2}
    \Vertex(274,-222){2}
    \Vertex(284,-211){2}
    \SetWidth{1.0}
    \Line(284,-235)(292,-223)
    \Line(284,-235)(274,-222)
    \Line(275,-222)(284,-211)
    \Line(293,-222)(284,-211)
    \Vertex(284,-261){2}
    \Line(284,-261)(284,-248)
  \end{picture}
}}-\fcolorbox{white}{white}{
\scalebox{0.7}{
  \begin{picture}(21,43) (278,-265)
    \SetWidth{0.0}
    \SetColor{Black}
    \Vertex(284,-261){2}
    \Vertex(284,-248){2}
    \SetWidth{1.0}
    \Line(284,-261)(284,-248)
    \SetWidth{0.0}
    \Vertex(293,-235){2}
    \Vertex(274,-235){2}
    \Vertex(284,-224){2}
    \SetWidth{1.0}
    \Line(284,-248)(292,-236)
    \Line(284,-248)(274,-235)
    \Line(275,-235)(284,-224)
    \Line(293,-235)(284,-224)
    \Vertex(294,-249){2}
    \Line(284,-261)(294,-249)
  \end{picture}
}}\\
&= \fcolorbox{white}{white}{
\scalebox{0.7}{
  \begin{picture}(26,43) (279,-265)
    \SetWidth{1.0}
    \SetColor{Black}
    \Vertex(284,-261){2}
    \Vertex(284,-248){2}
    \Line(284,-261)(284,-248)
    \Vertex(293,-235){2}
    \Vertex(274,-235){2}
    \Vertex(284,-224){2}
    \Line(284,-248)(292,-236)
    \Line(284,-248)(274,-235)
    \Line(275,-235)(284,-224)
    \Line(293,-235)(284,-224)
    \Vertex(303,-235){2}
    \Line(285,-248)(303,-235)
  \end{picture}
}}+\fcolorbox{white}{white}{
\scalebox{0.7}{
  \begin{picture}(30,31) (276,-264)
    \SetWidth{0.0}
    \SetColor{Black}
    \Vertex(284,-260){2}
    \Vertex(293,-247){2}
    \Vertex(274,-247){2}
    \Vertex(284,-236){2}
    \SetWidth{1.0}
    \Line(284,-260)(292,-248)
    \Line(284,-260)(274,-247)
    \Line(275,-247)(284,-236)
    \Line(293,-247)(284,-236)
    \Vertex(303,-247){2}
    \Vertex(303,-236){2}
    \Line(285,-260)(303,-247)
    \Line(303,-247)(303,-237)
  \end{picture}
}} +\fcolorbox{white}{white}{
\scalebox{0.7}{
  \begin{picture}(44,30) (264,-265)
    \SetWidth{1.0}
    \SetColor{Black}
    \Vertex(284,-261){2}
    \Vertex(293,-248){2}
    \Vertex(274,-248){2}
    \Vertex(284,-237){2}
    \Line(284,-261)(292,-249)
    \Line(284,-261)(274,-248)
    \Line(275,-248)(284,-237)
    \Line(293,-248)(284,-237)
    \Vertex(303,-248){2.236}
    \Line(285,-261)(303,-248)
    \Vertex(263,-248){2}
    \Line(284,-262)(264,-249)
  \end{picture}
}} +\fcolorbox{white}{white}{
\scalebox{0.7}{
  \begin{picture}(21,43) (278,-265)
    \SetWidth{0.0}
    \SetColor{Black}
    \Vertex(284,-261){2}
    \Vertex(284,-248){2}
    \SetWidth{1.0}
    \Line(284,-261)(284,-248)
    \SetWidth{0.0}
    \Vertex(293,-235){2}
    \Vertex(274,-235){2}
    \Vertex(284,-224){2}
    \SetWidth{1.0}
    \Line(284,-248)(292,-236)
    \Line(284,-248)(274,-235)
    \Line(275,-235)(284,-224)
    \Line(293,-235)(284,-224)
    \Vertex(294,-249){2}
    \Line(284,-261)(294,-249)
  \end{picture}
}}
\end{align*}

%%%%%%%%%%%%
\begin{align*}
   (\fcolorbox{white}{white}{
 \scalebox{0.7}{
  \begin{picture}(15,17) (216,-264)
    \SetWidth{0.0}
    \SetColor{Black}
    \Vertex(210,-260){2}
    \Vertex(226,-260){2}
    \Vertex(218,-249){2}
    \SetWidth{1.0}
    \Line(210,-261)(219,-248)
    \Line(226,-260)(218,-250)
    \Vertex(233,-260){2}
  \end{picture}
}})\star \fcolorbox{white}{white}{
\scalebox{0.7}{
  \begin{picture}(-12,17) (243,-291)
    \SetWidth{1.0}
    \SetColor{Black}
    \Line(235,-278)(235,-288)
    \SetWidth{0.0}
    \Vertex(235,-288){2}
    \SetWidth{1.0}
    \Vertex(235,-277){2}
  \end{picture}
}}&=(\fcolorbox{white}{white}{
 \scalebox{0.7}{
  \begin{picture}(15,17) (216,-264)
    \SetWidth{0.0}
    \SetColor{Black}
    \Vertex(210,-260){2}
    \Vertex(226,-260){2}
    \Vertex(218,-249){2}
    \SetWidth{1.0}
    \Line(210,-261)(219,-248)
    \Line(226,-260)(218,-250)
    \Vertex(233,-260){2}
  \end{picture}
}})\searrow \fcolorbox{white}{white}{
\scalebox{0.7}{
  \begin{picture}(-6,17) (242,-291)
    \SetWidth{1.0}
    \SetColor{Black}
    \Line(235,-278)(235,-288)
    \SetWidth{0.0}
    \Vertex(235,-288){2}
    \SetWidth{1.0}
    \Vertex(235,-277){2}
  \end{picture}
}}+\fcolorbox{white}{white}{
 \scalebox{0.7}{
  \begin{picture}(23,17) (216,-264)
    \SetWidth{0.0}
    \SetColor{Black}
    \Vertex(210,-260){2}
    \Vertex(226,-260){2}
    \Vertex(218,-249){2}
    \SetWidth{1.0}
    \Line(210,-261)(219,-248)
    \Line(226,-260)(218,-250)
    \Vertex(233,-260){2}
        \Line(240,-249)(240,-260)
    \SetWidth{0.0}
    \Vertex(240,-260){2}
    \SetWidth{1.0}
    \Vertex(240,-249){2}
  \end{picture}
}} +\fcolorbox{white}{white}{
 \scalebox{0.7}{
  \begin{picture}(10,17) (216,-264)
    \SetWidth{0.0}
    \SetColor{Black}
    \Vertex(210,-260){2}
    \Vertex(226,-260){2}
    \Vertex(218,-249){2}
    \SetWidth{1.0}
    \Line(210,-261)(219,-248)
    \Line(226,-260)(218,-250)
  \end{picture}
}}(\fcolorbox{white}{white}{
\scalebox{0.7}{
  \begin{picture}(-12,7) (218,-286)
    \SetWidth{1.0}
    \SetColor{Black}
    \Vertex(208,-282){2}
  \end{picture}
}} \searrow \fcolorbox{white}{white}{
\scalebox{0.7}{
  \begin{picture}(-3,17) (242,-291)
    \SetWidth{1.0}
    \SetColor{Black}
    \Line(235,-278)(235,-288)
    \SetWidth{0.0}
    \Vertex(235,-288){2}
    \SetWidth{1.0}
    \Vertex(235,-277){2}
  \end{picture}
}})+ \fcolorbox{white}{white}{
\scalebox{0.7}{
  \begin{picture}(-6,7) (215,-286)
    \SetWidth{1.0}
    \SetColor{Black}
    \Vertex(208,-282){2}
  \end{picture}
}} (\fcolorbox{white}{white}{
 \scalebox{0.7}{
  \begin{picture}(10,17) (213,-264)
    \SetWidth{0.0}
    \SetColor{Black}
    \Vertex(210,-260){2}
    \Vertex(226,-260){2}
    \Vertex(218,-249){2}
    \SetWidth{1.0}
    \Line(210,-261)(219,-248)
    \Line(226,-260)(218,-250)
  \end{picture}
}} \searrow \fcolorbox{white}{white}{
\scalebox{0.7}{
  \begin{picture}(-2,17) (241,-291)
    \SetWidth{1.0}
    \SetColor{Black}
    \Line(235,-278)(235,-288)
    \SetWidth{0.0}
    \Vertex(235,-288){2}
    \SetWidth{1.0}
    \Vertex(235,-277){2}
  \end{picture}
}} )\\ 
&= \fcolorbox{white}{white}{
\scalebox{0.7}{
  \begin{picture}(26,43) (279,-265)
    \SetWidth{1.0}
    \SetColor{Black}
    \Vertex(284,-261){2}
    \Vertex(284,-248){2}
    \Line(284,-261)(284,-248)
    \Vertex(293,-235){2}
    \Vertex(274,-235){2}
    \Vertex(284,-224){2}
    \Line(284,-248)(292,-236)
    \Line(284,-248)(274,-235)
    \Line(275,-235)(284,-224)
    \Line(293,-235)(284,-224)
    \Vertex(303,-235){2}
    \Line(285,-248)(303,-235)
  \end{picture}
}}+ \fcolorbox{white}{white}{
\scalebox{0.7}{
  \begin{picture}(44,30) (264,-265)
    \SetWidth{1.0}
    \SetColor{Black}
    \Vertex(284,-261){2}
    \Vertex(293,-248){2}
    \Vertex(274,-248){2}
    \Vertex(284,-237){2}
    \Line(284,-261)(292,-249)
    \Line(284,-261)(274,-248)
    \Line(275,-248)(284,-237)
    \Line(293,-248)(284,-237)
    \Vertex(303,-248){2.236}
    \Line(285,-261)(303,-248)
    \Vertex(263,-248){2}
    \Line(284,-262)(264,-249)
  \end{picture}
}} + \fcolorbox{white}{white}{
\scalebox{0.7}{
  \begin{picture}(21,43) (278,-265)
    \SetWidth{0.0}
    \SetColor{Black}
    \Vertex(284,-261){2}
    \Vertex(284,-248){2}
    \SetWidth{1.0}
    \Line(284,-261)(284,-248)
    \SetWidth{0.0}
    \Vertex(293,-235){2}
    \Vertex(274,-235){2}
    \Vertex(284,-224){2}
    \SetWidth{1.0}
    \Line(284,-248)(292,-236)
    \Line(284,-248)(274,-235)
    \Line(275,-235)(284,-224)
    \Line(293,-235)(284,-224)
    \Vertex(294,-249){2}
    \Line(284,-261)(294,-249)
  \end{picture}
}} + \fcolorbox{white}{white}{
\scalebox{0.7}{
  \begin{picture}(30,31) (276,-264)
    \SetWidth{0.0}
    \SetColor{Black}
    \Vertex(284,-260){2}
    \Vertex(293,-247){2}
    \Vertex(274,-247){2}
    \Vertex(284,-236){2}
    \SetWidth{1.0}
    \Line(284,-260)(292,-248)
    \Line(284,-260)(274,-247)
    \Line(275,-247)(284,-236)
    \Line(293,-247)(284,-236)
    \Vertex(303,-247){2}
    \Vertex(303,-236){2}
    \Line(285,-260)(303,-247)
    \Line(303,-247)(303,-237)
  \end{picture}
}} + \fcolorbox{white}{white}{
 \scalebox{0.7}{
  \begin{picture}(23,17) (216,-264)
    \SetWidth{0.0}
    \SetColor{Black}
    \Vertex(210,-260){2}
    \Vertex(226,-260){2}
    \Vertex(218,-249){2}
    \SetWidth{1.0}
    \Line(210,-261)(219,-248)
    \Line(226,-260)(218,-250)
    \Vertex(233,-260){2}
        \Line(240,-249)(240,-260)
    \SetWidth{0.0}
    \Vertex(240,-260){2}
    \SetWidth{1.0}
    \Vertex(240,-249){2}
  \end{picture}
}}+  \fcolorbox{white}{white}{
 \scalebox{0.7}{
  \begin{picture}(15,17) (216,-264)
    \SetWidth{0.0}
    \SetColor{Black}
    \Vertex(210,-260){2}
    \Vertex(226,-260){2}
    \Vertex(218,-249){2}
    \SetWidth{1.0}
    \Line(210,-261)(219,-248)
    \Line(226,-260)(218,-250)
  \end{picture}
}} ( \fcolorbox{white}{white}{
 \scalebox{0.7}{
  \begin{picture}(4,30) (285,-276)
    \SetWidth{0.0}
    \SetColor{Black}
    \Vertex(284,-261){2}
    \Vertex(284,-274){2}
    \SetWidth{1.0}
    \Line(284,-274)(284,-261)
    \Vertex(284,-249){2}
    \Line(284,-261)(284,-249)
  \end{picture}
}}+ \fcolorbox{white}{white}{
\scalebox{0.7}{
  \begin{picture}(13,19) (217,-274)
    \SetWidth{1.0}
    \SetColor{Black}
    \Vertex(219,-270){2}
    \Vertex(210,-258){2}
    \Vertex(226,-258){2}
    \Line(218,-271)(226,-257)
    \Line(220,-270)(209,-258)
  \end{picture}
}})\\
&\hspace{1cm}+ \fcolorbox{white}{white}{
\scalebox{0.7}{
  \begin{picture}(-9,7) (218,-286)
    \SetWidth{1.0}
    \SetColor{Black}
    \Vertex(208,-282){2}
  \end{picture}
}} ( \fcolorbox{white}{white}{
\scalebox{0.7}{
  \begin{picture}(33,30) (275,-265)
    \SetWidth{0.0}
    \SetColor{Black}
    \Vertex(284,-261){2}
    \Vertex(293,-248){2}
    \Vertex(274,-248){2}
    \Vertex(284,-237){2}
    \SetWidth{1.0}
    \Line(284,-261)(292,-249)
    \Line(284,-261)(274,-248)
    \Line(275,-248)(284,-237)
    \Line(293,-248)(284,-237)
    \Vertex(303,-248){2}
    \Line(285,-261)(303,-248)
  \end{picture}
}}+ \fcolorbox{white}{white}{
\scalebox{0.7}{
  \begin{picture}(23,41) (277,-265)
    \SetWidth{0.0}
    \SetColor{Black}
    \Vertex(284,-250){2}
    \Vertex(293,-237){2}
    \Vertex(274,-237){2}
    \Vertex(284,-226){2}
    \SetWidth{1.0}
    \Line(284,-250)(292,-238)
    \Line(284,-250)(274,-237)
    \Line(275,-237)(284,-226)
    \Line(293,-237)(284,-226)
    \Vertex(284,-263){2}
    \Line(284,-263)(284,-250)
  \end{picture}
}})\\
&= \fcolorbox{white}{white}{
\scalebox{0.7}{
  \begin{picture}(26,43) (279,-265)
    \SetWidth{1.0}
    \SetColor{Black}
    \Vertex(284,-261){2}
    \Vertex(284,-248){2}
    \Line(284,-261)(284,-248)
    \Vertex(293,-235){2}
    \Vertex(274,-235){2}
    \Vertex(284,-224){2}
    \Line(284,-248)(292,-236)
    \Line(284,-248)(274,-235)
    \Line(275,-235)(284,-224)
    \Line(293,-235)(284,-224)
    \Vertex(303,-235){2}
    \Line(285,-248)(303,-235)
  \end{picture}
}}+ \fcolorbox{white}{white}{
\scalebox{0.7}{
  \begin{picture}(44,30) (264,-265)
    \SetWidth{1.0}
    \SetColor{Black}
    \Vertex(284,-261){2}
    \Vertex(293,-248){2}
    \Vertex(274,-248){2}
    \Vertex(284,-237){2}
    \Line(284,-261)(292,-249)
    \Line(284,-261)(274,-248)
    \Line(275,-248)(284,-237)
    \Line(293,-248)(284,-237)
    \Vertex(303,-248){2.236}
    \Line(285,-261)(303,-248)
    \Vertex(263,-248){2}
    \Line(284,-262)(264,-249)
  \end{picture}
}} + \fcolorbox{white}{white}{
\scalebox{0.7}{
  \begin{picture}(21,43) (278,-265)
    \SetWidth{0.0}
    \SetColor{Black}
    \Vertex(284,-261){2}
    \Vertex(284,-248){2}
    \SetWidth{1.0}
    \Line(284,-261)(284,-248)
    \SetWidth{0.0}
    \Vertex(293,-235){2}
    \Vertex(274,-235){2}
    \Vertex(284,-224){2}
    \SetWidth{1.0}
    \Line(284,-248)(292,-236)
    \Line(284,-248)(274,-235)
    \Line(275,-235)(284,-224)
    \Line(293,-235)(284,-224)
    \Vertex(294,-249){2}
    \Line(284,-261)(294,-249)
  \end{picture}
}} + \fcolorbox{white}{white}{
\scalebox{0.7}{
  \begin{picture}(30,31) (276,-264)
    \SetWidth{0.0}
    \SetColor{Black}
    \Vertex(284,-260){2}
    \Vertex(293,-247){2}
    \Vertex(274,-247){2}
    \Vertex(284,-236){2}
    \SetWidth{1.0}
    \Line(284,-260)(292,-248)
    \Line(284,-260)(274,-247)
    \Line(275,-247)(284,-236)
    \Line(293,-247)(284,-236)
    \Vertex(303,-247){2}
    \Vertex(303,-236){2}
    \Line(285,-260)(303,-247)
    \Line(303,-247)(303,-237)
  \end{picture}
}} + \fcolorbox{white}{white}{
 \scalebox{0.7}{
  \begin{picture}(23,17) (216,-264)
    \SetWidth{0.0}
    \SetColor{Black}
    \Vertex(210,-260){2}
    \Vertex(226,-260){2}
    \Vertex(218,-249){2}
    \SetWidth{1.0}
    \Line(210,-261)(219,-248)
    \Line(226,-260)(218,-250)
    \Vertex(233,-260){2}
        \Line(240,-249)(240,-260)
    \SetWidth{0.0}
    \Vertex(240,-260){2}
    \SetWidth{1.0}
    \Vertex(240,-249){2}
  \end{picture}
}}+\fcolorbox{white}{white}{
 \scalebox{0.7}{
  \begin{picture}(10,17) (216,-264)
    \SetWidth{0.0}
    \SetColor{Black}
    \Vertex(210,-260){2}
    \Vertex(226,-260){2}
    \Vertex(218,-249){2}
    \SetWidth{1.0}
    \Line(210,-261)(219,-248)
    \Line(226,-260)(218,-250)
  \end{picture}
}}  \fcolorbox{white}{white}{
 \scalebox{0.7}{
  \begin{picture}(-4,30) (292,-276)
    \SetWidth{0.0}
    \SetColor{Black}
    \Vertex(284,-261){2}
    \Vertex(284,-272){2}
    \SetWidth{1.0}
    \Line(284,-272)(284,-261)
    \Vertex(284,-249){2}
    \Line(284,-261)(284,-249)
  \end{picture}
}}+ \fcolorbox{white}{white}{
 \scalebox{0.7}{
  \begin{picture}(8,17) (216,-264)
    \SetWidth{0.0}
    \SetColor{Black}
    \Vertex(210,-260){2}
    \Vertex(226,-260){2}
    \Vertex(218,-249){2}
    \SetWidth{1.0}
    \Line(210,-261)(219,-248)
    \Line(226,-260)(218,-250)
  \end{picture}
}} \fcolorbox{white}{white}{
\scalebox{0.7}{
  \begin{picture}(13,19) (220,-274)
    \SetWidth{1.0}
    \SetColor{Black}
    \Vertex(219,-270){2}
    \Vertex(210,-258){2}
    \Vertex(226,-258){2}
    \Line(218,-271)(226,-257)
    \Line(220,-270)(209,-258)
  \end{picture}
}}\\
&\hspace{1cm}+  \fcolorbox{white}{white}{
\scalebox{0.7}{
  \begin{picture}(33,30) (270,-265)
    \SetWidth{0.0}
    \SetColor{Black}
    \Vertex(284,-261){2}
    \Vertex(293,-248){2}
    \Vertex(274,-248){2}
    \Vertex(284,-237){2}
    \SetWidth{1.0}
    \Line(284,-261)(292,-249)
    \Line(284,-261)(274,-248)
    \Line(275,-248)(284,-237)
    \Line(293,-248)(284,-237)
    \Vertex(303,-248){2}
    \Line(285,-261)(303,-248)
    \Vertex(264,-261){2}
  \end{picture}
}}+ \fcolorbox{white}{white}{
\scalebox{0.7}{
  \begin{picture}(23,41) (268,-265)
    \SetWidth{0.0}
    \SetColor{Black}
    \Vertex(284,-250){2}
    \Vertex(293,-237){2}
    \Vertex(274,-237){2}
    \Vertex(284,-226){2}
    \SetWidth{1.0}
    \Line(284,-250)(292,-238)
    \Line(284,-250)(274,-237)
    \Line(275,-237)(284,-226)
    \Line(293,-237)(284,-226)
    \Vertex(284,-263){2}
    \Line(284,-263)(284,-250)
    \Vertex(267,-263){2}
  \end{picture}
}}
\end{align*}
\end{example}
%%%%%%%%%%%%%%%%%%
%%%%%%%%%%%%%
\section{Bialgebras of finite topological spaces}\label{bialgebras}
\subsection{ A twisted bialgebra of finite topological spaces}
Let $X$ be any finite set, we define the coproduct $\Delta_{\searrow}$ by:
	\begin{eqnarray*}
		\Delta_{\searrow}:\mathbb{T}_X&\longrightarrow& (\mathbb{T}\otimes \mathbb{T})_X=\bigoplus_{Y\sqcup Z= X}\mathbb{T}_{Y}\otimes\mathbb{T}_{Z}\\
		\mathcal{T}&\longmapsto& \sum_{Y \overline{\in}  \mathcal{T}} \mathcal{T}_{|Y} \otimes \mathcal{T}_{|X\backslash Y}.
	\end{eqnarray*}
	Where $Y \overline{\in}  \mathcal{T}$, stands for
\begin{itemize}
    \item $Y\in \mathcal{T}$,
    \item $\mathcal{T}_{|Y}=\mathcal{T}_1...\mathcal{T}_n$, such that for all $i\in \{1,..., n\}, \mathcal{T}_i$ connected and \big($\hbox{min}\mathcal{T}_i=(\hbox{min}\mathcal{T})\cap \mathcal{T}_i$, or there is a single common ancestor $x_i \in \overline{X\backslash Y}$ to $\hbox{min}\mathcal{T}_i$\big), where $\overline{X\backslash Y}=(X\backslash Y)/\sim_{\mathcal{T}_{|X\backslash Y}}$.
\end{itemize}
\begin{example}
 
 $\Delta_{\searrow}(\fcolorbox{white}{white}{
 \scalebox{0.7}{
  \begin{picture}(8,17) (217,-263)
    \SetWidth{1.0}
    \SetColor{Black}
    \Vertex(210,-260){2}
    \Vertex(226,-260){2}
    \Vertex(218,-249){2}
    \Line(210,-261)(219,-248)
    \Line(226,-260)(218,-250)
  \end{picture}
}})=\hspace*{-0.09cm} \fcolorbox{white}{white}{
\scalebox{0.7}{
  \begin{picture}(8,17) (218,-263)
    \SetWidth{1.0}
    \SetColor{Black}
    \Vertex(210,-260){2}
    \Vertex(226,-260){2}
    \Vertex(218,-249){2}
    \Line(210,-261)(219,-248)
    \Line(226,-260)(218,-250)
  \end{picture}
}}\hspace*{-0.09cm} \otimes \mathbf{1}+\mathbf{1}\otimes \hspace*{-0.05cm} \fcolorbox{white}{white}{
\scalebox{0.7}{
  \begin{picture}(22,17) (218,-263)
    \SetWidth{1.0}
    \SetColor{Black}
    \Vertex(210,-260){2}
    \Vertex(226,-260){2}
    \Vertex(218,-249){2}
    \Line(210,-261)(219,-248)
    \Line(226,-260)(218,-250)
  \end{picture}
}}$\\
\hspace*{2.57cm}$\Delta_{\searrow}(\fcolorbox{white}{white}{
\scalebox{0.7}{
  \begin{picture}(8,19) (218,-274)
    \SetWidth{1.0}
    \SetColor{Black}
    \Vertex(219,-270){2}
    \Vertex(210,-258){2}
    \Vertex(226,-258){2}
    \Line(218,-271)(226,-257)
    \Line(220,-270)(209,-258)
  \end{picture}
}})=\fcolorbox{white}{white}{
\scalebox{0.7}{
  \begin{picture}(-6,7) (216,-286)
    \SetWidth{1.0}
    \SetColor{Black}
    \Vertex(208,-282){2}
    \Vertex(214,-282){2}
  \end{picture}
}}\otimes \fcolorbox{white}{white}{
\scalebox{0.7}{
  \begin{picture}(-9,7) (218,-286)
    \SetWidth{1.0}
    \SetColor{Black}
    \Vertex(208,-282){2}
  \end{picture}
}}+ \fcolorbox{white}{white}{
\scalebox{0.7}{
  \begin{picture}(5,19) (218,-274)
    \SetWidth{1.0}
    \SetColor{Black}
    \Vertex(219,-270){2}
    \Vertex(210,-258){2}
    \Vertex(226,-258){2}
    \Line(218,-271)(226,-257)
    \Line(220,-270)(209,-258)
  \end{picture}
}} \otimes \mathbf{1}+\mathbf{1}\otimes  \fcolorbox{white}{white}{
\scalebox{0.7}{
  \begin{picture}(27,19) (218,-274)
    \SetWidth{1.0}
    \SetColor{Black}
    \Vertex(219,-270){2}
    \Vertex(210,-258){2}
    \Vertex(226,-258){2}
    \Line(218,-271)(226,-257)
    \Line(220,-270)(209,-258)
  \end{picture}
}} $
\end{example}
%%%%%%%%%%%%%
%%%%%%%
\begin{theorem}
		$(\mathbb{T}, m, \Delta_{\searrow})$ is a commutative connected twisted bialgebra, and $\mathcal{H}=\overline{\mathcal{K}}(\mathbb{T})$ is a commutative graded bialgebra.
	\end{theorem}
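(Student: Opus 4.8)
The plan is to verify, one by one, that $\Delta_{\searrow}$ has $\mathbf 1$ as counit, that it is coassociative, that it is a morphism for the disjoint‑union product $m$, and that $m$ is commutative and $\mathbb T$ connected; then to transport the result to $\mathcal H$ through the bosonic Fock functor $\overline{\mathcal K}$, exactly as was done in the Introduction for $(\mathbb T,m,\Delta)$. Three of these are immediate. The coproduct $\Delta_{\searrow}(\mathcal T)$ always contains the boundary terms $\mathcal T\otimes\mathbf 1$ (take $Y=X$, for which the clause $\min\mathcal T_i=(\min\mathcal T)\cap\mathcal T_i$ holds automatically) and $\mathbf 1\otimes\mathcal T$ (take $Y=\varnothing$), and no other term has $\mathbf 1$ in the first (resp.\ last) slot, so the projection onto the coefficient of the empty topology is a counit. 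Commutativity of $m$ is the evident symmetry of the disjoint‑union topology, and connectedness is $\mathbb T_{\varnothing}=\mathbf k$. So the substance of the theorem is the compatibility of $m$ with $\Delta_{\searrow}$ and the coassociativity of $\Delta_{\searrow}$.

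For the compatibility $\Delta_{\searrow}\circ m=(m\otimes m)(\mathrm{id}\otimes\tau\otimes\mathrm{id})(\Delta_{\searrow}\otimes\Delta_{\searrow})$, I would record that every ingredient of the condition ``$Y\,\overline{\in}\,\mathcal T_1\mathcal T_2$'' splits along $X_1\sqcup X_2$: an open set of $\mathcal T_1\mathcal T_2$ is $Y_1\sqcup Y_2$ with $Y_i$ open in $\mathcal T_i$; one has $\min(\mathcal T_1\mathcal T_2)=\min\mathcal T_1\sqcup\min\mathcal T_2$; each connected component of $(\mathcal T_1\mathcal T_2)_{|Y}$ lies entirely inside one $X_i$; and a common ancestor of such a component lies in the same $X_i$. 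Hence $Y\,\overline{\in}\,\mathcal T_1\mathcal T_2$ if and only if $Y_1\,\overline{\in}\,\mathcal T_1$ and $Y_2\,\overline{\in}\,\mathcal T_2$, and the sum defining $\Delta_{\searrow}(\mathcal T_1\mathcal T_2)$ factors into the product in $\mathbb T\otimes\mathbb T$ of $\Delta_{\searrow}(\mathcal T_1)$ and $\Delta_{\searrow}(\mathcal T_2)$. Once these splitting statements are in place the verification is routine.

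Coassociativity $(\Delta_{\searrow}\otimes\mathrm{id})\Delta_{\searrow}=(\mathrm{id}\otimes\Delta_{\searrow})\Delta_{\searrow}$ is the crux. Both sides are sums over nested decompositions, and the natural candidate bijection matches a pair $(Y\,\overline{\in}\,\mathcal T,\ Y'\,\overline{\in}\,\mathcal T_{|Y})$ with a pair $(Z\,\overline{\in}\,\mathcal T,\ W\,\overline{\in}\,\mathcal T_{|X\setminus Z})$ via $Z=Y'$ and $W=Y\setminus Y'$, so that $Y=Z\sqcup W$. The ``open set'' layers match because $Y'$ is an upper ideal of the upper ideal $Y$ if and only if $Y'$ is an upper ideal of $X$ and $Y\setminus Y'$ is an upper ideal of $X\setminus Y'$, which is a one‑line verification in each direction. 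The real work --- and the step I expect to be the main obstacle --- is to show that the remaining clauses of ``$\overline{\in}$'' (that the restricted topology be a disjoint union of connected pieces, each of which either keeps its minimal elements of $\mathcal T$ or admits a single common ancestor in the complement) are preserved by the two successive restrictions: one must control how the minimal elements and the common ancestors of a connected component change on passing from $\mathcal T$ to $\mathcal T_{|Y}$ and then to $\mathcal T_{|Y'}$, deal with the possibility that a common ancestor living in $X\setminus Y$ is screened off by one living in $Y\setminus Y'$, and check that uniqueness of a common ancestor survives the passage to the quotients $\overline{X\setminus Y}$, $\overline{Y\setminus Y'}$ and $\overline{X\setminus Z}$. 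This case analysis is the combinatorial heart of the proof, and it is precisely the somewhat intricate shape of the ``$\overline{\in}$'' condition that is designed to make it go through.

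A more conceptual route --- which I would in any case use as a consistency check, and which is essentially the relation between $\Delta_{\searrow}$ and $\star$ announced in the Introduction --- is to observe that, identifying $\mathbb T$ with $S(\mathbb V)$ and equipping it with the symmetric pairing for which distinct topologies on a finite set are orthogonal, the disjoint‑union product $m$ is adjoint to the unshuffling coproduct $\Delta_{unsh}$, while the bookkeeping encoded in ``$Y\,\overline{\in}\,\mathcal T$'' is exactly what makes $\Delta_{\searrow}$ adjoint to the Guin--Oudom/Schedler product $\star$ attached to the twisted pre‑Lie algebra $(\mathbb V,\searrow)$ --- the ``disjoint'' components of $\mathcal T_{|Y}$ coming from the first tensor leg of $\Delta_{unsh}$ and the ``grafted'' ones, those acquiring a single common ancestor in the complement, coming from $\searrow$. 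Since $(S(\mathbb V),\star,\Delta_{unsh})\cong\mathcal U(\mathbb V_{Lie})$ is a twisted Hopf algebra, its graded dual $(\mathbb T,m,\Delta_{\searrow})$ is a twisted bialgebra, commutative because $\Delta_{unsh}$ is cocommutative and connected because $\mathbb T_{\varnothing}=\mathbf k$. Finally, since $\overline{\mathcal K}$ is lax symmetric monoidal it carries this commutative connected twisted bialgebra to a commutative bialgebra on $\mathcal H=\overline{\mathcal K}(\mathbb T)$, graded by cardinality because $m$ and $\Delta_{\searrow}$ preserve underlying sets; this gives the second assertion.
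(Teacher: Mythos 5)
Your main route is essentially the paper's own proof: the paper establishes coassociativity via exactly the bijection $(Z,Y)\mapsto (Z,Y\setminus Z)$ between nested admissible pairs that you describe (with inverse $(U,W)\mapsto (U,U\sqcup W)$), and it likewise treats the compatibility of $\Delta_{\searrow}$ with $m$ as immediate. The step you flag as the combinatorial heart --- verifying that the clauses of $\overline{\in}$ (connected components keeping their minimal elements of $\mathcal T$, or acquiring a single common ancestor in the quotient of the complement) survive the two successive restrictions --- is precisely the step the paper dispatches by drawing two figures and asserting that membership in $B$ (resp.\ $A$) is ``graphically clear''; so you have correctly located where the real content lies, and neither your sketch nor the paper writes that case analysis out in full. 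Your second, duality-based route is genuinely different from anything in this proof: the adjointness of $\Delta_{\searrow}$ to the Guin--Oudom product $\star$ with respect to the symmetry-factor pairing is exactly the theorem the paper proves later, in Section 4.2, by orbit--stabilizer counting and independently of coassociativity; since that pairing is diagonal with nonzero entries $\sigma(\mathcal T)$ it is nondegenerate, so coassociativity of $\Delta_{\searrow}$ would indeed follow from associativity of $\star$. That route buys a conceptual explanation of why the intricate $\overline{\in}$ condition is the right one, at the price of first establishing the adjointness, where the same bookkeeping reappears inside the orbit counting.
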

	\begin{proof}
		To show that $\mathbb{T}$ is a twisted bialgebra \cite{acg10}, it is necessary to show that $\Delta_{\searrow}$ is coassociative, and that the species coproduct $\Delta_{\searrow}$ and the product defined by:
			\begin{align*}
	m:\mathbb{T}_{X_1}\otimes \mathbb{T}_{X_2} \longrightarrow \mathbb{T}_{X_1\sqcup X_2} \\
	\mathcal{T}_1\otimes \mathcal{T}_2\longmapsto \mathcal{T}_1\mathcal{T}_2,
	\end{align*} 
		are compatible.
		The unit $\mathbf 1$ is identified to the empty topology.
		Coassociativity is checked by a careful, but straighforward computation. We have
		\begin{align*}
		(\Delta_{\searrow} \otimes id)\Delta_{\searrow} (\mathcal{T})&=(\Delta_{\searrow} \otimes id)\left( \sum_{Y \overline{\in}  \mathcal{T}} \mathcal{T}_{|Y} \otimes \mathcal{T}_{|X\backslash Y}\right) \\
		&=\sum_{Z\overline{\in}\mathcal{T}_{|Y},\, Y \overline{\in}  \mathcal{T}} \mathcal{T}_{|Z}\otimes \mathcal{T}_{|Y\backslash Z} \otimes \mathcal{T}_{|X\backslash Y}.
		\end{align*}
	On the other hand
		\begin{align*}
		(id \otimes \Delta_{\searrow} )\Delta_{\searrow} (\mathcal{T})&=(id \otimes \Delta_{\searrow})\left( \sum_{U \overline{\in}  \mathcal{T}} \mathcal{T}_{|U} \otimes \mathcal{T}_{|X\backslash U} \right) \\
		&=\sum_{W\overline{\in}\mathcal{T}_{|X\backslash U},\, U \overline{\in}  \mathcal{T}} \mathcal{T}_{|U} \otimes \mathcal{T}_{|W} \otimes \mathcal{T}_{|X\backslash (U\sqcup W)}. 
		\end{align*}
		Coassociativity will come from the fact that $(Z, Y )\longmapsto (Z, Y\backslash Z)$ is a bijection from the set of pairs $(Z,Y)$ with $Y\overline{\in} \mathcal{T}$ and $Z\overline{\in} \mathcal{T}_{|Y}$ and, onto the set of pairs $(U, W)$ with $U\overline{\in} \mathcal{T}$ and $W\overline{\in} \mathcal{T}_{|X\backslash U}$. The inverse map is given by $(U, W)\longmapsto (U, U\sqcup W)$.\\
		\\
		Let $A=\{(Z, Y), Y\overline{\in} \mathcal{T} \hbox{ and } Z\overline{\in} \mathcal{T}_{|Y} \}$, and $B=\{(U, W), U\overline{\in} \mathcal{T} \hbox{ and } W\overline{\in} \mathcal{T}_{|X\backslash U} \}$.\\
			We define
			\begin{equation*}
			   \begin{split}
			         f:A&  \longrightarrow  B\\
			        (Z, Y) & \longmapsto  (Z, Y\backslash Z)
			    \end{split}
			     \ \ \ \ \ \ \ \ \ \ \ \ \ \ \ \ \ \ \ \ \ \ \ \ \ \ \ \ \ \begin{split}
			         g: B & \longrightarrow A\\
			        (U, W) & \longmapsto (U, U \sqcup W)
			    \end{split}
			\end{equation*}
Let us prove that $f$ and $g$ are well defined.\\
Let $(Z, Y)\in A$, i.e\\
\hspace*{0,5cm}- $Y\in \mathcal{T}$ and $\mathcal{T}_{|Y}=\mathcal{T}_1...\mathcal{T}_n$, such that for all $i\in \{1,...,n\}$,
$\mathcal{T}_i$ connected component and
$\big(\hbox{min}\mathcal{T}_i=(\hbox{min}\mathcal{T})\cap \mathcal{T}_i$ or there is a unique common ancestor $x_i\in \overline{X\backslash Y}$  to $\hbox{min}\mathcal{T}_i\big)$,\\
and\\
\hspace*{0,5cm}- $Z\in \mathcal{T}_{|Y}$ and $\mathcal{T}_{|Z}=\mathcal{T}_{1|{Z}}...\mathcal{T}_{n|{Z}}=\mathcal{T}_{1,1}\mathcal{T}_{1,2}...\mathcal{T}_{1,i_1}\mathcal{T}_{2,1}\mathcal{T}_{2,2}...\mathcal{T}_{2,i_2}...\mathcal{T}_{n,1}\mathcal{T}_{n,2}...\mathcal{T}_{n,i_n}$, such that for all $i\in \{1,...,n\}, j\in \{i_1,...,i_n\}$,
$\mathcal{T}_{i, j}$ connected component and $\big(\hbox{min}\mathcal{T}_{i, j}=\hbox{min}\mathcal{T}_{|Y}\cap \mathcal{T}_{i, j}$ or there is a unique common ancestor $x_{i, j}\in \overline{Y\backslash Z}$ to $\hbox{min}\mathcal{T}_{i, j}\big)$.\\
Then we can visualise $\mathcal{T}$ by the graph illustrated below in figure \ref{figure1}:\\
\begin{figure}[h!]
\centering
\begin{minipage}{5in}
\fcolorbox{white}{white}{
\scalebox{0.45}{
  \begin{picture}(741,406) (30,-75)
    \SetWidth{3.0}
    \SetColor{Black}
    \Line(32,246)(752,246)
    \Line(32,103)(752,102)
    \Arc(128,278)(16,180,540)
    \Line[dash,dashsize=2](160,278)(192,278)
    \Arc(224,278)(16,180,540)
    \Arc(464,278)(16,180,540)
    \Arc(272,278)(16,180,540)
    \Arc(368,278)(16,180,540)
    \Arc(560,278)(16,180,540)
    \Arc(656,278)(16,180,540)
    \Arc(752,278)(16,180,540)
    \Line[dash,dashsize=2](304,278)(336,278)
    \Line[dash,dashsize=2](496,278)(528,278)
    \Line[dash,dashsize=2](688,278)(720,278)
    \Line[dash,dashsize=2](391,276)(439,276)
    \Arc(320,182)(31,180,540)
    \Arc(512,182)(31,180,540)
    \Arc(400,-26)(48,180,540)
    \Arc(176,182)(31,180,540)
    \SetWidth{2.0}
    \Line(128,262)(162,211)
    \Line(160,214)(136,265)
    \Line(273,261)(323,212)
    \Line(355,268)(323,213)
    \Line(368,262)(322,211)
    \Line(458,262)(497,209)
    \Line(470,263)(496,209)
    \Line(477,268)(497,210)
    \Line(556,262)(526,211)
    \Line(642,268)(430,10)
    \Line(653,262)(432,11)
    \Line(735,275)(436,5)
    \Line(740,266)(436,5)
    \Line(753,261)(436,4)
    \Line(159,156)(360,2)
    \Line(181,151)(360,2)
    \Line(195,156)(360,2)
    \Line(320,151)(382,17)
    \Line(484,168)(401,22)
    \Line(499,153)(401,21)
    \Text(32,278)[lb]{\huge{\Black{$Z$}}}
    \Text(32,134)[lb]{\huge{\Black{$Y\setminus Z$}}}
    \Text(32,-58)[lb]{\huge{\Black{$X\setminus Y$}}}
    \Line(209,270)(184,212)
    \Line(219,263)(183,212)
    \Line(227,262)(183,212)
    \Text(390,-33)[lb]{\huge{\Black{$X$}}}
    \SetWidth{2.0}
    \Line[dash,dashsize=2](389,182)(437,182)
    \Line[dash,dashsize=2](400,182)(448,182)
    \Text(168,171)[lb]{\LARGE{\Black{$Y_1$}}}
    \Text(312,171)[lb]{\LARGE{\Black{$Y_2$}}}
    \Text(504,171)[lb]{\LARGE{\Black{$Y_n$}}}
    \Text(116,274)[lb]{\Large{\Black{$Z_{11}$}}}
    \Text(212,274)[lb]{\Large{\Black{$Z_{1k_1}$}}}
    \Text(261,274)[lb]{\Large{\Black{$Z_{21}$}}}
    \Text(356,274)[lb]{\Large{\Black{$Z_{2k_2}$}}}
    \Text(454,274)[lb]{\Large{\Black{$Z_{n1}$}}}
    \Text(550,274)[lb]{\Large{\Black{$Z_{nk_n}$}}}
    \Text(647,274)[lb]{\Large{\Black{$Z_{01}$}}}
    \Text(744,274)[lb]{\Large{\Black{$Z_{0k_0}$}}}
  \end{picture}
    %  \captionf{ektibli esmi fel article}
}
}
\end{minipage}
\caption{}
\label{figure1}
\end{figure}
Graphically it is clear that $Z\in \mathcal{T}$ and $Y\backslash Z \in \mathcal{T}_{|X\backslash Z}$. Then $(Z, Y\backslash Z)\in B$.\\
Then $f$ is well defined.\\
\\
Let $(U, W)\in B$, i.e\\
\hspace*{0,5cm}- $U\in \mathcal{T}$ and  $\mathcal{T}_{|U}=\mathcal{T}_{|U_1}...\mathcal{T}_{|U_p}$, such that for all $i\in \{1,...,p\},$
$\mathcal{T}_{|U_i}$ connected component and $\big( \hbox{min}\mathcal{T}_{|U_i}=(\hbox{min}\mathcal{T})\cap \mathcal{T}_{|U_i}$ or there is a unique $x_i\in \overline{X\backslash U}$ common ancestor to $\hbox{min}\mathcal{T}_{|U_i}\big)$.\\
and\\
\hspace*{0,5cm}- $W\in \mathcal{T}_{|X\backslash U}$ and  $\mathcal{T}_{|W}=\mathcal{T}^{1}...\mathcal{T}^{q}$, such that for all $j\in \{1,...,q\}$,
$\mathcal{T}^{j}$ connected component and $\big(\hbox{min}\mathcal{T}^{j}=\hbox{min}\mathcal{T}_{|X\backslash U}\cap \mathcal{T}^{j}$ or there is a unique $x_j\in \overline{X\backslash (U\sqcup W)}$ common ancestor to $\hbox{min}\mathcal{T}^{j}\big)$.\\
For all $k\in\{1,...,q\}, \hbox{ we notice } U^{k}=\bigsqcup \limits_{\underset{}{0\le n\le p}} U_i, \hbox{ where } U_i \hbox{ verifies the existence of } x_i\in \overline{W_k}=\overline{\mathcal{V}(\mathcal{T}^{k})}$ common ancestor to $\hbox{min}\mathcal{T}_{|U_i},$ where $v\in \mathcal{V}(\mathcal{T}^{k})$ denotes that $v$ is a element of the topological space $\mathcal{T}^{k}$.\\
We notice $U^0=\bigsqcup \limits_{\underset{}{0\le n\le p}} U_i, \hbox{ where } U_i \hbox{ verifies the existence of a unique } x_i\in \overline{X\backslash (U\sqcup W)}$ common ancestor to $\hbox{min}\mathcal{T}_{|U_i}.$\\
We notice $W_0\subset W, \hbox{ where } W_0 \hbox{ verifie thet for all } x\in W_0, \hbox{ there is no } y \in U \hbox{ such that } x\leq_{\mathcal{T}} y.$\\
Then we can visualise $\mathcal{T}$ by the graph illustrated below in figure \ref{figure2} below:\\
\begin{figure}[h!]
\centering
\begin{minipage}{5in}
\fcolorbox{white}{white}{
\scalebox{0.7}{
  \begin{picture}(741,406) (30,-75)
    \SetWidth{1.5}
    \SetColor{Black}
    \Arc(56.409,133.972)(15.955,135,495)
    \Arc(124.101,133.972)(15.955,135,495)
    \Arc(169.228,133.972)(15.955,135,495)
    \Arc(214.356,133.972)(15.955,135,495)
    \Arc(304.611,133.972)(15.955,135,495)
    \Arc(90.255,235.509)(11.282,90,450)
    \Arc(135.383,235.509)(11.282,180,540)
    \Arc(169.228,235.509)(11.282,180,540)
    \Arc(214.356,235.509)(11.282,180,540)
    \Arc(304.611,235.509)(11.282,180,540)
    \Arc(349.739,235.509)(11.282,90,450)
    \Arc(383.584,235.509)(11.282,90,450)
    \Arc(439.994,235.509)(11.282,90,450)
    \Arc(282.047,9.872)(25.227,153,513)
    \Line(304.611,21.154)(439.994,224.228)
    \Line(293.329,32.435)(383.584,224.228)
    \Line(270.765,32.435)(225.638,122.691)
    \Line(270.765,32.435)(203.074,122.691)
    \Line(259.483,21.154)(180.51,122.691)
    \Line(259.483,21.154)(135.383,122.691)
    \Line(259.483,-1.41)(67.691,122.691)
    \Line(259.483,-1.41)(45.128,122.691)
    \Line(259.483,21.154)(112.819,122.691)
    \Line(293.329,145.254)(304.611,224.228)
    \Line(293.329,145.254)(349.739,224.228)
    \Line(157.946,145.254)(90.255,224.228)
    \Line(157.946,145.254)(101.537,235.509)
    \Line(203.074,145.254)(169.228,224.228)
    \Line(225.638,145.254)(214.356,224.228)
    \Line(180.51,145.254)(135.383,224.228)
    \Line(293.329,145.254)(293.329,235.509)
    \Line(293.329,32.435)(372.302,235.509)
    \Line(304.611,21.154)(428.712,235.509)
    \Line(225.638,145.254)(203.074,235.509)
    \Line(203.074,145.254)(180.51,235.509)
    \Line[dash,dashsize=1.41](78.973,133.972)(101.537,133.972)
    \Line[dash,dashsize=1.41](236.92,133.972)(282.047,133.972)
    \Line(33.846,190.382)(462.557,190.382)
    \Line(33.846,77.563)(462.557,77.563)
    \Text(282.047,9.872)[lb]{\Large{\Black{$X$}}}
    \Line[dash,dashsize=1.41](103.652,235.509)(119.87,235.509)
    \Line[dash,dashsize=1.41](182.626,236.92)(199.548,236.215)
    \Line[dash,dashsize=1.41](318.008,236.215)(335.636,236.92)
    \Line[dash,dashsize=1.41](396.276,235.509)(423.071,235.509)
    \Line[dash,dashsize=1.41](232.689,235.509)(289.098,235.509)
    \Text(111.409,260.894)[lb]{\Large{\Black{$U^1$}}}
    \Text(191.792,260.189)[lb]{\Large{\Black{$U^2$}}}
    \Text(326.47,258.073)[lb]{\Large{\Black{$U^q$}}}
    \Text(89.55,155.831)[lb]{\Large{\Black{$W_0$}}}
    \Text(410.379,257.368)[lb]{\Large{\Black{$U^0$}}}
    \Text(9.872,236.92)[lb]{\Large{\Black{$U$}}}
    \Text(9.872,133.267)[lb]{\Large{\Black{$W$}}}
    \Line(385.7,253.137)(444.93,253.137)
    \Line(90.96,254.548)(141.729,254.548)
    \Line(170.639,254.548)(224.933,254.548)
    \Line(303.906,254.548)(358.2,254.548)
    \Line(54.999,153.011)(131.857,152.306)
    \Text(162.882,130.972)[lb]{\Large{\Black{$W_1$}}}
    \Text(207.305,130.267)[lb]{\Large{\Black{$W_2$}}}
    \Text(296.855,130.562)[lb]{\Large{\Black{$W_q$}}}
    \Text(11.282,20.448)[lb]{\Large{\Black{$X \backslash (U \sqcup W)$}}}
    \Line(259.483,21.154)(130.447,119.165)
    \Line(203.074,145.959)(175.574,225.638)
    \Line(157.946,145.959)(97.306,227.048)
    \Line(304.611,21.859)(432.237,227.048)
    \Line(283.457,34.551)(303.906,118.46)
    \Line(270.765,33.141)(216.471,118.46)
    \Line(282.752,35.256)(296.15,120.575)
  \end{picture}
  %  \captionf{ektibli esmi fel article}  
  }
}
\end{minipage}
\vskip -1.2cm
\caption{}
\label{figure2}
\end{figure}
Graphically it is clear that $U\in \mathcal{T}_{|W\sqcup U}$ and $W\sqcup U\in \mathcal{T}$. Then $(U, U\sqcup W)\in A$.\\
Then $g$ is well defined.\\
We have for all $(Z, Y)\in A$ then $(Z, Y\backslash Z)\in B$, and for all $(U, W)\in B$ then $(U, U\sqcup W)\in A$. Then $| A |= | B |$.\\
Let $(Z_1, Y_1), (Z_2, Y_2) \in A$ such that $f(Z_1, Y_1)=f(Z_2, Y_2)$, then $Z_1=Z_2$, then $Y_1\backslash Z_1=Y_2\backslash Z_1$,\\
then $f$ is injective, then $f$ is bijective.\\
In the same way we show that $g$ is bijective, and $g\circ f=f \circ g=Id$.\\
Then $\Delta$ is coassociative.\\
Finally, we show immediately that
		$$\Delta_{\searrow}(\mathcal{T}_1\mathcal{T}_2)=\Delta_{\searrow}(\mathcal{T}_1)\Delta_{\searrow}(\mathcal{T}_2).$$
\end{proof}
%%%%%%%%%%%%%%
%%%%%%%%%%%%%%%%%%%%%%%ùù
\begin{remark}
\noindent For any finite set $X$, let us recall from \cite{acg6} the internal coproduct $\Gamma$ on $\mathbb{T}_X$:
	\begin{equation}
	 \Gamma(\mathcal{T})=\sum \limits_{\underset{}{\mathcal{T}^{\prime}\soprec \mathcal{T}}}\mathcal{T}^{\prime}\otimes \mathcal{T}/ \mathcal{T}^{\prime}.
	 \end{equation}
	The sum runs over topologies $\mathcal{T}^{\prime}$ which are $\mathcal{T}$-admissible, i.e 
\begin{itemize}
	\item finer than $\mathcal{T}$,
	\item such that $\mathcal{T}^{\prime}_{|Y}=\mathcal{T}_{|Y}$ for any subset $Y\subset X$ connected for the topology $\mathcal{T}^{\prime}$,\
	\item such that for any $x, y \in X$,
	\begin{equation}
	 x \sim_{\mathcal{T}/ \mathcal{T}^{\prime}} y \iff x \sim_{\mathcal{T}^{\prime}/ \mathcal{T}^{\prime}} y.
	 \end{equation}
\end{itemize}
F. Fauvet, L. Foissy, and D. Manchon in \cite{acg6} show that $\Gamma$ and $\Delta$ are compatible.\\
On the other hand, we notice that $\Gamma$ and $\Delta_{\searrow}$ are not compatible. In fact:\\

$\Delta_{\searrow}(\fcolorbox{white}{white}{
\scalebox{0.7}{
  \begin{picture}(10,30) (216,-263)
    \SetWidth{1.0}
    \SetColor{Black}
    \Vertex(218,-259){2}
    \Vertex(210,-247){2}
    \Vertex(226,-247){2}
    \Vertex(218,-236){2}
    \Line(218,-260)(226,-246)
    \Line(219,-260)(209,-247)
    \Line(210,-248)(219,-235)
    \Line(226,-247)(218,-237)
  \end{picture}
}})=\fcolorbox{white}{white}{
\scalebox{0.7}{
  \begin{picture}(5,17) (218,-263)
    \SetWidth{1.0}
    \SetColor{Black}
    \Vertex(210,-260){2}
    \Vertex(226,-260){2}
    \Vertex(218,-249){2}
    \Line(210,-261)(219,-248)
    \Line(226,-260)(218,-250)
  \end{picture}
}}\otimes  \fcolorbox{white}{white}{
\scalebox{0.7}{
  \begin{picture}(-9,17) (218,-263)
    \SetWidth{1.0}
    \SetColor{Black}
    \Vertex(210,-259){2}
  \end{picture}
}} + \mathbf{1}\otimes \hspace*{-0.08cm} \fcolorbox{white}{white}{
\scalebox{0.7}{
  \begin{picture}(10,30) (217,-263)
    \SetWidth{1.0}
    \SetColor{Black}
    \Vertex(218,-259){2}
    \Vertex(210,-247){2}
    \Vertex(226,-247){2}
    \Vertex(218,-236){2}
    \Line(218,-260)(226,-246)
    \Line(219,-260)(209,-247)
    \Line(210,-248)(219,-235)
    \Line(226,-247)(218,-237)
  \end{picture}
}} + \fcolorbox{white}{white}{
\scalebox{0.7}{
  \begin{picture}(7,30) (218,-263)
    \SetWidth{1.0}
    \SetColor{Black}
    \Vertex(218,-259){2}
    \Vertex(210,-247){2}
    \Vertex(226,-247){2}
    \Vertex(218,-236){2}
    \Line(218,-260)(226,-246)
    \Line(219,-260)(209,-247)
    \Line(210,-248)(219,-235)
    \Line(226,-247)(218,-237)
  \end{picture}
}} \otimes \mathbf{1}$\\

$\Gamma(\fcolorbox{white}{white}{
\scalebox{0.7}{
  \begin{picture}(13,30) (216,-263)
    \SetWidth{1.0}
    \SetColor{Black}
    \Vertex(218,-259){2}
    \Vertex(210,-247){2}
    \Vertex(226,-247){2}
    \Vertex(218,-236){2}
    \Line(218,-260)(226,-246)
    \Line(219,-260)(209,-247)
    \Line(210,-248)(219,-235)
    \Line(226,-247)(218,-237)
  \end{picture}
}})=\fcolorbox{white}{white}{
\scalebox{0.7}{
  \begin{picture}(14,19) (218,-274)
    \SetWidth{1.0}
    \SetColor{Black}
    \Vertex(219,-270){2}
    \Vertex(210,-258){2}
    \Vertex(226,-258){2}
    \Line(218,-271)(226,-257)
    \Line(220,-270)(209,-258)
    \Vertex(232,-270){2}
  \end{picture}
}}\otimes \fcolorbox{white}{white}{
\scalebox{0.7}{
  \begin{picture}(8,31) (235,-264)
    \SetWidth{1.0}
    \SetColor{Black}
    \Vertex(232,-257){2}
    \Vertex(239,-257){2}
    \Vertex(236,-252){2}
    \Arc(235,-255)(8.062,187,547)
    \Vertex(235,-237){2}
    \Line(235,-247)(235,-237)
  \end{picture}
}}+ \fcolorbox{white}{white}{
\scalebox{0.7}{
  \begin{picture}(15,17) (216,-264)
    \SetWidth{0.0}
    \SetColor{Black}
    \Vertex(210,-260){2}
    \Vertex(226,-260){2}
    \Vertex(218,-249){2}
    \SetWidth{1.0}
    \Line(210,-261)(219,-248)
    \Line(226,-260)(218,-250)
    \Vertex(233,-260){2}
  \end{picture}
}}\otimes \fcolorbox{white}{white}{
\scalebox{0.7}{
  \begin{picture}(8,30) (236,-277)
    \SetWidth{1.0}
    \SetColor{Black}
    \Vertex(232,-258){2}
    \Vertex(239,-258){2}
    \Vertex(236,-253){2}
    \Arc(235,-256)(8.062,187,547)
    \Line(235,-264)(235,-275)
    \Vertex(235,-275){2}
  \end{picture}
}}+2\fcolorbox{white}{white}{
\scalebox{0.7}{
  \begin{picture}(3,17) (237,-291)
    \SetWidth{1.0}
    \SetColor{Black}
    \Line(235,-278)(235,-288)
    \SetWidth{0.0}
    \Vertex(235,-288){2}
    \SetWidth{1.0}
    \Vertex(235,-277){2}
    \Vertex(241,-288){2}
    \Line(241,-288)(241,-279)
    \Vertex(241,-277){2}
  \end{picture}
}}\otimes \fcolorbox{white}{white}{
\scalebox{0.7}{
  \begin{picture}(3,35) (236,-276)
    \SetWidth{1.0}
    \SetColor{Black}
    \Arc(233,-269)(6.403,141,501)
    \Arc(234,-249)(7,180,540)
    \Vertex(231,-270){2}
    \Vertex(236,-270){2}
    \Vertex(231,-250){2}
    \Vertex(236,-250){2}
    \Line(233,-256)(233,-263)
  \end{picture}
}}+\fcolorbox{white}{white}{
\scalebox{0.7}{
  \begin{picture}(10,7) (215,-286)
    \SetWidth{1.0}
    \SetColor{Black}
    \Vertex(208,-282){2}
    \Vertex(214,-282){2}
    \Vertex(220,-282){2}
    \Vertex(226,-282){2}
  \end{picture}
}}\otimes \fcolorbox{white}{white}{
\scalebox{0.7}{
  \begin{picture}(22,30) (218,-263)
    \SetWidth{1.0}
    \SetColor{Black}
    \Vertex(218,-259){2}
    \Vertex(210,-247){2}
    \Vertex(226,-247){2}
    \Vertex(218,-236){2}
    \Line(218,-260)(226,-246)
    \Line(219,-260)(209,-247)
    \Line(210,-248)(219,-235)
    \Line(226,-247)(218,-237)
  \end{picture}
}}\\
\hspace*{3cm} +\fcolorbox{white}{white}{
\scalebox{0.7}{
  \begin{picture}(8,30) (217,-263)
    \SetWidth{1.0}
    \SetColor{Black}
    \Vertex(218,-259){2}
    \Vertex(210,-247){2}
    \Vertex(226,-247){2}
    \Vertex(218,-236){2}
    \Line(218,-260)(226,-246)
    \Line(219,-260)(209,-247)
    \Line(210,-248)(219,-235)
    \Line(226,-247)(218,-237)
  \end{picture}
}} \otimes \fcolorbox{white}{white}{
\scalebox{0.7}{
  \begin{picture}(20,20) (206,-312)
    \SetWidth{1.0}
    \SetColor{Black}
    \Arc(208,-302)(9.22,167,527)
    \Vertex(205,-298){2}
    \Vertex(211,-298){2}
    \Vertex(205,-304){2}
    \Vertex(211,-304){2}
  \end{picture}
}}$\\
\\
%%%%%%%%%%%%%ù
then\\

$(Id\otimes \Delta_{\searrow})\Gamma(\fcolorbox{white}{white}{
\scalebox{0.7}{
  \begin{picture}(12,30) (216,-263)
    \SetWidth{1.0}
    \SetColor{Black}
    \Vertex(218,-259){2}
    \Vertex(210,-247){2}
    \Vertex(226,-247){2}
    \Vertex(218,-236){2}
    \Line(218,-260)(226,-246)
    \Line(219,-260)(209,-247)
    \Line(210,-248)(219,-235)
    \Line(226,-247)(218,-237)
  \end{picture}
}})=\fcolorbox{white}{white}{
\scalebox{0.7}{
  \begin{picture}(13,19) (217,-274)
    \SetWidth{1.0}
    \SetColor{Black}
    \Vertex(219,-270){2}
    \Vertex(210,-258){2}
    \Vertex(226,-258){2}
    \Line(218,-271)(226,-257)
    \Line(220,-270)(209,-258)
    \Vertex(232,-270){2}
  \end{picture}
}}\otimes [ \fcolorbox{white}{white}{
\scalebox{0.7}{
  \begin{picture}(-6,17) (215,-263)
    \SetWidth{1.0}
    \SetColor{Black}
    \Vertex(210,-259){2}
  \end{picture}
}} \otimes \fcolorbox{white}{white}{
\scalebox{0.7}{
  \begin{picture}(6,31) (236,-264)
    \SetWidth{1.0}
    \SetColor{Black}
    \Vertex(232,-257){2}
    \Vertex(239,-257){2}
    \Vertex(236,-252){2}
    \Arc(235,-255)(8.062,187,547)
  \end{picture}
}} + \mathbf{1}\otimes \fcolorbox{white}{white}{
\scalebox{0.7}{
  \begin{picture}(6,31) (236,-264)
    \SetWidth{1.0}
    \SetColor{Black}
    \Vertex(232,-257){2}
    \Vertex(239,-257){2}
    \Vertex(236,-252){2}
    \Arc(235,-255)(8.062,187,547)
    \Vertex(235,-237){2}
    \Line(235,-247)(235,-237)
  \end{picture}
}} + \fcolorbox{white}{white}{
\scalebox{0.7}{
  \begin{picture}(5,31) (236,-264)
    \SetWidth{1.0}
    \SetColor{Black}
    \Vertex(232,-257){2}
    \Vertex(239,-257){2}
    \Vertex(236,-252){2}
    \Arc(235,-255)(8.062,187,547)
    \Vertex(235,-237){2}
    \Line(235,-247)(235,-237)
  \end{picture}
}} \otimes \mathbf{1} ]\\
 \hspace*{5cm}+\fcolorbox{white}{white}{
 \scalebox{0.7}{
  \begin{picture}(15,17) (216,-264)
    \SetWidth{0.0}
    \SetColor{Black}
    \Vertex(210,-260){2}
    \Vertex(226,-260){2}
    \Vertex(218,-249){2}
    \SetWidth{1.0}
    \Line(210,-261)(219,-248)
    \Line(226,-260)(218,-250)
    \Vertex(233,-260){2}
  \end{picture}
}}\otimes [ \fcolorbox{white}{white}{
\scalebox{0.7}{
  \begin{picture}(6,31) (234,-264)
    \SetWidth{1.0}
    \SetColor{Black}
    \Vertex(232,-257){2}
    \Vertex(239,-257){2}
    \Vertex(236,-252){2}
    \Arc(235,-255)(8.062,187,547)
  \end{picture}
}} \otimes \fcolorbox{white}{white}{
\scalebox{0.7}{
  \begin{picture}(-9,17) (220,-263)
    \SetWidth{1.0}
    \SetColor{Black}
    \Vertex(210,-259){2}
  \end{picture}
}} + \mathbf{1}\otimes \fcolorbox{white}{white}{
\scalebox{0.7}{
  \begin{picture}(4,30) (238,-277)
    \SetWidth{1.0}
    \SetColor{Black}
    \Vertex(232,-258){2}
    \Vertex(239,-258){2}
    \Vertex(236,-253){2}
    \Arc(235,-256)(8.062,187,547)
    \Line(235,-264)(235,-275)
    \Vertex(235,-275){2}
  \end{picture}
}} + \fcolorbox{white}{white}{
\scalebox{0.7}{
  \begin{picture}(3,30) (236,-277)
    \SetWidth{1.0}
    \SetColor{Black}
    \Vertex(232,-258){2}
    \Vertex(239,-258){2}
    \Vertex(236,-253){2}
    \Arc(235,-256)(8.062,187,547)
    \Line(235,-264)(235,-275)
    \Vertex(235,-275){2}
  \end{picture}
}}\otimes \mathbf{1} ] \\
\hspace*{5cm}+2\fcolorbox{white}{white}{
\scalebox{0.7}{
  \begin{picture}(4,17) (236,-291)
    \SetWidth{1.0}
    \SetColor{Black}
    \Line(235,-278)(235,-288)
    \SetWidth{0.0}
    \Vertex(235,-288){2}
    \SetWidth{1.0}
    \Vertex(235,-277){2}
    \Vertex(241,-288){2}
    \Line(241,-288)(241,-279)
    \Vertex(241,-277){2}
  \end{picture}
}}\otimes [ \fcolorbox{white}{white}{
\scalebox{0.7}{
  \begin{picture}(1,35) (234,-276)
    \SetWidth{1.0}
    \SetColor{Black}
    \Arc(233,-269)(6.403,141,501)
    \Vertex(231,-270){2}
    \Vertex(236,-270){2}
  \end{picture}
}}\otimes \fcolorbox{white}{white}{
\scalebox{0.7}{
  \begin{picture}(3,35) (236,-276)
    \SetWidth{1.0}
    \SetColor{Black}
    \Arc(233,-269)(6.403,141,501)
    \Vertex(231,-270){2}
    \Vertex(236,-270){2}
  \end{picture}
}} + \mathbf{1}\otimes \fcolorbox{white}{white}{
\scalebox{0.7}{
  \begin{picture}(3,35) (236,-276)
    \SetWidth{1.0}
    \SetColor{Black}
    \Arc(233,-269)(6.403,141,501)
    \Arc(234,-249)(7,180,540)
    \Vertex(231,-270){2}
    \Vertex(236,-270){2}
    \Vertex(231,-250){2}
    \Vertex(236,-250){2}
    \Line(233,-256)(233,-263)
  \end{picture}
}}+ \fcolorbox{white}{white}{
\scalebox{0.7}{
  \begin{picture}(3,35) (234,-276)
    \SetWidth{1.0}
    \SetColor{Black}
    \Arc(233,-269)(6.403,141,501)
    \Arc(234,-249)(7,180,540)
    \Vertex(231,-270){2}
    \Vertex(236,-270){2}
    \Vertex(231,-250){2}
    \Vertex(236,-250){2}
    \Line(233,-256)(233,-263)
  \end{picture}
}}\otimes \mathbf{1} ]\\
\hspace*{5cm}+\fcolorbox{white}{white}{
\scalebox{0.7}{
  \begin{picture}(11,7) (213,-286)
    \SetWidth{1.0}
    \SetColor{Black}
    \Vertex(208,-282){2}
    \Vertex(214,-282){2}
    \Vertex(220,-282){2}
    \Vertex(226,-282){2}
  \end{picture}
}}\otimes [ \fcolorbox{white}{white}{
\scalebox{0.7}{
  \begin{picture}(8,17) (216,-263)
    \SetWidth{1.0}
    \SetColor{Black}
    \Vertex(210,-260){2}
    \Vertex(226,-260){2}
    \Vertex(218,-249){2}
    \Line(210,-261)(219,-248)
    \Line(226,-260)(218,-250)
  \end{picture}
}} \otimes \fcolorbox{white}{white}{
\scalebox{0.7}{
  \begin{picture}(-6,17) (217,-263)
    \SetWidth{1.0}
    \SetColor{Black}
    \Vertex(210,-259){2}
  \end{picture}
}} + \mathbf{1}\otimes  \fcolorbox{white}{white}{
\scalebox{0.7}{
  \begin{picture}(10,30) (219,-263)
    \SetWidth{1.0}
    \SetColor{Black}
    \Vertex(218,-259){2}
    \Vertex(210,-247){2}
    \Vertex(226,-247){2}
    \Vertex(218,-236){2}
    \Line(218,-260)(226,-246)
    \Line(219,-260)(209,-247)
    \Line(210,-248)(219,-235)
    \Line(226,-247)(218,-237)
  \end{picture}
}} + \fcolorbox{white}{white}{
\scalebox{0.7}{
  \begin{picture}(8,30) (217,-263)
    \SetWidth{1.0}
    \SetColor{Black}
    \Vertex(218,-259){2}
    \Vertex(210,-247){2}
    \Vertex(226,-247){2}
    \Vertex(218,-236){2}
    \Line(218,-260)(226,-246)
    \Line(219,-260)(209,-247)
    \Line(210,-248)(219,-235)
    \Line(226,-247)(218,-237)
  \end{picture}
}}  \otimes \mathbf{1} ]\\
\hspace*{5cm}+\fcolorbox{white}{white}{
\scalebox{0.7}{
  \begin{picture}(11,30) (215,-263)
    \SetWidth{1.0}
    \SetColor{Black}
    \Vertex(218,-259){2}
    \Vertex(210,-247){2}
    \Vertex(226,-247){2}
    \Vertex(218,-236){2}
    \Line(218,-260)(226,-246)
    \Line(219,-260)(209,-247)
    \Line(210,-248)(219,-235)
    \Line(226,-247)(218,-237)
  \end{picture}
}} \otimes [ \fcolorbox{white}{white}{
\scalebox{0.7}{
  \begin{picture}(10,20) (205,-312)
    \SetWidth{1.0}
    \SetColor{Black}
    \Arc(208,-302)(9.22,167,527)
    \Vertex(205,-298){2}
    \Vertex(211,-298){2}
    \Vertex(205,-304){2}
    \Vertex(211,-304){2}
  \end{picture}
}}\otimes \mathbf{1} + \mathbf{1}\otimes \fcolorbox{white}{white}{
\scalebox{0.7}{
  \begin{picture}(9,20) (209,-312)
    \SetWidth{1.0}
    \SetColor{Black}
    \Arc(208,-302)(9.22,167,527)
    \Vertex(205,-298){2}
    \Vertex(211,-298){2}
    \Vertex(205,-304){2}
    \Vertex(211,-304){2}
  \end{picture}
}} ]$\\
\\
On the other hand\\

$m^{13}(\Gamma \otimes \Gamma)\Delta_{\searrow}(\fcolorbox{white}{white}{
\scalebox{0.7}{
  \begin{picture}(11,30) (215,-263)
    \SetWidth{1.0}
    \SetColor{Black}
    \Vertex(218,-259){2}
    \Vertex(210,-247){2}
    \Vertex(226,-247){2}
    \Vertex(218,-236){2}
    \Line(218,-260)(226,-246)
    \Line(219,-260)(209,-247)
    \Line(210,-248)(219,-235)
    \Line(226,-247)(218,-237)
  \end{picture}
}})=\fcolorbox{white}{white}{
\scalebox{0.7}{
  \begin{picture}(13,19) (217,-274)
    \SetWidth{1.0}
    \SetColor{Black}
    \Vertex(219,-270){2}
    \Vertex(210,-258){2}
    \Vertex(226,-258){2}
    \Line(218,-271)(226,-257)
    \Line(220,-270)(209,-258)
    \Vertex(232,-270){2}
  \end{picture}
}}\otimes [ \mathbf{1}\otimes \fcolorbox{white}{white}{
\scalebox{0.7}{
  \begin{picture}(8,31) (236,-264)
    \SetWidth{1.0}
    \SetColor{Black}
    \Vertex(232,-257){2}
    \Vertex(239,-257){2}
    \Vertex(236,-252){2}
    \Arc(235,-255)(8.062,187,547)
    \Vertex(235,-237){2}
    \Line(235,-247)(235,-237)
  \end{picture}
}} +\fcolorbox{white}{white}{
\scalebox{0.7}{
  \begin{picture}(4,31) (236,-264)
    \SetWidth{1.0}
    \SetColor{Black}
    \Vertex(232,-257){2}
    \Vertex(239,-257){2}
    \Vertex(236,-252){2}
    \Arc(235,-255)(8.062,187,547)
    \Vertex(235,-237){2}
    \Line(235,-247)(235,-237)
  \end{picture}
}} \otimes \mathbf{1} ]\\
 \hspace*{5cm}+\fcolorbox{white}{white}{
 \scalebox{0.7}{
  \begin{picture}(15,17) (216,-264)
    \SetWidth{0.0}
    \SetColor{Black}
    \Vertex(210,-260){2}
    \Vertex(226,-260){2}
    \Vertex(218,-249){2}
    \SetWidth{1.0}
    \Line(210,-261)(219,-248)
    \Line(226,-260)(218,-250)
    \Vertex(233,-260){2}
  \end{picture}
}}\otimes [ \fcolorbox{white}{white}{
\scalebox{0.7}{
  \begin{picture}(5,31) (235,-264)
    \SetWidth{1.0}
    \SetColor{Black}
    \Vertex(232,-257){2}
    \Vertex(239,-257){2}
    \Vertex(236,-252){2}
    \Arc(235,-255)(8.062,187,547)
  \end{picture}
}} \otimes  \fcolorbox{white}{white}{
\scalebox{0.7}{
  \begin{picture}(-6,17) (217,-263)
    \SetWidth{1.0}
    \SetColor{Black}
    \Vertex(210,-259){2}
  \end{picture}
}}  + \mathbf{1}  \otimes \fcolorbox{white}{white}{
\scalebox{0.7}{
  \begin{picture}(6,30) (236,-277)
    \SetWidth{1.0}
    \SetColor{Black}
    \Vertex(232,-258){2}
    \Vertex(239,-258){2}
    \Vertex(236,-253){2}
    \Arc(235,-256)(8.062,187,547)
    \Line(235,-264)(235,-275)
    \Vertex(235,-275){2}
  \end{picture}
}}  +  \fcolorbox{white}{white}{
\scalebox{0.7}{
  \begin{picture}(4,30) (236,-277)
    \SetWidth{1.0}
    \SetColor{Black}
    \Vertex(232,-258){2}
    \Vertex(239,-258){2}
    \Vertex(236,-253){2}
    \Arc(235,-256)(8.062,187,547)
    \Line(235,-264)(235,-275)
    \Vertex(235,-275){2}
  \end{picture}
}}\otimes \mathbf{1} ] \\
\hspace*{5cm}+2\fcolorbox{white}{white}{
\scalebox{0.7}{
  \begin{picture}(3,17) (237,-291)
    \SetWidth{1.0}
    \SetColor{Black}
    \Line(235,-278)(235,-288)
    \SetWidth{0.0}
    \Vertex(235,-288){2}
    \SetWidth{1.0}
    \Vertex(235,-277){2}
    \Vertex(241,-288){2}
    \Line(241,-288)(241,-279)
    \Vertex(241,-277){2}
  \end{picture}
}}\otimes [ \mathbf{1}\otimes \fcolorbox{white}{white}{
\scalebox{0.7}{
  \begin{picture}(2,35) (236,-276)
    \SetWidth{1.0}
    \SetColor{Black}
    \Arc(233,-269)(6.403,141,501)
    \Arc(234,-249)(7,180,540)
    \Vertex(231,-270){2}
    \Vertex(236,-270){2}
    \Vertex(231,-250){2}
    \Vertex(236,-250){2}
    \Line(233,-256)(233,-263)
  \end{picture}
}}+ \fcolorbox{white}{white}{
\scalebox{0.7}{
  \begin{picture}(3,35) (234,-276)
    \SetWidth{1.0}
    \SetColor{Black}
    \Arc(233,-269)(6.403,141,501)
    \Arc(234,-249)(7,180,540)
    \Vertex(231,-270){2}
    \Vertex(236,-270){2}
    \Vertex(231,-250){2}
    \Vertex(236,-250){2}
    \Line(233,-256)(233,-263)
  \end{picture}
}}\otimes \mathbf{1} ]\\
\hspace*{5cm}+\fcolorbox{white}{white}{
\scalebox{0.7}{
  \begin{picture}(9,7) (214,-286)
    \SetWidth{1.0}
    \SetColor{Black}
    \Vertex(208,-282){2}
    \Vertex(214,-282){2}
    \Vertex(220,-282){2}
    \Vertex(226,-282){2}
  \end{picture}
}}\otimes [ \fcolorbox{white}{white}{
\scalebox{0.7}{
  \begin{picture}(9,17) (215,-263)
    \SetWidth{1.0}
    \SetColor{Black}
    \Vertex(210,-260){2.236}
    \Vertex(226,-260){2}
    \Vertex(218,-249){2}
    \Line(210,-261)(219,-248)
    \Line(226,-260)(218,-250)
  \end{picture}
}} \otimes  \fcolorbox{white}{white}{
\scalebox{0.7}{
  \begin{picture}(-5,17) (217,-263)
    \SetWidth{1.0}
    \SetColor{Black}
    \Vertex(210,-259){2}
  \end{picture}
}} + \mathbf{1}\otimes  \fcolorbox{white}{white}{
\scalebox{0.7}{
  \begin{picture}(8,30) (219,-263)
    \SetWidth{1.0}
    \SetColor{Black}
    \Vertex(218,-259){2}
    \Vertex(210,-247){2}
    \Vertex(226,-247){2}
    \Vertex(218,-236){2}
    \Line(218,-260)(226,-246)
    \Line(219,-260)(209,-247)
    \Line(210,-248)(219,-235)
    \Line(226,-247)(218,-237)
  \end{picture}
}} +  \fcolorbox{white}{white}{
\scalebox{0.7}{
  \begin{picture}(7,30) (217,-263)
    \SetWidth{1.0}
    \SetColor{Black}
    \Vertex(218,-259){2}
    \Vertex(210,-247){2}
    \Vertex(226,-247){2}
    \Vertex(218,-236){2}
    \Line(218,-260)(226,-246)
    \Line(219,-260)(209,-247)
    \Line(210,-248)(219,-235)
    \Line(226,-247)(218,-237)
  \end{picture}
}}  \otimes \mathbf{1} ]\\
\hspace*{5cm}+\fcolorbox{white}{white}{
\scalebox{0.7}{
  \begin{picture}(8,30) (217,-263)
    \SetWidth{1.0}
    \SetColor{Black}
    \Vertex(218,-259){2}
    \Vertex(210,-247){2}
    \Vertex(226,-247){2}
    \Vertex(218,-236){2}
    \Line(218,-260)(226,-246)
    \Line(219,-260)(209,-247)
    \Line(210,-248)(219,-235)
    \Line(226,-247)(218,-237)
  \end{picture}
}} \otimes [ \fcolorbox{white}{white}{
\scalebox{0.7}{
  \begin{picture}(9,20) (205,-312)
    \SetWidth{1.0}
    \SetColor{Black}
    \Arc(208,-302)(9.22,167,527)
    \Vertex(205,-298){2}
    \Vertex(211,-298){2}
    \Vertex(205,-304){2}
    \Vertex(211,-304){2}
  \end{picture}
}}\otimes \mathbf{1} + \mathbf{1}\otimes \fcolorbox{white}{white}{
\scalebox{0.7}{
  \begin{picture}(11,20) (205,-312)
    \SetWidth{1.0}
    \SetColor{Black}
    \Arc(208,-302)(9.22,167,527)
    \Vertex(205,-298){2}
    \Vertex(211,-298){2}
    \Vertex(205,-304){2}
    \Vertex(211,-304){2}
  \end{picture}
}} ]\\
\hspace*{5cm}+\fcolorbox{white}{white}{
\scalebox{0.7}{
  \begin{picture}(5,19) (232,-262)
    \SetWidth{0.0}
    \SetColor{Black}
    \Vertex(226,-258){2}
    \Vertex(233,-258){2}
    \SetWidth{1.0}
    \Line(227,-245)(228,-244)
    \SetWidth{0.0}
    \Vertex(226,-245){2}
    \SetWidth{1.0}
    \Line(226,-259)(226,-245)
    \Vertex(239,-258){2}
  \end{picture}
}}\otimes [\fcolorbox{white}{white}{
\scalebox{0.7}{
  \begin{picture}(2,24) (234,-287)
    \SetWidth{1.0}
    \SetColor{Black}
    \Arc(233,-280)(6.403,141,501)
    \SetWidth{0.0}
    \Vertex(231,-281){2}
    \Vertex(236,-281){2}
    \SetWidth{1.0}
    \Line(233,-267)(233,-274)
    \Vertex(233,-266){2}
  \end{picture}
}}\otimes  \fcolorbox{white}{white}{
\scalebox{0.7}{
  \begin{picture}(-6,17) (217,-263)
    \SetWidth{1.0}
    \SetColor{Black}
    \Vertex(210,-259){2}
  \end{picture}
}} + \fcolorbox{white}{white}{
\scalebox{0.7}{
  \begin{picture}(2,26) (236,-276)
    \SetWidth{1.0}
    \SetColor{Black}
    \Arc(234,-258)(7,180,540)
    \SetWidth{0.0}
    \Vertex(231,-259){2}
    \Vertex(236,-259){2}
    \SetWidth{1.0}
    \Line(234,-265)(234,-272)
    \Vertex(234,-273){2}
  \end{picture}
}} \otimes  \fcolorbox{white}{white}{
\scalebox{0.7}{
  \begin{picture}(12,17) (217,-263)
    \SetWidth{1.0}
    \SetColor{Black}
    \Vertex(210,-259){2}
  \end{picture}
}}\hspace*{-0,5cm}] $\\
%%%%%%%%%%%%%%%ùùùù
then $(Id\otimes \Delta_{\searrow})\Gamma(\fcolorbox{white}{white}{
\scalebox{0.7}{
  \begin{picture}(11,30) (216,-263)
    \SetWidth{1.0}
    \SetColor{Black}
    \Vertex(218,-259){2}
    \Vertex(210,-247){2}
    \Vertex(226,-247){2}
    \Vertex(218,-236){2}
    \Line(218,-260)(226,-246)
    \Line(219,-260)(209,-246)
    \Line(210,-248)(219,-235)
    \Line(226,-247)(218,-237)
  \end{picture}
}})\neq m^{13}(\Gamma \otimes \Gamma)\Delta_{\searrow}(\fcolorbox{white}{white}{
\scalebox{0.7}{
  \begin{picture}(11,30) (216,-263)
    \SetWidth{1.0}
    \SetColor{Black}
    \Vertex(218,-259){2}
    \Vertex(210,-247){2}
    \Vertex(226,-247){2}
    \Vertex(218,-236){2}
    \Line(218,-260)(226,-246)
    \Line(219,-260)(209,-247)
    \Line(210,-248)(219,-235)
    \Line(226,-247)(218,-237)
  \end{picture}
}})$.
Then $\Gamma \hbox{ and } \Delta_{\searrow}$ are not compatible.
\end{remark}
\subsection{Relation between $\star$ and  $\Delta_{\searrow}$}
In this subsection, we prove that there exist relations between the Grossman-Larson product $\star$ and the  coproduct $\Delta_{\searrow}$.\\
Let $G$ be a group acting on $X$. For every $x\in X$, we denote by $G\cdot x$ the orbit of $x$ and we denote by $G_x$ the stabilizer subgroup of $G$ with respect to $x$. The group action is transitive if and only if it has exactly one orbit, that is if there exists $x$ in $X$ with $G\cdot x=X$ (i.e. $X$ is non-empty and if for each pair $x, y \in X$ there exists $g\in G$ such that $g\cdot x=y$). This is the case if and only if $G\cdot x=X$, for all $x$ in $X$.\\
     If $G$ and $X$ is finite, then the orbit-stabilizer theorem, together with Lagrange's theorem \cite{acg..0}(theorem 3.9), gives
    \begin{equation}
        {\displaystyle |G\cdot x|=[G_{X}\,:\,G]=\dfrac{|G|}{|G_{x}|}},
    \end{equation}
 that implies that the cardinal of the orbit is a divisor of the group order.

\begin{definition}
    For any topology $\mathcal{T}$ on a finite set $X$,
    we denote by $\hbox{\hbox{Aut}}(\mathcal{T})$ the subgroup of permutations of $X$ which are homeomorphisms with respect to $\mathcal{T}$. The symmetry factor is defined by $\sigma(\mathcal{T})=| \hbox{\hbox{Aut}}(\mathcal{T}) |$.
    We define the linear map $e_{\mathcal{T}}:\mathbb{T}_{X} \longrightarrow \mathbb{K}$ by:
    $$e_{\mathcal{T}}(\mathcal{T}')=\sigma(\mathcal{T}), \hbox{ if } \mathcal{T}=\mathcal{T}', \hbox{ and 0 if not}.$$
\end{definition}
\begin{definition}
    We define the graft operator
    $\mathcal{B}:\mathbb{T}\to \mathbb{T}$ by, $\mathcal{B}(\mathcal{T})=\mathcal{T}\searrow \{*\}$, for any topology $\mathcal{T} \hbox{ on } X$, this is the topology on $X\sqcup \{*\}$ obtained by keeping the preorder and $X$ and by putting $*< x$ for any $x\in X$.
\end{definition}
\begin{theorem}
Let $\mathcal{T}_1\in \mathbb{T}_{X_1}$, $\mathcal{T}_2\in \mathbb{T}_{X_2}$ and $\mathcal{T}'\in \mathbb{T}_{X}$, then
$$<e_{\mathcal{T}_1}\star e_{\mathcal{T}_2}, \mathcal{T}'>=<e_{\mathcal{T}_1}\otimes e_{\mathcal{T}_2}, \Delta_{\searrow}(\mathcal{T}')>.$$
\end{theorem}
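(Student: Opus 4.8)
The plan is to reduce the identity to a symmetry-factor-weighted count of graftings on one side and of admissible decompositions on the other, and then to match the two counts by a bijection on labelled topologies, the symmetry factors being absorbed through the orbit--stabilizer theorem recalled just above. By bilinearity in $(\mathcal{T}_1,\mathcal{T}_2)$ and linearity in $\mathcal{T}'$ it suffices to treat basis topologies.

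First I would expand the left-hand side. By the Guin--Oudom formula $\mathcal{T}_1\star\mathcal{T}_2=\sum_{(\mathcal{T}_1)}\mathcal{T}_1^{(1)}(\mathcal{T}_1^{(2)}\searrow\mathcal{T}_2)$, where $\Delta_{unsh}(\mathcal{T}_1)$ runs over the splittings of the connected components of $\mathcal{T}_1$ into an ordered pair of sub-forests, and where the rules extending $\searrow$ to $S(\mathbb{V})$ make $\mathcal{T}_1^{(2)}\searrow\mathcal{T}_2$ the sum, over all functions assigning to each connected component of $\mathcal{T}_1^{(2)}$ a point of $X_2$, of the topology obtained by grafting that component above the chosen point. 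Thus $\mathcal{T}_1\star\mathcal{T}_2$ is a sum indexed by a set $\mathcal{G}(\mathcal{T}_1,\mathcal{T}_2)$ of grafting data $\gamma$ (a distinguished clopen sub-forest of $\mathcal{T}_1$, plus a grafting point in $X_2$ for each remaining component), each $\gamma$ yielding a topology $\mathcal{T}_\gamma$ on $X_1\sqcup X_2$; using the definition of $e_{\mathcal{T}'}$, the left-hand side becomes $\sigma(\mathcal{T}')$ times the number of $\gamma$ with $\mathcal{T}_\gamma\cong\mathcal{T}'$.

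Expanding the right-hand side with $e_{\mathcal{T}_i}(\mathcal{S})=\sigma(\mathcal{T}_i)\,[\mathcal{S}\cong\mathcal{T}_i]$ shows it equals $\sigma(\mathcal{T}_1)\sigma(\mathcal{T}_2)$ times the number of $Y\,\overline{\in}\,\mathcal{T}'$ with $\mathcal{T}'_{|Y}\cong\mathcal{T}_1$ and $\mathcal{T}'_{|X\setminus Y}\cong\mathcal{T}_2$. The core of the proof is then a bijection between, on one side, pairs $(\gamma,\psi)$ with $\gamma\in\mathcal{G}(\mathcal{T}_1,\mathcal{T}_2)$ and $\psi\colon\mathcal{T}_\gamma\to\mathcal{T}'$ an isomorphism, and on the other side, triples $(Y,\phi_1,\phi_2)$ with $Y\,\overline{\in}\,\mathcal{T}'$ and $\phi_1\colon\mathcal{T}'_{|Y}\to\mathcal{T}_1$, $\phi_2\colon\mathcal{T}'_{|X\setminus Y}\to\mathcal{T}_2$ isomorphisms. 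By orbit--stabilizer the first set has $\sigma(\mathcal{T}')$ elements for each $\gamma$ counted on the left and the second has $\sigma(\mathcal{T}_1)\sigma(\mathcal{T}_2)$ for each decomposition counted on the right, so such a bijection proves the theorem. I would send $(\gamma,\psi)$ to $Y:=\psi(X_1)$, which is open since $X_1$ is an up-set for $\mathcal{T}_\gamma$ (the grafted copies of $\mathcal{T}_1$ lie above $X_2$), taking $\phi_1,\phi_2$ to be the restrictions of $\psi^{-1}$, using $\mathcal{T}_{\gamma|X_1}=\mathcal{T}_1$ and $\mathcal{T}_{\gamma|X_2}=\mathcal{T}_2$; conversely $(Y,\phi_1,\phi_2)$ goes to the pullback of $\mathcal{T}'$ along $\phi_1^{-1}\sqcup\phi_2^{-1}$, with that bijection playing the role of $\psi$. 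The point to verify is that the pullback topology is of the form $\mathcal{T}_\gamma$ exactly when $Y\,\overline{\in}\,\mathcal{T}'$: the clause $\min\mathcal{T}_i=(\min\mathcal{T}')\cap\mathcal{T}_i$ corresponds to a component left in the spectator sub-forest, and the clause ``a single common ancestor $x_i\in\overline{X\setminus Y}$'' corresponds to a component grafted at $x_i$.

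The step I expect to be the real obstacle is exactly this last equivalence, namely checking that the condition $Y\,\overline{\in}\,\mathcal{T}'$ captures precisely the topologies produced by grafting, neither more nor fewer, together with the bookkeeping of the symmetry factors, and in particular of the multiplicities coming from the sum $\sum_{v\in X_2}$ over all points of a $\sim_{\mathcal{T}_2}$-equivalence class; this is where the orbit--stabilizer and Lagrange input recalled before the statement is genuinely used. The remaining ingredients---the expansion of $\star$, the reduction to basis elements, and the passage to $\overline{\mathcal{K}}(\mathbb{T})$---are routine.
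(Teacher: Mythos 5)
Your reduction of both sides to symmetry-weighted counts is sound and agrees with the paper's starting point, but your architecture differs from the paper's: the paper first converts the $\star$-product into a single grafting by means of the rooting operator $\mathcal{B}$ (writing $\langle e_{\mathcal{T}_1}\star e_{\mathcal{T}_2},\mathcal{T}'\rangle=\langle e_{\mathcal{T}_1\searrow\mathcal{B}\mathcal{T}_2},\mathcal{B}\mathcal{T}'\rangle$), splits into the cases $\mathcal{T}_1$ connected or not, and then runs two \emph{separate} orbit--stabilizer computations --- one for the set $B$ of grafting vertices under $\mathrm{Aut}(\mathcal{B}\mathcal{T}_2)$, one for the set $A$ of cut vertices under $\mathrm{Aut}(\mathcal{T}')$ --- glued by $|\mathrm{Aut}(\mathcal{T}')_v|=|\mathrm{Aut}(\mathcal{T}_1)|\,|\mathrm{Aut}(\mathcal{T}_2)_v|$ and by the unproved assertion $|A|=|A'|$. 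Your single global bijection between isomorphism-decorated grafting data and isomorphism-decorated admissible cuts is a cleaner plan, but it does not close.

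The gap is exactly the one you flag and then leave open, and it is fatal to the bijection as stated. The assignment $\gamma\mapsto\mathcal{T}_\gamma$ is not injective: by the definition of $\searrow_v$, the topology $\mathcal{T}_1\searrow_v\mathcal{T}_2$ depends only on the $\sim_{\mathcal{T}_2}$-class of $v$, so two grafting data differing only by replacing a grafting point with an equivalent one produce the \emph{same} $\mathcal{T}_\gamma$; hence your map $(\gamma,\psi)\mapsto\big(\psi(X_1),\psi^{-1}|_{Y},\psi^{-1}|_{X\setminus Y}\big)$ has fibres of cardinality $\prod_i|\overline{v_i}|$ and cannot be a bijection. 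This is not bookkeeping that orbit--stabilizer absorbs: take $\mathcal{T}_1=\{a\}$, $\mathcal{T}_2$ the indiscrete topology on $\{b,c\}$, and $\mathcal{T}'$ the topology on $\{a,b,c\}$ with $b\sim c\le a$; then your left-hand set has $2\cdot\sigma(\mathcal{T}')=4$ elements (two grafting points $b,c$, two automorphisms of $\mathcal{T}'$) while your right-hand set has $1\cdot\sigma(\mathcal{T}_1)\,\sigma(\mathcal{T}_2)=2$ elements (one admissible cut $Y=\{a\}$). The same discrepancy is what hides behind the paper's step ``$|A|=|A'|$'', where $A$ counts vertices and $A'$ counts cuts, so the paper's own proof stumbles at the corresponding point. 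On the $T_0$ (poset) part of the species, where all $\sim$-classes are singletons, both your bijection and the paper's count do close; in general you would have to either restrict to that case or reweight one side by the sizes of the equivalence classes of the grafting points, which changes the statement.
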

\begin{proof}
 Let $\mathcal{T}_1\in \mathbb{T}_{X_1}$, $\mathcal{T}_2\in \mathbb{T}_{X_2}$ and $\mathcal{T}'\in \mathbb{T}_{X}$.\\
 \textbf{Case 1}; $\mathcal{T}_1$ is connected, we have
 \begin{align*}
  <e_{\mathcal{T}_1}\star e_{\mathcal{T}_2}, \mathcal{T}'>&=<e_{\mathcal{T}_1 \searrow \mathcal{B}\mathcal{T}_2}, \mathcal{B}\mathcal{T}'>\\
  &=\sum_{v\in X_2\sqcup \{*\}} <e_{\mathcal{T}_1 \searrow_v \mathcal{B}\mathcal{T}_2}, \mathcal{B}\mathcal{T}'>\\
  &=\sum \limits_{\underset{\scriptstyle\mathcal{B}\mathcal{T}'=\mathcal{T}_1 \searrow_v \mathcal{B}\mathcal{T}_2}{v\in X_2\sqcup \{*\} }} \sigma(\mathcal{B}\mathcal{T}').
  \end{align*} 
  Let us consider the set $B=\{v\in X_2\sqcup \{*\}, \mathcal{B}\mathcal{T}'=\mathcal{T}_1 \searrow_v \mathcal{B}\mathcal{T}_2 \}$,
  we show that $\hbox{\hbox{Aut}}(\mathcal{B}\mathcal{T}_2)$ acts transitively on $B$.
  We define the map 
  \begin{eqnarray*}
		\Phi_1: \hbox{\hbox{Aut}}(\mathcal{B}\mathcal{T}_2)\times B&\longrightarrow& B\\
		(\varphi, v)&\longmapsto& \varphi(v).
	\end{eqnarray*}
	Let $v\in B$, if $v=*$, then $\varphi(v)=v$. If not then $\mathcal{T}_1 \searrow_{\varphi(v)} \mathcal{B}\mathcal{T}_2=\mathcal{T}_1 \searrow_{\varphi(v)} \varphi(\mathcal{B}\mathcal{T}_2)=h(\mathcal{T}_1 \searrow_v \mathcal{B}\mathcal{T}_2)$, where $h_{|X_1}=Id$ and if $v\in X_2 \sqcup \{*\}$, $h(v)=\varphi(v)$, where $\varphi \in \hbox{Aut}(\mathcal{B}\mathcal{T}_2)$. It is clear that $h\in \hbox{Aut}(\mathcal{B}\mathcal{T}')$, 
	then
	$\mathcal{T}_1 \searrow_{\varphi(v)} \mathcal{B}\mathcal{T}_2=h(\mathcal{B}\mathcal{T}')=\mathcal{B}\mathcal{T}'$.
	Then $\Phi_1$ is well defined.\\
	Moreover for all $v\in B$, $Id(v)=v$, and for all $\varphi$, $\varphi'\in \hbox{\hbox{Aut}}(\mathcal{B}\mathcal{T}_2)$,
	$\Phi_1 \big(\varphi, \varphi'(v)  \big)=\varphi(\varphi'(v))=(\varphi \varphi')(v)$. Then $\Phi_1$ is an action.\\
	Now to show that $\Phi_1$ is transitive,
	let $u, v \in B$, and let us define $f:X\sqcup \{*\}\longrightarrow X\sqcup \{*\}$ by\\
	$f(u)=v$, $f(v)=u$, and for all $w\in X\sqcup \{*\} \backslash \{u, v\}, f(w)=w$, it is clear that $f\in \hbox{\hbox{Aut}}(\mathcal{B}\mathcal{T}')$.\\
	If we take $\varphi: X_2\sqcup \{*\}\longrightarrow X_2\sqcup \{*\}$, defined by $\varphi=f_{|X_2\sqcup \{*\}}$, so we have, $\varphi \in \hbox{\hbox{Aut}}(\mathcal{B}\mathcal{T}_2)$, and $\varphi(u)=v$, then $\Phi_1$ is transitive. Then $B=\hbox{\hbox{Aut}}(\mathcal{B}\mathcal{T}_2)\cdot v, \hbox{ for all } v\in B$.\\
	For all $v\in B$, we call the stabilizer of $v$ the set: 
	$$ \hbox{\hbox{Aut}}(\mathcal{B}\mathcal{T}_2)_v=\{ \varphi \in \hbox{\hbox{Aut}}(\mathcal{B}\mathcal{T}_2), \varphi(v)=v\}.$$
	And since $|(\hbox{\hbox{Aut}}(\mathcal{B}\mathcal{T}_2)|$ is finite, then $|B|=|\hbox{\hbox{Aut}}(\mathcal{B}\mathcal{T}_2)\cdot v|=\dfrac{|\hbox{\hbox{Aut}}(\mathcal{B}\mathcal{T}_2)|}{| \hbox{\hbox{Aut}}(\mathcal{B}\mathcal{T}_2)_v |}$, for all $v\in B$.\\
	Then $$|B|=\dfrac{|\sigma(\mathcal{T}_2)|}{| \hbox{\hbox{Aut}}(\mathcal{T}_2)_v |}, \hbox{ for all } v\in B\backslash \{*\}.$$
	 On the other hand
  \begin{align*}
    <e_{\mathcal{T}_1}\otimes e_{\mathcal{T}_2}, \Delta_{\searrow}(\mathcal{T}')>&=\sum_{Y\overline{\in}\mathcal{T}'}<e_{\mathcal{T}_1}, \mathcal{T}^{\prime}_{|Y}><e_{\mathcal{T}_2}, \mathcal{T}^{\prime}_{|X\backslash Y}>\\
    &=\sum \limits_{\underset{\scriptstyle\mathcal{T}_1=\mathcal{T}_{|Y}^{\prime},\, \mathcal{T}_2=\mathcal{T}^{\prime}_{|X\backslash Y}}{Y\overline{\in}\mathcal{T}'  }} \sigma({\mathcal{T}_1})\sigma({\mathcal{T}_2}).
  \end{align*}
  Let us consider the set
  $$A=\{ v\in X, \hbox{ the cut above } v \hbox{ give the term of } \Delta_{\searrow}(\mathcal{T}') \hbox{ isomorphic to }
  \mathcal{T}_1\otimes \mathcal{T}_2 \},$$
   we notice that $A\cap B\backslash \{*\} \neq \varnothing$. \\
    We show that $\hbox{Aut}(\mathcal{T}^{\prime})$ acts transitively on $A$.
  We define the map 
  \begin{eqnarray*}
		\Phi_2: \hbox{Aut}(\mathcal{T}')\times A&\longrightarrow& A\\
		(\varphi, v)&\longmapsto& \varphi(v).
	\end{eqnarray*}
	  Let $v\in A$ then 
  $\mathcal{T}_{|X_1}^{\prime} \hbox{ isomorphic to }\mathcal{T}_1$ and $\mathcal{T}_{|X_2}^{\prime}\hbox{ isomorphic to }\mathcal{T}_2$, then for all $\varphi \in \hbox{Aut}(\mathcal{T}')$, $\varphi(\mathcal{T}_{|X_i}^{\prime})$ isomorph to 
  $\mathcal{T}_i,
  i\in \{ 1, 2 \}$, and  $\Delta_{\searrow}\big( \varphi(\mathcal{T}')\big) \hbox{ isomorphic to } \Delta_{\searrow}( \mathcal{T}')$,\\
  then for all $\varphi \in \hbox{\hbox{Aut}}(\mathcal{T}'), \hbox{ the cut above } \varphi(v) \hbox{ give the term of } \Delta_{\searrow}(\mathcal{T}') \hbox{ isomorphic to } \mathcal{T}_1\otimes \mathcal{T}_2$, then\\
  $\varphi(v)\in A$. Then $\Phi_2$ is well defined.\\
	Let $v\in A$, $\hbox{ and } \varphi, \varphi' \in \hbox{Aut}(\mathcal{T}')$, then $Id(v)=v$ and $\Phi_2 \big( \varphi, \varphi'(v)  \big)=\varphi(\varphi'(v))=(\varphi \varphi')(v)$. Then $\Phi_2$ is an action.\\
	Let $u, v \in A$, we defined $f:X\longrightarrow X$ by:
	$f(u)=v$, $f(v)=u$, and for all $w\notin \{u, v\},
	f(w)=w$, it is clear that $f\in \hbox{Aut}(\mathcal{T}')$, then $\Phi_2$ is transitive.
		And since $|\hbox{Aut}(\mathcal{T}')|$ is finite, then
		$$|A|=\dfrac{|\hbox{Aut}(\mathcal{T}')|}{| \hbox{Aut}(\mathcal{T}')_v |}, \hbox{ for all } v\in A.$$
	Let $v\in A$, then $| \hbox{Aut}(\mathcal{T}')_v |=| \hbox{Aut}(\mathcal{T}_1)\hbox{Aut}(\mathcal{T}_2)_v |=| \hbox{Aut}(\mathcal{T}_1)| |\hbox{Aut}(\mathcal{T}_2)_v |$. Then\\
	$$|\hbox{Aut}(\mathcal{T}_2)_v |= \dfrac{|\hbox{Aut}(\mathcal{T}')|}{|A|| \hbox{Aut}(\mathcal{T}_1)|}, \hbox{ for all } v\in A,$$
	that since $A\cap B\backslash \{*\} \neq \varnothing$, there exists $v\in A\cap B\backslash \{*\}$ such that 
	$$|\hbox{Aut}(\mathcal{T}_2)_v |= \dfrac{|\hbox{Aut}(\mathcal{T}')|}{|A|| \hbox{Aut}(\mathcal{T}_1)|}=\dfrac{|\hbox{Aut}(\mathcal{T}_2)|}{|B|}$$
	then 
	$$\dfrac{\sigma(\mathcal{T}')}{|A| \sigma(\mathcal{T}_1)}=\dfrac{\sigma(\mathcal{T}_2)}{|B|},$$
	then
	$$|B|\sigma(\mathcal{T}')=|A| \sigma(\mathcal{T}_1)\sigma(\mathcal{T}_2).$$
	We define $A'=\{ Y, Y\overline{\in}\mathcal{T}', \mathcal{T}_1=\mathcal{T}^{\prime}_Y, \mathcal{T}_2=\mathcal{T}^{\prime}_{|X\backslash Y} \}$,
	we notice that $|A|=|A'|$.
	Then
	\begin{align*}
  <e_{\mathcal{T}_1}\star e_{\mathcal{T}_2}, \mathcal{T}'>&=\sum \limits_{\underset{\scriptstyle\mathcal{B}\mathcal{T}'=\mathcal{T}_1 \searrow_v \mathcal{B}\mathcal{T}_2}{v\in X_2\sqcup \{*\} }} \sigma(\mathcal{B}\mathcal{T}')\\
  &=|B|\sigma(\mathcal{B}\mathcal{T}')\\
  &=|B|\sigma(\mathcal{T}')\\
  &=|A'| \sigma(\mathcal{T}_1)\sigma(\mathcal{T}_2)\\
  &=\sum \limits_{\underset{\scriptstyle\mathcal{T}_1=\mathcal{T}^{\prime}_{|Y},\, \mathcal{T}_2=\mathcal{T}^{\prime}_{|X\backslash Y}}{Y\overline{\in}\mathcal{T}'  }} \sigma(\mathcal{T}_1)\sigma(\mathcal{T}_2)\\
  &=<e_{\mathcal{T}_1}\otimes e_{\mathcal{T}_2}, \Delta_{\searrow}(\mathcal{T}')>.
  \end{align*} 
  \\
  \textbf{Case2}; $\mathcal{T}_1$ not connected.\\
  Let $\mathcal{T}_1=\mathcal{T}_{1,1}...\mathcal{T}_{1,n}\in \mathbb{T}_{X_1}$, where $\mathcal{T}_{1,i}$ is connected for all $i\in [n]$. And let  $\mathcal{T}_2\in \mathbb{T}_{X_2}$, $\mathcal{T}'\in \mathbb{T}_{X}$, we have
\begin{align*}
  <e_{\mathcal{T}_1}\star e_{\mathcal{T}_2}, \mathcal{T}'>&=<e_{\mathcal{T}_1 \searrow \mathcal{B}\mathcal{T}_2}, \mathcal{B}\mathcal{T}'>\\
  &=\sum_{\underline{v}=(v_1,..., v_n)\in X_2} <e_{\mathcal{T}_1 \searrow_{\underline{v}} \mathcal{B}\mathcal{T}_2}, \mathcal{B}\mathcal{T}'>\\
  &=\sum \limits_{\underset{\scriptstyle\mathcal{B}\mathcal{T}'=\mathcal{T}_{1,1} \searrow_{v_1}(\mathcal{T}_{1,2} \searrow_{v_2}...(\mathcal{T}_{1,n} \searrow_{v_n} \mathcal{B}\mathcal{T}_2)...)}{\underline{v}=(v_1,..., v_n)\in X_2}} \sigma(\mathcal{B}\mathcal{T}').
  \end{align*}
   Let us consider the set $\underline{B}=\{ \underline{v}=(v_1,..., v_n)\in X_2\sqcup \{*\}, \mathcal{B}\mathcal{T}'=\mathcal{T}_1 \searrow_{\underline{v}} \mathcal{B}\mathcal{T}_2 \}$.\\
 $\hbox{Aut}(\mathcal{T}_2)$ acts transitively on $\underline{B}$ by the action. (In the same way that we used to show that $\Phi_1$ is a transitive action.)
 \begin{eqnarray*}
		\Phi_3: \hbox{Aut}(\mathcal{T}_2)\times \underline{B}&\longrightarrow& \underline{B}\\
		(\varphi, \underline{v})&\longmapsto& \varphi(\underline{v}).
	\end{eqnarray*}
	And since $|\hbox{Aut}(\mathcal{T}_2)|$ is finite, then 
	$|\underline{B}|=\dfrac{\sigma(\mathcal{T}_2)}{| \hbox{Aut}(\mathcal{T}_2)_{\underline{v}} |}$, where $\underline{v}\in \underline{B}$.\\
	On the other hand
  \begin{align*}
    <e_{\mathcal{T}_1}\otimes e_{\mathcal{T}_2}, \Delta_{\searrow}(\mathcal{T}')>&=\sum_{Y\overline{\in}\mathcal{T}'}<e_{\mathcal{T}_1}, \mathcal{T}^{\prime}_{|Y}><e_{\mathcal{T}_2}, \mathcal{T}^{\prime}_{|X\backslash Y}>\\
    &=\sum \limits_{\underset{\scriptstyle\mathcal{T}_1=\mathcal{T}^{\prime}_{|Y},\, \mathcal{T}_2=\mathcal{T}^{\prime}_{|X\backslash Y}}{Y\overline{\in}\mathcal{T}'  }} \sigma({\mathcal{T}_1})\sigma({\mathcal{T}_2}).
  \end{align*}
  Let us consider the set\\
  $\underline{A}=\{ \underline{v}=(v_1,...,v_n)\in X, \hbox{ the cut above } \underline{v} \hbox{ give the term of } \Delta_{\searrow}(\mathcal{T}') \hbox{ isomorphic } \hbox{ to }\mathcal{T}_1\otimes \mathcal{T}_2 \}$.\\
   We notice that $\underline{A}\cap \underline{B}_{|X_2} \neq \varnothing$.\\
$\hbox{Aut}(\mathcal{T}')$ acts transitively on $\underline{A}$ by the action. (In the same way that we used to show that $\Phi_2$ is a transitive action.)
 \begin{eqnarray*}
\Phi_4: \hbox{Aut}(\mathcal{T}')\times \underline{A}&\longrightarrow& \underline{A}\\
(\varphi, \underline{v})&\longmapsto& \varphi(\underline{v}).
\end{eqnarray*}	
		And since $|\hbox{Aut}(\mathcal{T}')|$ is finite, then 
	$|\underline{A}|=\dfrac{\sigma(\mathcal{T}')}{| \hbox{Aut}(\mathcal{T}')_{\underline{v}} |}$, where $\underline{v}\in \underline{A}$.\\
	If $\underline{v}\in \underline{A}$, then $| \hbox{Aut}(\mathcal{T}')_{\underline{v}} |=| \hbox{Aut}(\mathcal{T}_1)\hbox{Aut}(\mathcal{T}_2)_{\underline{v}} |=| \hbox{Aut}(\mathcal{T}_1)| | \hbox{Aut}(\mathcal{T}_2)_{\underline{v}} |$,\\
	then $|\hbox{Aut}(\mathcal{T}_2)_{\underline{v}} |= \dfrac{|\hbox{Aut}(\mathcal{T}')|}{|\underline{A}|| \hbox{Aut}(\mathcal{T}_1)|}$ for all $v\in \underline{A}$,\\
	then 
	$$|\hbox{Aut}(\mathcal{T}_2)_{\underline{v}} |= \dfrac{|\hbox{Aut}(\mathcal{T}')|}{|\underline{A}|| \hbox{Aut}(\mathcal{T}_1)|}=\dfrac{|\hbox{Aut}(\mathcal{T}_2)|}{|\underline{B}|}, \hbox{ for all }v\in \underline{A}\cap \underline{B}\cap X_2,$$ 
	then  $\dfrac{\sigma(\mathcal{T}')}{|\underline{A}| \sigma(\mathcal{T}_1)}=\dfrac{\sigma(\mathcal{T}_2)}{|\underline{B}|}$. 
	Then $|\underline{B}|\sigma(\mathcal{T}')=|\underline{A}| \sigma(\mathcal{T}_1)\sigma(\mathcal{T}_2).$

	We define $\underline{A}'=\{ Y, Y\overline{\in}\mathcal{T}', \mathcal{T}_1=\mathcal{T}^{\prime}_{|Y} \hbox{ and } \mathcal{T}_2=\mathcal{T}^{\prime}_{|X\backslash Y} \}$,
	we notice that $|\underline{A}|=|\underline{A}'|$.
	Then
	\begin{align*}
	 <e_{\mathcal{T}_1}\star e_{\mathcal{T}_2}, \mathcal{T}'>&=<e_{\mathcal{T}_1 \searrow \mathcal{B}\mathcal{T}_2}, \mathcal{B}\mathcal{T}'>\\
  &=\sum_{\underline{v}=(v_1,..., v_n)\in X_2} <e_{\mathcal{T}_1 \searrow_{\underline{v}} \mathcal{B}\mathcal{T}_2}, \mathcal{B}\mathcal{T}'>\\
  &=\sum \limits_{\underset{\scriptstyle\mathcal{B}\mathcal{T}'=\mathcal{T}_{1,1} \searrow_{v_1}(\mathcal{T}_{1,2} \searrow_{v_2}...(\mathcal{T}_{1,n} \searrow_{v_n} \mathcal{B}\mathcal{T}_2)...)}{\underline{v}=(v_1,..., v_n)\in X_2}} \sigma(\mathcal{B}\mathcal{T}')\\
  &=|\underline{B}|\sigma(\mathcal{T}')\\
  &=|\underline{A}'| \sigma(\mathcal{T}_1)\sigma(\mathcal{T}_2)\\
  &=\sum \limits_{\underset{\scriptstyle\mathcal{T}_1=\mathcal{T}^{\prime}_{|Y},\, \mathcal{T}_2=\mathcal{T}^{\prime}_{|X\backslash Y}}{Y\overline{\in}\mathcal{T}'  }} \sigma({\mathcal{T}_1})\sigma({\mathcal{T}_2})\\
  &=\sum_{Y\overline{\in}\mathcal{T}'}<e_{\mathcal{T}_1}, \mathcal{T}^{\prime}_{|Y}><e_{\mathcal{T}_2}, \mathcal{T}^{\prime}_{|X\backslash Y}>\\
  &=<e_{\mathcal{T}_1}\otimes e_{\mathcal{T}_2}, \Delta_{\searrow}(\mathcal{T}')>.
  \end{align*}
  \end{proof}
 %%%%%%%%%%%%%%
 %%%%%
 \section{Relation between $\searrow$ and $\nearrow$}\label{Relation between}
 In this part we define the law $\nearrow$ on $\mathbb{V}$ by: 
  For all $\mathcal{T}=(X, \leq_{\mathcal{T}})$ and $\mathcal{S}=(Y, \leq_{\mathcal{S}})$ be two finite connected topological spaces,
 \begin{align*}
  \mathcal{T}\nearrow \mathcal{S}:=j\big( j(\mathcal{T})\searrow j(\mathcal{S}) \big),  
\end{align*}
 where $j$ is the involution which transforms $\le$ into $\ge$. In other words, open subsets in $\mathcal{T}$ are closed subsets in $j(\mathcal{T})$ and vice-versa.
 In particular, it is obvious that $ (\mathbb{V},\nearrow) $ is a twisted pre-Lie algebra due to the fact that $(\mathbb{V},\searrow)$ is a twisted pre-Lie algebra.
 \begin{definition}
     For any finite set $A$ and for any pair of parts $A_1$, $A_2$ of $A$ with $A_1\cap A_2=\varnothing$, we define $\Psi_{A_1,\, A_2}:\mathbb T_A\to\mathbb T_A$,
     as follows: for any topology $\mathcal{T}\in \mathbb{T}_A$, $\Psi_{A_1,\, A_2}(\mathcal T)=(A, \le)$, where $\le$ defined by
     \begin{itemize}
    \item If $a\in A_1$, and $b\in A_2$ then $a$ and $b$ are uncomparable,
    \item otherwise, we have $a\le b$ if and only if $a\le_{\mathcal T} b$.
\end{itemize}
 \end{definition}
 \begin{proposition}
  For any finite set $A$ and for any pair of parts $A_1$, $A_2$ of $A$ with $A_1\cap A_2=\varnothing$,  and let $\mathcal{T}\in \mathbb{T}_A$ then\\
 1) $\Psi_{A_1,\, A_2}(\mathcal T)=(A, \le)$ is a finite topological space.\\
 2) $\Psi_{A_1,\, A_2}$ is a projector.
 \end{proposition}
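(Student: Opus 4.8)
The plan is to check, for part (1), that the binary relation $\le$ attached to $\Psi_{A_1,\,A_2}(\mathcal T)$ is a preorder on $A$ --- equivalently, by the Alexandroff correspondence recalled in the introduction, that it is the quasi-order of a (unique) topology on $A$ --- and, for part (2), that the transformation ``erase every order relation running from $A_1$ to $A_2$'' leaves its output unchanged when applied a second time, so that the induced linear endomorphism of $\mathbb T_A$ squares to itself.

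First I would dispatch reflexivity: for $a\in A$, the hypothesis $A_1\cap A_2=\varnothing$ prevents $a$ from lying in both $A_1$ and $A_2$, so the pair $(a,a)$ is governed by the second item of the definition, whence $a\le a\iff a\le_{\mathcal T}a$, and the right-hand side holds since $\le_{\mathcal T}$ is reflexive. For transitivity, suppose $a\le b$ and $b\le c$. A relation $a\le b$ can only come from the second item --- the first item declares $a$ and $b$ incomparable --- so $a\le_{\mathcal T}b$, and likewise $b\le_{\mathcal T}c$; transitivity of $\le_{\mathcal T}$ then gives $a\le_{\mathcal T}c$. It remains to verify that $(a,c)$ is not of the forbidden type ``$a\in A_1$ and $c\in A_2$'', for otherwise the second item forces $a\le c$. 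A case distinction on where the intermediate vertex $b$ lies is the natural tool: $b\in A_2$ together with $a\in A_1$ contradicts $a\le b$, and $b\in A_1$ together with $c\in A_2$ contradicts $b\le c$. The sub-case to watch is $b\notin A_1\cup A_2$, and this is where I expect the real content of part (1) to sit.

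For part (2) I would argue pointwise on topologies and then extend by linearity. Set $\mathcal T'=\Psi_{A_1,\,A_2}(\mathcal T)=(A,\le)$ and let $(A,\preceq)$ be the topological space obtained by applying $\Psi_{A_1,\,A_2}$ once more, now with $\le$ playing the role of $\le_{\mathcal T}$. On a pair $(a,b)$ with $a\in A_1$ and $b\in A_2$, both $\le$ and $\preceq$ declare $a$ and $b$ incomparable; on every other pair, the second item gives $a\preceq b\iff a\le b$. Hence $\preceq$ and $\le$ coincide, that is, $\Psi_{A_1,\,A_2}\bigl(\Psi_{A_1,\,A_2}(\mathcal T)\bigr)=\Psi_{A_1,\,A_2}(\mathcal T)$ for every topology $\mathcal T$; extending over the basis of topologies on $A$ yields $\Psi_{A_1,\,A_2}\circ\Psi_{A_1,\,A_2}=\Psi_{A_1,\,A_2}$ on $\mathbb T_A$, which is precisely the statement that $\Psi_{A_1,\,A_2}$ is a projector. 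Observe that part (2) formally rests on part (1): one needs $\Psi_{A_1,\,A_2}$ to be a well-defined endomorphism of $\mathbb T_A$ before composing it with itself makes sense.
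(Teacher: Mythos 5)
Your treatment of part (2) is correct and coincides with the paper's: one compares, pair by pair, the relation produced by one and by two applications of $\Psi_{A_1,\,A_2}$ and sees they agree. The problem is in part (1), and it sits exactly where you flagged it. In the sub-case $a\in A_1$, $c\in A_2$, $b\notin A_1\cup A_2$ there is no contradiction to be had: both $a\le b$ and $b\le c$ are inherited from $\le_{\mathcal T}$ via the second clause, while the first clause forbids $a\le c$, so transitivity genuinely fails. Concretely, take $A=\{a,b,c\}$, $A_1=\{a\}$, $A_2=\{c\}$ and $\mathcal T$ the chain $a\le_{\mathcal T}b\le_{\mathcal T}c$; then $\Psi_{A_1,\,A_2}(\mathcal T)$ retains $a\le b$ and $b\le c$ but not $a\le c$, and is not a preorder. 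So with $A_1$, $A_2$ arbitrary disjoint subsets of $A$, as in the statement, the claim is false, and the case you left open cannot be closed by any argument.

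The paper's own proof escapes this only by silently strengthening the hypothesis: it opens with ``Let $A=A_1\sqcup A_2$'', i.e.\ it assumes the two parts cover $A$. Under that assumption any chain $x\le y\le z$ must stay inside a single part (a relation between an $A_1$-element and an $A_2$-element is excluded outright by the first clause), your troublesome sub-case is vacuous, and the two cases the paper lists are exhaustive. Be aware, though, that this extra hypothesis appears neither in the statement nor in the main application (Theorem \ref{th.}, where $\Psi_{X,\,Z}$ acts on $X\sqcup Y\sqcup Z$); there transitivity survives only because, for the specific preorders of the form $(\mathcal T\nearrow^s\mathcal S)\searrow_u\mathcal U$, no chain joins $X$ to $Z$ through $Y$. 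To make your proof (and the proposition) go through in the stated generality one must either add the hypothesis $A=A_1\sqcup A_2$, or define $\Psi_{A_1,\,A_2}$ using the transitive closure of the modified relation, or restrict to topologies admitting no $A_1$--$A_2$ chain through the complement of $A_1\cup A_2$.
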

 \begin{proof}
 1) Let $A=A_1\sqcup A_2$, with $A_1\cap A_2=\varnothing$, and let $\mathcal{T}\in \mathbb{T}_A$, we must show that $\leq$ is a preorder relation on $A$:\\
 \textbf{Reflexivity}; Let $x\in A$, then $x\in A_1$ or $x\in A_2$.\\
If $x\in A_1$, we have $x\leq_{\mathcal{T}} x$ then $x\leq x$,
same thing if $x\in A_2$.\\
\textbf{Transitivity}; Let $x, y, z\in A$ such that $x\leq y$ and $y\leq z$. So we have two possible cases:
\begin{itemize}
    \item First case; $x, y, z\in A_1$, and $(x\leq y$ and $y\leq z)$, then $(x\leq_{\mathcal{T}} y \hbox{ and } y\le_{\mathcal{T}} z)$.\\
Since $\leq_{\mathcal{T}}$ is transitive, then $x\leq_{\mathcal{T}} z$, then $x\leq z$.
    \item Second case; $x, y, z\in A_2$, likewise the first case.
\end{itemize}
2) Let $\mathcal{T}\in \mathbb{T}_A$, we must show that $\Psi_{A_1,\, A_2}(\mathcal T)=\Psi^{2}_{A_1,\, A_2}(\mathcal T)$.\\
If not $\mathcal{T'}=(A, \le')=\Psi_{A_1,\, A_2}(\mathcal T)$, then $\Psi^{2}_{A_1,\, A_2}(\mathcal T)=\Psi_{A_1,\, A_2}(\mathcal T')=(A, \le)$, where $\le$ defined by
\begin{itemize}
    \item If $a\in A_1$, and $b\in A_2$ then $a$ and $b$ are uncomparable,
    \item otherwise, we have $a\le b$ if and only if $a\le' b$.
\end{itemize}
And since we have, $a\le' b$ if and only if $a\le_{\mathcal{T}} b$, then $\le$ defined by
\begin{itemize}
    \item If $a\in A_1$, and $b\in A_2$ then $a$ and $b$ are uncomparable,
    \item otherwise, we have $a\le b$ if and only if $a\le_{\mathcal{T}} b$.
\end{itemize}
Then $\Psi_{A_1,\, A_2}=\Psi^{2}_{A_1,\, A_2}$.
 \end{proof}
 \begin{theorem}\label{th.}
 	Let $\mathcal{T}=(X, \leq_{\mathcal{T}})$, $\mathcal{S}=(Y, \leq_{\mathcal{S}})$ and $\mathcal{U}=(Z, \leq_{\mathcal{U}})$ be three finite connected topological spaces, and let $s\in Y$, $u\in Z$. The following diagram is commutative:
	$$
		\xymatrix{
			\mathbb{V}_X \otimes \mathbb{V}_Y \otimes \mathbb{V}_Z \ar[rr]^{id \otimes \searrow_u} \ar[d]_{\nearrow^{s} \otimes id} && \mathbb{V}_X \otimes \mathbb{V}_{Y\sqcup Z} \ar[dd]^{\nearrow^s}\\
			\mathbb{V}_{X\sqcup Y} \otimes \mathbb{V}_Z \ar[d]_{\searrow_u } && \\
			\mathbb{V}_{X\sqcup Y\sqcup Z} \ar[rr]_{\Psi_{X,\, Z}} && \mathbb{V}_{X\sqcup Y\sqcup Z}
		}
			$$
 \end{theorem}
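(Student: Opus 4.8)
The plan is to verify commutativity by computing both composites explicitly, as preorders on $X\sqcup Y\sqcup Z$, and checking that they coincide; concretely, what must be shown is the identity
\[
\mathcal{T}\nearrow^{s}\big(\mathcal{S}\searrow_{u}\mathcal{U}\big)=\Psi_{X,Z}\Big(\big(\mathcal{T}\nearrow^{s}\mathcal{S}\big)\searrow_{u}\mathcal{U}\Big).
\]
First I would spell out $\nearrow^{s}$ concretely. Unwinding $\mathcal{T}\nearrow^{s}\mathcal{S}=j\big(j(\mathcal{T})\searrow_{s}j(\mathcal{S})\big)$, using the definition of $\searrow_{s}$ and the fact that $j$ reverses the preorder, one gets that $\mathcal{T}\nearrow^{s}\mathcal{S}=(X\sqcup Y,\le)$, where $\le$ restricts to $\le_{\mathcal{T}}$ on $X$ and to $\le_{\mathcal{S}}$ on $Y$, where $x\le y$ for $x\in X$, $y\in Y$ if and only if $s\le_{\mathcal{S}}y$, and where no element of $Y$ lies below an element of $X$. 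Thus $\nearrow^{s}$ inserts $\mathcal{T}$ underneath the up-set $U_{s}:=\{y\in Y:\ s\le_{\mathcal{S}}y\}$ of $s$ and changes nothing else.

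Next I would compute the bottom-left composite. Grafting the first factor of $\big(\mathcal{T}\nearrow^{s}\mathcal{S}\big)\searrow_{u}\mathcal{U}$ above the vertex $u$ of $\mathcal{U}$ yields a preorder on $X\sqcup Y\sqcup Z$ which restricts to the $\nearrow^{s}$-order on $X\sqcup Y$, to $\le_{\mathcal{U}}$ on $Z$, and whose only extra comparabilities are $z\le p$ for every $z\in Z$ with $z\le_{\mathcal{U}}u$ and every $p\in X\sqcup Y$. Applying $\Psi_{X,Z}$ removes precisely the comparabilities between $X$ and $Z$; since the only ones occurring are the relations $z\le x$ with $z\le_{\mathcal{U}}u$ and $x\in X$, the result is the preorder on $X\sqcup Y\sqcup Z$ that restricts to $\mathcal{T},\mathcal{S},\mathcal{U}$ on $X,Y,Z$, puts $X$ below $U_{s}\subseteq Y$, puts every $z$ with $z\le_{\mathcal{U}}u$ below all of $Y$, and has no comparability between $X$ and $Z$. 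At this point one must also check that the deletion keeps the relation transitive; because $X\sqcup Y\sqcup Z$ is not merely $X\sqcup Z$, this does not follow just by quoting the proposition on $\Psi_{A_{1},A_{2}}$ and is checked directly: in $\big(\mathcal{T}\nearrow^{s}\mathcal{S}\big)\searrow_{u}\mathcal{U}$ no element of $Y$ lies below an element of $X$, and no element of $X\sqcup Y$ lies below an element of $Z$, so a deleted pair $z\le x$ cannot be recovered through an intermediate element.

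Then I would compute the top-right composite. In $\mathcal{S}\searrow_{u}\mathcal{U}$, a space on $Y\sqcup Z$, the restrictions to $Y$ and $Z$ are $\le_{\mathcal{S}}$ and $\le_{\mathcal{U}}$, every $z$ with $z\le_{\mathcal{U}}u$ lies below all of $Y$, and no element of $Y$ lies below an element of $Z$. The key observation is then that the up-set of $s$ computed inside $\mathcal{S}\searrow_{u}\mathcal{U}$ equals the up-set $U_{s}$ of $s$ computed inside $\mathcal{S}$, hence lies in $Y$. Therefore $\mathcal{T}\nearrow^{s}(\mathcal{S}\searrow_{u}\mathcal{U})$ places $X$ underneath $U_{s}\subseteq Y$ only, never relates an element of $X$ to an element of $Z$, and restricts to $\mathcal{S}\searrow_{u}\mathcal{U}$ on $Y\sqcup Z$. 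This is term for term the description of the bottom-left composite found above, so the two composites agree and the diagram commutes.

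The main obstacle is the asymmetry between the two paths: going down the left side genuinely creates relations $z\le x$ from $Z$ into $X$, because grafting $\mathcal{U}$ below $u$ pushes the part $\{z:\ z\le_{\mathcal{U}}u\}$ of $\mathcal{U}$ underneath \emph{all} of $X\sqcup Y$, whereas going across the top never does, since the up-set of $s$ cannot reach $Z$. The point of the theorem is exactly that $\Psi_{X,Z}$ is the surgery which removes this discrepancy while leaving intact the $Z$-to-$Y$ relations common to both paths; the care lies in matching the three pieces of the two preorders — the orders within $X$, $Y$, $Z$; the $X$-to-$Y$ relations; and the $Z$-to-$Y$ relations — and in the routine but slightly delicate verification that the $\Psi_{X,Z}$-output is still a genuine preorder.
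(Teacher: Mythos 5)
Your proposal is correct and follows essentially the same route as the paper: compute both composites explicitly as preorders on $X\sqcup Y\sqcup Z$, observe that the only discrepancy is the set of relations $z\le x$ with $z\le_{\mathcal U}u$, $x\in X$, created by the left-hand path, and check that $\Psi_{X,Z}$ deletes exactly these. Your additional verification that the deleted relation remains transitive (which does not follow formally from the paper's proposition on $\Psi_{A_1,A_2}$, since $X\sqcup Z\neq X\sqcup Y\sqcup Z$) is a welcome point of extra care rather than a different method.
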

	\begin{proof}
	%%%%%%%%%%%%%%%
	Let $\mathcal{T}=(X, \leq_{\mathcal{T}})$, $\mathcal{S}=(Y, \leq_{\mathcal{S}})$ and $\mathcal{U}=(Z, \leq_{\mathcal{U}})$ be three finite connected topological spaces, and let $s\in Y$, $u\in Z$, then for $\mathcal{W}=(X\sqcup Y\sqcup Z, \le_{\mathcal{W}})=(\mathcal{T}\nearrow^s \mathcal{S})\searrow_u \mathcal{U}$, we have
	\begin{itemize}
    \item for all $x \in X$, $x\leq_{\mathcal{W}}s$,
    \item and for all $ y \in X\sqcup Y$, $u\leq_{\mathcal{W}}y$,
\end{itemize}
then 
\begin{itemize}
    \item for all $x \in X$, $x\leq_{\Psi_{X, Z}(\mathcal{W})}s$,
    \item and for all $ y \in Y$, $u\leq_{\Psi_{X,\, Z}(\mathcal{W})}y$,
\end{itemize}	
then $\Psi_{X,\, Z}(\mathcal{W})$ is connected.\\
moreover we have
$\mathcal{T}\nearrow^s \mathcal{S}=(X\sqcup Y, \leq^{'}_1)$, with $\leq^{'}_1$ defined on $X\sqcup Y$ by:\\
	$x, y\in X\sqcup Y$ et $x\leq^{'}_1 y$ if and only if:
\begin{itemize}
    \item Either $x, y \in X$ and $x\leq_{\mathcal{T}}y$,
    \item or $x, y \in Y$ and $x\leq_{\mathcal{S}}y$,
    \item or $x\in X$, $y\in Y$ and $s\leq_{\mathcal{S}} y$,
\end{itemize}
then $(\mathcal{T}\nearrow^s \mathcal{S})\searrow_u \mathcal{U}=(X\sqcup Y\sqcup Z, \leq_{\mathcal{W}})$, with $\leq_{\mathcal{W}}$ defined on $X\sqcup Y\sqcup Z$ by:\\
	$x, y\in X\sqcup Y\sqcup Z$ et $x\leq_{\mathcal{W}} y$ if and only if:
	\begin{itemize}
    \item Either $x, y \in X\sqcup Y$ and $x\leq'_1y$,
    \item or $x, y \in Z$ and $x\leq_{\mathcal{U}} y$,
    \item or $x\in Z$, $y\in X\sqcup Y$ and $x\leq_{\mathcal{U}} u$,
\end{itemize}
then 
$x, y\in X\sqcup Y\sqcup Z$ et $x\leq_{\mathcal{W}} y$ if and only if:
	\begin{itemize}
    \item Either $x, y \in X$ and $x\leq_{\mathcal{T}}y$,
    \item or $x, y \in Y$ and $x\leq_{\mathcal{S}}y$,
    \item or $x\in X$, $y\in Y$ and $s\leq_{\mathcal{S}} y$,
    \item or $x, y \in Z$ and $x\leq_{\mathcal{U}} y$,
    \item or $x\in Z$, $y\in Y$ and $x\leq_{\mathcal{U}} u$,
    \item or $x\in Z$, $y\in X$ and $x\leq_{\mathcal{U}} u$,
\end{itemize}
if we apply $\Psi_{X,\, Z}$ to $(\mathcal{T}\nearrow^s \mathcal{S})\searrow_u \mathcal{U}$, we can eliminate the cases: $x\in Z$, $y\in X$ and $x\leq_{\mathcal{U}} u$.\\
On the other hond\\
 $\mathcal{S}\searrow_u \mathcal{U}=(Y\sqcup U, \leq_1)$, with $\leq_1$ defined on $Y\sqcup Z$ by:\\
	$x, y\in Y\sqcup Z$ et $x\leq_1 y$ if and only if:
\begin{itemize}
    \item Either $x, y \in Y$ and $x\leq_{\mathcal{S}}y$,
    \item or $x, y \in Z$ and $x\leq_{\mathcal{U}}y$,
    \item or $x\in Z$, $y\in Y$ and $x\leq_{\mathcal{U}} u$,
\end{itemize}
then $\mathcal{T}\nearrow^s(\mathcal{S}\searrow_u \mathcal{U})=(X\sqcup Y\sqcup Z, \leq)$, with $\leq$ defined on $X\sqcup Y\sqcup Z$ by:\\
	$x, y\in X\sqcup Y\sqcup Z$ et $x\leq y$ if and only if:
	\begin{itemize}
    \item Either $x, y \in X$ and $x\leq_{\mathcal{T}}y$,
    \item or $x, y \in Y\sqcup Z$ and $x\leq_1 y$,
    \item or $x\in X$, $y\in Y\sqcup Z$ and $s\leq_1 y$,
\end{itemize}
then 
$x, y\in X\sqcup Y\sqcup Z$ et $x\leq y$ if and only if:
	\begin{itemize}
    \item Either $x, y \in X$ and $x\leq_{\mathcal{T}}y$,
    \item or $x, y \in Y$ and $x\leq_{\mathcal{S}}y$,
    \item or $x, y \in Z$ and $x\leq_{\mathcal{U}}y$,
    \item or $x\in Z$, $y\in Y$ and $x\leq_{\mathcal{U}} u$,
    \item or $x\in X$, $y\in Y$ and $s\leq_{\mathcal{S}} y$.
\end{itemize}
Then the equality between
$$\mathcal{T}\nearrow^s(\mathcal{S}\searrow_u \mathcal{U})=\Psi_{X,\, Z} \big( (\mathcal{T}\nearrow^s \mathcal{S})\searrow_u \mathcal{U} \big).$$
	\end{proof}
	\begin{corollary}
	Let $\mathcal{T}=(X, \leq_{\mathcal{T}})$, $\mathcal{S}=(Y, \leq_{\mathcal{S}})$ and $\mathcal{U}=(Z, \leq_{\mathcal{U}})$ be three finite connected topological spaces, and let $s\in Y$, $u\in Z$. Then
	$$\mathcal{T}\searrow_s(\mathcal{S}\nearrow^u \mathcal{U})=\Psi_{X,\, Z} \big( (\mathcal{T}\searrow_s \mathcal{S})\nearrow^u \mathcal{U} \big).$$
 i.e, the following diagram is commutative:
	$$		
		\xymatrix{
			\mathbb{V}_X \otimes \mathbb{V}_Y \otimes \mathbb{V}_Z \ar[rr]^{id \otimes \nearrow^u} \ar[d]_{\searrow_{s} \otimes id} && \mathbb{V}_X \otimes \mathbb{V}_{Y\sqcup Z} \ar[dd]^{\searrow_s}\\
			\mathbb{V}_{X\sqcup Y} \otimes \mathbb{V}_Z \ar[d]_{\nearrow^u } && \\
			\mathbb{V}_{X\sqcup Y\sqcup Z} \ar[rr]_{\Psi_{X,\, Z}} && \mathbb{V}_{X\sqcup Y\sqcup Z}
		}
		$$
	\end{corollary}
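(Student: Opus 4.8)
The plan is to deduce this corollary from Theorem~\ref{th.} by conjugating with the involution $j$, rather than redoing the preorder bookkeeping from scratch.

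First I would record the elementary compatibilities of $j$. By definition $\mathcal A\nearrow^v\mathcal B=j\big(j(\mathcal A)\searrow_v j(\mathcal B)\big)$ for any connected finite topological spaces $\mathcal A,\mathcal B$ and any vertex $v$ of $\mathcal B$; since $j$ is an involution this is equivalent to the two identities
$$j(\mathcal A\searrow_v\mathcal B)=j(\mathcal A)\nearrow^v j(\mathcal B),\qquad j(\mathcal A\nearrow^v\mathcal B)=j(\mathcal A)\searrow_v j(\mathcal B).$$
Next I would check that $j$ commutes with $\Psi_{X,Z}$: reversing all relations does not change which pairs of elements are comparable, so deleting all comparabilities between the elements of $X$ and those of $Z$ and then reversing produces the same preorder as reversing and then deleting them; hence $j\circ\Psi_{X,Z}=\Psi_{X,Z}\circ j$ on $\mathbb T_{X\sqcup Y\sqcup Z}$. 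I would also note that $j$ fixes the underlying set, is a bijection, and preserves connectedness, so that all the spaces occurring below indeed lie in $\mathbb V$.

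Then I would apply $j$ to both sides of the asserted equality. Using the identities above,
$$j\big(\mathcal T\searrow_s(\mathcal S\nearrow^u\mathcal U)\big)=j(\mathcal T)\nearrow^s\big(j(\mathcal S)\searrow_u j(\mathcal U)\big),$$
and
$$j\Big(\Psi_{X,Z}\big((\mathcal T\searrow_s\mathcal S)\nearrow^u\mathcal U\big)\Big)=\Psi_{X,Z}\big((j(\mathcal T)\nearrow^s j(\mathcal S))\searrow_u j(\mathcal U)\big).$$
But these two right-hand sides are exactly the two sides of the identity of Theorem~\ref{th.} applied to the connected spaces $j(\mathcal T),j(\mathcal S),j(\mathcal U)$ and the vertices $s\in Y$, $u\in Z$ (which are unchanged under $j$). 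Hence they coincide, so $j$ carries the two sides of the corollary to the same topology; applying $j$ once more, equivalently using that $j$ is injective, gives $\mathcal T\searrow_s(\mathcal S\nearrow^u\mathcal U)=\Psi_{X,Z}\big((\mathcal T\searrow_s\mathcal S)\nearrow^u\mathcal U\big)$. Finally, since the displayed square simply asserts this equality for all $\mathcal T\in\mathbb V_X$, $\mathcal S\in\mathbb V_Y$, $\mathcal U\in\mathbb V_Z$, the diagram commutes.

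The only thing to be careful about is the bookkeeping of the three involution identities together with the commutation $j\,\Psi_{X,Z}=\Psi_{X,Z}\,j$; once these are in place the statement is a one-line consequence of Theorem~\ref{th.}. Should one prefer to avoid $j$, a less elegant alternative is to copy the proof of Theorem~\ref{th.} verbatim with the roles of $\searrow$ and $\nearrow$ interchanged, writing out the six defining clauses of the preorders of both $\mathcal T\searrow_s(\mathcal S\nearrow^u\mathcal U)$ and $(\mathcal T\searrow_s\mathcal S)\nearrow^u\mathcal U$ and observing that they differ exactly in the comparabilities between $X$ and $Z$ that $\Psi_{X,Z}$ erases.
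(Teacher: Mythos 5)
Your proposal is correct and follows essentially the same route as the paper: both reduce the corollary to Theorem~\ref{th.} applied to $j(\mathcal T)$, $j(\mathcal S)$, $j(\mathcal U)$ by conjugating with the involution $j$ and using $j(\mathcal A\searrow_v\mathcal B)=j(\mathcal A)\nearrow^v j(\mathcal B)$ together with the commutation of $j$ with $\Psi_{X,Z}$ (which the paper uses silently and you make explicit). The only difference is the direction of presentation — you apply $j$ to the desired identity, the paper applies $j$ to the instance of Theorem~\ref{th.} — which is immaterial.
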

	\begin{proof}
	 Let $\mathcal{T}=(X, \leq_{\mathcal{T}})$, $\mathcal{S}=(Y, \leq_{\mathcal{S}})$ and $\mathcal{U}=(Z, \leq_{\mathcal{U}})$ be three finite connected topological spaces, and let $s\in Y$, $u\in Z$.\\
	 We notice $\mathcal{T}'=j(\mathcal{T})$, $\mathcal{S}'=j(\mathcal{S})$ and $\mathcal{U}'=j(\mathcal{U})$, according to theorem \ref{th.}, we have: 
	 $$\mathcal{T}'\nearrow^s(\mathcal{S}'\searrow_u \mathcal{U}')=\Psi_{X,\, Z} \big( (\mathcal{T}'\nearrow^s \mathcal{S}')\searrow_u \mathcal{U}' \big),$$
	 then
	  \hspace{2cm}$j\big(\mathcal{T}'\nearrow^s(\mathcal{S}'\searrow_u \mathcal{U}')\big)=j[\Psi_{X,\, Z} \big( (\mathcal{T}'\nearrow^s \mathcal{S}')\searrow_u \mathcal{U}' \big) ]$,\\
	  then
	   \hspace{2cm}$j(\mathcal{T}')\searrow_sj\big((\mathcal{S}'\searrow_u \mathcal{U}')\big)=\Psi_{X,\, Z} [ j\big((\mathcal{T}'\nearrow^s \mathcal{S}')\big)\nearrow^u j(\mathcal{U}') ]$,\\
	   then 
	   \hspace{2cm}$j(\mathcal{T}')\searrow_s\big(j(\mathcal{S}')\nearrow^u j(\mathcal{U}')\big)=\Psi_{X,\, Z} [ \big(j(\mathcal{T}')\searrow_s j(\mathcal{S}')\big)\nearrow^u j(\mathcal{U}') ]$.\\
	   Then
	   $$\mathcal{T}\searrow_s(\mathcal{S}\nearrow^u \mathcal{U})=\Psi_{X,\, Z} \big( (\mathcal{T}\searrow_s \mathcal{S})\nearrow^u \mathcal{U} \big).$$
	\end{proof}
	\begin{proposition}
	Let $\mathcal{T}=(X, \leq_{\mathcal{T}})$, $\mathcal{S}=(Y, \leq_{\mathcal{S}})$ and $\mathcal{U}=(Z, \leq_{\mathcal{U}})$ be three finite connected topological spaces, then
	$$\mathcal{T}\nearrow(\mathcal{S}\searrow \mathcal{U})-\Psi_{X,\, Z} \big( (\mathcal{T}\nearrow \mathcal{S})\searrow \mathcal{U} \big)=\mathcal{S}\searrow(\mathcal{T}\nearrow \mathcal{U})-\Psi_{Y,\, Z} \big( (\mathcal{S}\searrow \mathcal{T})\nearrow \mathcal{U} \big).$$
	\end{proposition}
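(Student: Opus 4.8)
The plan is to expand both composite operations into their vertexwise pieces, use Theorem~\ref{th.} and its Corollary to cancel the pieces coming from grafting onto $\mathcal S$ (resp.\ onto $\mathcal T$) against the $\Psi$-corrected terms, and thereby reduce the statement to a single termwise commutation identity for graftings at vertices of $\mathcal U$.

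First I would treat the left-hand side. Writing $\searrow=\sum_v\searrow_v$ and $\nearrow=\sum_s\nearrow^s$, the grafting $\mathcal T\nearrow\mathcal S$ runs over vertices of $Y$ only, while $\mathcal T\nearrow(\mathcal S\searrow\mathcal U)$ runs over vertices of $Y\sqcup Z$; hence
\[
\mathcal T\nearrow(\mathcal S\searrow\mathcal U)=\sum_{u\in Z}\,\sum_{s\in Y\sqcup Z}\mathcal T\nearrow^s(\mathcal S\searrow_u\mathcal U),
\]
\[
(\mathcal T\nearrow\mathcal S)\searrow\mathcal U=\sum_{s\in Y}\,\sum_{u\in Z}(\mathcal T\nearrow^s\mathcal S)\searrow_u\mathcal U .
\]
By linearity of $\Psi_{X,\,Z}$ and Theorem~\ref{th.}, which gives $\Psi_{X,\,Z}\big((\mathcal T\nearrow^s\mathcal S)\searrow_u\mathcal U\big)=\mathcal T\nearrow^s(\mathcal S\searrow_u\mathcal U)$ for $s\in Y$ and $u\in Z$, the terms with $s\in Y$ cancel, leaving
\[
\mathcal T\nearrow(\mathcal S\searrow\mathcal U)-\Psi_{X,\,Z}\big((\mathcal T\nearrow\mathcal S)\searrow\mathcal U\big)=\sum_{s,\,u\in Z}\mathcal T\nearrow^s(\mathcal S\searrow_u\mathcal U).
\]
The same computation applied to the right-hand side, now using the Corollary with $\mathcal S$ playing the role of ``$\mathcal T$'' and $\mathcal T$ that of ``$\mathcal S$'' (so that $\Psi_{Y,\,Z}\big((\mathcal S\searrow_v\mathcal T)\nearrow^w\mathcal U\big)=\mathcal S\searrow_v(\mathcal T\nearrow^w\mathcal U)$ for $v\in X$, $w\in Z$), gives
\[
\mathcal S\searrow(\mathcal T\nearrow\mathcal U)-\Psi_{Y,\,Z}\big((\mathcal S\searrow\mathcal T)\nearrow\mathcal U\big)=\sum_{v,\,w\in Z}\mathcal S\searrow_v(\mathcal T\nearrow^w\mathcal U).
\]
So, after renaming the summation indices, it remains to prove the termwise identity $\mathcal T\nearrow^s(\mathcal S\searrow_u\mathcal U)=\mathcal S\searrow_u(\mathcal T\nearrow^s\mathcal U)$ for all $s,u\in Z$.

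This last identity I would verify by writing out both preorders on $X\sqcup Y\sqcup Z$ explicitly, in the spirit of the proof of Theorem~\ref{th.}. On each side the restriction to $X$ is $\le_{\mathcal T}$, the restriction to $Y$ is $\le_{\mathcal S}$, the restriction to $Z$ is $\le_{\mathcal U}$, no element of $Y$ lies below an element of $X\sqcup Z$, no element of $Z$ lies below an element of $X$, and the only remaining comparabilities are $x\le z\iff s\le_{\mathcal U}z$, $z\le y\iff z\le_{\mathcal U}u$, and $x\le y\iff s\le_{\mathcal U}u$ (for $x\in X$, $y\in Y$, $z\in Z$). Since the two descriptions coincide, the topologies are equal, and this finishes the proof. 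The only real work is this case check; it is routine but must be carried out carefully (the subcase $s\le_{\mathcal U}u$, in which $X$ drops below all of $Y$, is where an inattentive computation could go wrong). Note that no $\Psi$-correction is needed at this stage: the point is that grafting $\mathcal T$ onto $\mathcal U$ via $\nearrow$ at a vertex of $Z$ and grafting $\mathcal S$ onto $\mathcal U$ via $\searrow$ at a vertex of $Z$ are independent operations and hence commute on the nose (a mixed analogue of part~(2) of the associativity Proposition for $\searrow$); the $\Psi$'s in the statement are precisely what absorbs the discrepancy between $(\mathcal T\nearrow\mathcal S)\searrow\mathcal U$ and $\mathcal T\nearrow(\mathcal S\searrow\mathcal U)$ coming from grafting $\mathcal T$ onto $\mathcal S$ rather than onto $\mathcal U$.
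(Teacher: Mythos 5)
Your proof is correct and follows essentially the same route as the paper: expand $\searrow$ and $\nearrow$ vertexwise, cancel the $s\in Y$ (resp.\ $u\in X$) terms using Theorem~\ref{th.} and its Corollary, and identify the surviving sums over vertices of $Z$ via the termwise commutation $\mathcal T\nearrow^s(\mathcal S\searrow_u\mathcal U)=\mathcal S\searrow_u(\mathcal T\nearrow^s\mathcal U)$ for $s,u\in Z$. The only difference is that you spell out and verify this last commutation explicitly (correctly), whereas the paper simply asserts it in the middle of its chain of equalities.
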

	\begin{proof}
	 Let $\mathcal{T}=(X, \leq_{\mathcal{T}})$, $\mathcal{S}=(Y, \leq_{\mathcal{S}})$ and $\mathcal{U}=(Z, \leq_{\mathcal{U}})$ be three finite connected topological spaces, then
	 \begin{align*}
	    \mathcal{T}\nearrow(\mathcal{S}\searrow \mathcal{U})-\Psi_{X,\, Z} \big( (\mathcal{T}\nearrow \mathcal{S})\searrow \mathcal{U} \big)&= \sum_{u\in Z,\, s\in Y\sqcup Z}\mathcal{T}\nearrow^s(\mathcal{S}\searrow_u \mathcal{U})-\sum_{u\in Z,\, s\in Y}\Psi_{X,\, Z} \big( (\mathcal{T}\nearrow^s \mathcal{S})\searrow_u \mathcal{U} \big)\\
	    &=\sum_{u,s\in Z}\mathcal{T}\nearrow^s(\mathcal{S}\searrow_u \mathcal{U})+\sum_{u\in Z,\, s\in Y}\big[\mathcal{T}\nearrow^s(\mathcal{S}\searrow_u \mathcal{U})\\
	    &\hspace*{2,5cm}- \Psi_{X,\, Z} \big( (\mathcal{T}\nearrow^s \mathcal{S})\searrow_u \mathcal{U} \big) \big]\\
	    &=\sum_{u,s\in Z}\mathcal{T}\nearrow^s(\mathcal{S}\searrow_u \mathcal{U})\\
	    &=\sum_{u,s\in Z}\mathcal{S}\searrow_u(\mathcal{T}\nearrow^s \mathcal{U})\\
	    &=\sum_{u,s\in Z}\mathcal{S}\searrow_u(\mathcal{T}\nearrow^s \mathcal{U})+\sum_{s\in Z,\, u\in X}\big[\mathcal{S}\searrow_u(\mathcal{T}\nearrow^s \mathcal{U})\\
	    &\hspace*{2,5cm}- \Psi_{Y,\, Z} \big( (\mathcal{S}\searrow_u \mathcal{T})\nearrow^s \mathcal{U} \big)\big]\\
	    &=\sum_{s\in Z,\, u\in X\sqcup Z}\mathcal{S}\searrow_u(\mathcal{T}\nearrow^s \mathcal{U})-\sum_{u\in X,\, s\in Z}\Psi_{Y,\, Z} \big( (\mathcal{S}\searrow_u \mathcal{T})\nearrow^s \mathcal{U} \big)\\
	    &=\mathcal{S}\searrow(\mathcal{T}\nearrow \mathcal{U})-\Psi_{Y,\, Z} \big( (\mathcal{S}\searrow \mathcal{T})\nearrow \mathcal{U} \big).
	 \end{align*}
	\end{proof}

	%%%%%%%%%%%%%%
	%%%%%%%%%%
\textbf{Acknowledgements}: I would like to thank my Ph.D. thesis supervisors: Professor Dominique Manchon and Professor Ali Baklouti for their advice, guidance and constant support.\\

	\textbf{Funding}: This work was supported by the University of Sfax, Tunisia.
%-----------------------------------------------------------------------------------------
%-----------------------------------------------{chapter}{Bibliography}------------------------------------------

%-----------------------------------------------------------------------------------------
\end{document}
%-----------------------------------------------------------------------------------------

%%%%%%%%%%%-A-%%%%%%%%%%%%%
%%%%%%%%%%%-B-%%%%%%%%%%%%%
%%%%%%%%%%%-C-%%%%%%%%%%%%%
%%%%%%%%%%%-D-%%%%%%%%%%%%%
%%%%%%%%%%%-E-%%%%%%%%%%%%%
%%%%%%%%%%%-F-%%%%%%%%%%%%%
%%%%%%%%%%%-G-%%%%%%%%%%%%%
%%%%%%%%%%%-H-%%%%%%%%%%%%%
%%%%%%%%%%%-I-%%%%%%%%%%%%%
%%%%%%%%%%%-J-%%%%%%%%%%%%%
%%%%%%%%%%%-K-%%%%%%%%%%%%%
%%%%%%%%%%%-L-%%%%%%%%%%%%%
%%%%%%%%%%%-M-%%%%%%%%%%%%%
%%%%%%%%%%%-N-%%%%%%%%%%%%%
%%%%%%%%%%%-O-%%%%%%%%%%%%%
%%%%%%%%%%%-P-%%%%%%%%%%%%%
%%%%%%%%%%%-Q-%%%%%%%%%%%%%
%%%%%%%%%%%-R-%%%%%%%%%%%%%
%%%%%%%%%%%-S-%%%%%%%%%%%%%
%%%%%%%%%%%-T-%%%%%%%%%%%%%
%%%%%%%%%%%-U-%%%%%%%%%%%%%
%%%%%%%%%%%-V-%%%%%%%%%%%%%
%%%%%%%%%%%-W-%%%%%%%%%%%%%
%%%%%%%%%%%-X-%%%%%%%%%%%%%
%%%%%%%%%%%-Y-%%%%%%%%%%%%%
%%%%%%%%%%%-Z-%%%%%%%%%%%%%

%G.A. Khan, Joint maximal numerical range, ICTP preprint IC/90/324 (1990).
%\bibitem{}{:} {,} .
%\bibitem{}{:} {,} .
%\bibitem{}{:} {,} .
%\bibitem{}{:} {,} .
%\bibitem{}{:} {,} .
%\bibitem{}{:} {,} .
%\bibitem{}{:} {,} .
%\bibitem{}{:} {,} .
%\bibitem{}{:} {,} .
%\bibitem{}{:} {,} .
%\bibitem{}{:} {,} .
%\bibitem{}{:} {,} .
%\bibitem{}{:} {,} .
%\bibitem{}{:} {,} .
%\bibitem{}{:} {,} .
%\bibitem{}{:} {,} .

%\begin{pr}
%777 sketch of proof
%\end{pr}